\def\softd{{\leavevmode\setbox1=\hbox{d}%
          \hbox to 1.05\wd1{d\kern-0.4ex{\char039}\hss}}}
\newcommand\eref[1]{(\ref{#1})}
\newcommand{\jph}{{j+\frac{1}{2}}}
\newcommand{\jmh}{{j-\frac{1}{2}}}
\newcommand{\dx}{\Delta x}
\newcommand*\xbar[1]{%
  \hbox{%
    \vbox{%
      \hrule height 0.5pt % The actual bar
      \kern0.4ex%         % Distance between bar and symbol
      \hbox{%
        \kern-0.05em%      % Shortening on the left side
        \ensuremath{#1}%
        \kern-0.00em%      % Shortening on the right side
      }%
    }%
  }%
}
\crefname{hypothesis}{Hypothesis}{Hypotheses}
\title{An Arbitrarily High-Order Fully Well-balanced Hybrid Finite Element--Finite Volume Method for a One-dimensional Blood Flow Model\thanks{Submitted to the editors DATE.
\funding{The work of Y. Liu was supported in part by SNSF grant 200020\_204917, FZEB-0-166980, and UZH Postdoc Grant, 2024 / Verf\"{u}gung Nr. FK-24-110.}}}
\author{Yongle Liu\thanks{Corresponding author. Institute of Mathematics, University of Z\"{u}rich, Winterthurerstrasse 190, 8057 Z\"{u}rich, Switzerland
  (\email{yongle.liu@math.uzh.ch; liuyl2017@mail.sustech.edu.cn}).}
\and Wasilij Barsukow\thanks{Bordeaux Institute of Mathematics, Bordeaux University and CNRS/UMR 5251, Talence, 33405,
France (\email{wasilij.barsukow@math.u-bordeaux.fr}).}}
\newcommand{\vla}[1]{{\color{red}#1}}
\newcommand{\bla}[1]{{\color{blue}#1}}
\begin{document}

\maketitle

% REQUIRED
\begin{abstract} We propose an arbitrarily high-order accurate, fully well-balanced numerical method for the one-dimensional blood flow model. The developed method employs a continuous solution representation, combining conservative and primitive formulations. Degrees of freedom are  point values at cell interfaces and moments of conservative variables within cells. \bla{The well-balanced property---ensuring exact preservation of zero and non-zero velocity steady-state solutions while accurately capturing small perturbations---is achieved through two key components. First, in the evolution of the moments, a local reference steady-state solution is obtained and subtracted. Second, the point value update happens in equilibrium variables. Extensive numerical tests are conducted to validate the preservation of various steady-state solutions, robust capturing small perturbations to such solutions, and high-order accuracy for both smooth and discontinuous solutions.}  
\end{abstract}

% REQUIRED
\begin{keywords}
  Blood flows, Well-balanced schemes, Arbitrarily high-order methods, Moments of conservative variables, Finite element--Finite volume method
\end{keywords}

% REQUIRED
\begin{MSCcodes}
  35L60, 35L65, 65M08, 76M12
\end{MSCcodes}

\section{Introduction}\label{sec1}
One-dimensional (1-D) blood flow models have proven to be valuable tools in order to mathematically understand and numerically address fundamental aspects of pulse wave propagation in the human cardiovascular system; see, e.g., \cite{OA_blood,Avolio,PJ}. \bla{By simplifying the complexity of full three-dimensional models, 1-D models provide a realistic yet computationally efficient alternative for} numerical simulation of blood flow, as shown in \cite{SFPP,SLH}. These models are particularly suited for the description of flow patterns, pressure wave propagation, and average velocities in a single vessel or in networks of vessels \cite{Jordi_2011,Formaggia03,SFPF2, Blanco2014}. \bla{Moreover, the 1-D models still retain the fluid-structure interaction in a simpler framework, enabling the computation of cross-sectional area, cross-sectional averages of velocity, flow and pressure at any time and position along the length of the vessel. This simplicity does not compromise their effectiveness, as demonstrated in various studies including the in-silico analysis, where the predictions of these models were compared with those of more complex models \cite{Grinberg2011,Xiao_2014,Boileau_2015}, in-vitro by assessing 1-D models output with respect to highly controlled experiments \cite{Matthys_2007,Bessems_2008}, and in-vivo by assessing the capacity of these models to reproduce pressure and flow waveforms observed in the clinical context \cite{OPKPNL,Reymond_2009}.} Another key advantage of 1-D models is their lower computational cost, which allows for the study of wave effects in isolated arterial segments or systemic arterial systems (i.e. in the aorta and systemic arteries) \cite{OPKPNL,Pontrelli,Pontrelli2,SFPF2}. \bla{Lastly}, 1-D models can be easily coupled with lumped-parameter models \cite{QTV,SLH} and three-dimensional fluid-structure models \cite{FGNQ0,FGNQ,QMV,BF13,BAUF}, making them a powerful tool for constructing comprehensive models of the human cardiovascular system.  

A well-established partial differential equation (PDE) model for the 1-D blood flow through thin-walled deformable elastic tubes takes the form (see, e.g., \cite{Toro13, GBLT}, the review paper \cite{Toro16} and references therein):  
\begin{equation}\label{1.1}
  \left\{\begin{aligned}
  &A_t+Q_x=0,\\
  &Q_t+\Big(\widehat\alpha\frac{Q^2}{A}\Big)_x+\frac{A}{\rho}p_x=0,%\vla{f},
  \end{aligned}\right.
\end{equation}
where $A(x,t)=\pi R^2(x,t)$ is the cross-sectional area of the vessel with $R(x,t)>0$ being the radius and $Q(x,t)=A(x,t)u(x,t)$ is the \vla{flow} with $u(x,t)$ being the averaged velocity of blood at a cross section. $\rho>0$ represents the fluid density which is assumed to be constant and $p(x,t)$ denotes the averaged internal pressure which will be given later to close the description of the system \eref{1.1}. The parameter $\widehat \alpha$ is the momentum-flux correction coefficient that depends on the assumed velocity profile. For simplicity, in this paper we take $\widehat{\alpha}\equiv1$, which corresponds to a blunt velocity profile. %\vla{The source term $f(x,t)$ represents the friction force per unit length and is modeled by
%\begin{equation*}
%  f(x,t)=-\gamma(x)u(x,t),
%\end{equation*}
%where the viscous resistance $\gamma(x)\geq0$ is a prescribed function.}

Note that there are more unknowns than equations in \eref{1.1} and hence a closure condition linking the pressure with the displacement of the vessel will be needed. \vla{In the present work, we adopt the tube law of the form (\cite{PMTP,Toro13})
\begin{equation}\label{1.1b}
  p(x,t)=\mathcal{K}(x)\phi\Big(\frac{A(x,t)}{A_0(x,t)}\Big)+p_{\rm{ext}}(x),
\end{equation}
where $p_{\rm ext}(x)$ is the external pressure, $A_0(x)=\pi (R_0(x))^2$ is the cross-section area in an unloaded configuration (when $p=p_{\rm ext}$) with $R_0(x)$ being its radius. $\mathcal{K}(x)$ is the so-called stiffness coefficient and is a known positive function of the vessel wall Young modulus, the wall thickness and $A_0(x)$. $\phi(a)$ is the transmural pressure, assumed of the form
\begin{equation}\label{phiamn}
 \phi(a)=a^m-a^n.
\end{equation}
Here, $m>0$ and $n\in(-2,0]$ are real numbers and in this work we consider the typical values for collapsible tubes. Namely, $m=\frac{1}{2}, n=0$ for arteries and $m=10, n=-\frac{3}{2}$ for veins. Substituting the tube law \eref{1.1b} into \eref{1.1}, introducing the following two functions
\begin{equation}\label{Phiamn}
  \Phi(a)=\frac{1}{m+1}a^{m+1}-\frac{1}{n+1}a^{n+1},\quad \widetilde{\Phi}(a)=\frac{m}{m+1}a^{m+1}-\frac{n}{n+1}a^{n+1},
\end{equation}
and employing some straightforward manipulations,} we can rewrite the 1-D blood flow model \eref{1.1} in the form of hyperbolic balance laws: 
\vla{
\begin{equation}\label{AQ}
 \begin{aligned}
  &A_t+Q_x=0,\\
  &Q_t+\Big(\frac{Q^2}{A}+\frac{\mathcal{K}A_0}{\rho}\widetilde{\Phi}\big(\frac{A}{A_0}\big)\Big)_x=
  -\frac{A_0}{\rho}\Phi\big(\frac{A}{A_0}\big)\mathcal{K}_x
  +\frac{\mathcal{K}}{\rho}\widetilde{\Phi}(\frac{A}{A_0})(A_0)_x
  -\frac{A}{\rho}(p_{\rm ext})_x.%-\gamma\frac{Q}{A}.
  \end{aligned}
\end{equation}}

%From
%mechanical consideration, in the present work, a simple tube law describing the elastic behaviour of the arterial wall is given by
%\begin{equation}\label{1.2}
%  p(x,t)=p_{\rm ext}+\frac{\kappa}{\sqrt{\pi}}\Big(\sqrt{A(x,t)}-\sqrt{A_0(x)}\Big),
%\end{equation}
%where $p_{\rm ext}$ is the external pressure and assumed to be constant, the constant parameter $\kappa$ stands for arterial stiffness, and $A_0(x)=\pi (R_0(x))^2$ is the cross-section at rest (when $u=0$) with $R_0(x)$ being its radius. Using the usual tube law \eref{1.2}, we can rewrite the 1-D blood flow model \eref{1.1} in the form of hyperbolic balance laws: 
%\begin{equation}\label{1.3}
%  \left\{\begin{aligned}
%  &A_t+Q_x=0,\\
%  &Q_t+\Big(\frac{Q^2}{A}+\frac{\beta}{3}A^{\frac{3}{2}}\Big)_x=\beta A(\sqrt{A_0})_x,%-\alpha\frac{Q}{A},
%  \end{aligned}\right.
%\end{equation}
%where $\beta=\frac{\kappa}{\rho\sqrt{\pi}}$. For details, we refer to the review paper \cite{Toro16} and references therein. In Figure \ref{blood_artery}, we present a 1-D blood flow model diagram with the cross-sectional radius $(R)$, the radius at rest $(R_0)$, and velocity $(u)$. 

%\begin{figure}[ht!]
%\centerline{\includegraphics[trim=0.01cm 0.01cm 0.01cm 0.01cm,clip,width=8.5cm]{}}
%\caption{Diagram (also see \cite[Figure 1]{BX_blood}) of the 1-D blood flow model with the cross-sectional radius ($R$), the radius at rest ($R_0$), and velocity $u$.\label{blood_artery}}
%\end{figure}

\vla{As one can see, in the system of equations \eref{AQ} expressed by conservative variables, i.e. cross-sectional vessel area $A(x,t)$ and flow $Q(x,t)$, the} momentum balance equation includes source terms that depend on the spatial derivatives of tube law parameters. Mathematically, the blood flow system \eref{AQ}, therefore, belongs to the family of hyperbolic balance laws and admits non-trivial steady-state solutions, which often respect a delicate balance between the flux and the source terms at PDE level. \vla{Standard methods treat the flux derivative and the source differently, and setups that are stationary for the PDE are not stationary in the simulation. This error is reduced upon refining the grid, but often extreme refinement is needed until spurious waves are significantly smaller than the physical waves that one wants to resolve in a near-equilibrium setup. The increase in computational cost is usually unaffordable. Well-balanced methods solve the issue without increasing the computational effort. These methods are specifically designed to preserve the discrete version of steady-state solutions exactly, i.e. to machine accuracy and to capture effectively the nearly steady-state flows on relatively coarse meshes. Numerical comparisons between the well-balanced and standard non well-balanced methods will be conducted in \cref{sec4}, from which we further demonstrate the advantages of the development of well-balanced methods.}

%Various WB numerical methods have been developed for the 1-D shallow water equations; see, e.g., \cite{CPMP07,CLPwb13,CV12,CK16,CCHKW_18,CCKW,KKLZ,NXS,Xing14,CKLX}. These methods usually involve either a non-trivial root-finding mechanism based on energy balance, or significant effort in local reconstruction of the conservative variables or complex reconstruction of the geometric source terms. In a recent work \cite{LCJKY}, the authors proposed a partial relaxation scheme to preserve moving-water equilibria of the shallow water equations, which avoids the aforementioned complexities at least for flows that are in the subcritical regime. It is worth mentioning that a very recent study \cite{AL_SW} developed a new approach for designing a fully WB scheme of the shallow water equations that can overcome the complexities mentioned above for any flow. The key idea of this new approach is to work with both the conservative and primitive formulations of the studied PDE and develop a combined method in a WB manner.

The literature on well-balanced schemes for modeling 1-D blood flow is extensive, with numerous methods having been thoroughly studied over the past decade. For instance, in \cite{DL}, WB first- and second-order finite volume schemes were developed to exactly preserve the zero-velocity steady-state solutions in arteries. In \cite{MPT}, an Arbitrary high-order DERivatives (ADER) finite volume framework was utilized to construct a WB numerical scheme for 1-D blood flow in elastic vessels with varying mechanical properties. In \cite{WLD}, high-order WB finite difference weighted essentially non-oscillatory (WENO) schemes for the blood flow model in arteries were derived based on a specific splitting of the source term into two parts, which were then discretized separately with compatible WENO operators. Additionally, in \cite{LDY}, the hydrostatic reconstruction was used to construct high-order discontinuous Galerkin (DG) and finite volume WENO schemes, which can exactly preserve the zero-velocity steady states in arteries. However, the above-mentioned works only aimed at maintaining the simple zero velocity steady-state solutions. \vla{As explained in \cite{MPT,GDFL}, there is a need to preserve a broader class of steady-state solutions. Zero velocity steady-state solutions are not relevant in medical applications. As a first step towards numerical methods more adapted to practically relevant regimes greater attention should be given to more general non-zero velocity steady-state solutions, which represent ``living-man'' conditions.} Consequently, there has been a growing interest in developing fully WB methods that can preserve both the zero velocity (``blood-at-rest'') and non-zero velocity (``moving-blood'') steady-state solutions; see the definition in Section \ref{sec31}. In \cite{MG15}, the authors introduced an upwind discretization of the source term to create an energy-balanced numerical solver for the blood flow in artery. Likewise, in \cite{GBLT}, a positivity-preserving fully WB scheme was proposed for the 1-D blood flow model with friction. The authors in \cite{BX_blood} proposed a fully WB DG scheme by decomposing the numerical solutions into the steady-state and fluctuation parts for the blood flow in artery. In \cite{CK23_blood}, the authors developed a fully WB central-upwind scheme for the 1-D blood flow in artery based on the flux globalization technique. In \cite{PMTP}, based on a combination of the generalized hydrostatic reconstruction and well-balanced reconstruction operators, the authors proposed high-order fully well-balanced numerical methods for the 1-D blood flow model with discontinuous mechanical and geometrical properties. There are many related works, we have only named a few here.

\bla{In addition to well-balancedness, high-order accurate numerical schemes have a small numerical error at modest grid resolutions already. Examples include finite difference, finite volume (FV), and discontinuous Galerkin (DG) methods. High-order well-balanced FV or DG methods specifically designed for blood flow are discussed in some of the aforementioned works; see, e.g., \cite{LDY,MPT,BX_blood,PMTP}. Finite difference and finite volume methods typically improve accuracy by extending the computational stencil and using higher-order reconstructions, such as Essentially Non-Oscillatory (ENO) and Weighted ENO (WENO) schemes. In contrast, DG methods achieve high accuracy by increasing the number of degrees of freedom (DoFs) within each computational cell. However, these methods also have certain limitations. For instance, the large computational stencil in WENO schemes can complicate parallelization and the implementation of boundary conditions, while the DG method is often memory-intensive. The method proposed here has the advantages of DG methods, but due to shared degrees freedom requires less memory and less computational cost.}

The aim of this paper is to develop an arbitrarily high-order, fully WB, conservative, and positivity-preserving numerical method for the 1-D blood flow model that can handle both the ``blood-at-rest'' and general ``moving-blood'' steady states, which are defined in Section \ref{sec31}. To achieve this goal, we extend the idea introduced in \cite{Abgrall_camc,AL_SW} \bla{and use point values at cell interfaces along with higher moments within each cell as additional DoFs to enhance the accuracy. The higher moments are evolved by a ``moments system'' built upon the balance laws \eref{AQ}, while the point values are updated using} the following formulation of the PDE:
\begin{equation}\label{1.5}
  \left\{\begin{aligned}
  &A_t+(Au)_x=0,\\
  &\vla{u_t+\left(\frac{u^2}{2}+\frac{1}{\rho}\Big(\mathcal{K}\phi\big(\frac{A}{A_0}\big)+p_{\rm ext}\Big)\right)_x=0.}%-\gamma\frac{u}{A}}.
  \end{aligned}\right.
\end{equation}
\bla{This distinguishes the proposed method from the traditional methods, where rely solely on the cell averages and hyperbolic equations \eref{AQ}.}
\vla{In \cite{AQAU}, \eref{AQ} is referred to as the $(A,Q)$ system, while \eref{1.5} is called the $(A,u)$ system. The former models the physically conserved variables of mass and flow, while there is a priori no physical conservation principle for the velocity. In this work, we refer to these two systems therefore as the conservative and primitive formulations, respectively, even though the primitive formulation for this particular system happens to lead to a conservation law. } 

The developed numerical methods are summarized as follows. We first assume that the numerical solution is globally continuous and can be described by a combination of point values at cell interfaces and moments of the solution, inspired by the Active Flux scheme and in particular the approach introduced in \cite{AB_FE_FV,AB_HOAF}. The point values are expressed in primitive variables $(A,u)$ and are evolved according to the primitive formulation \eref{1.5}, while the moments are related to the conservative variables $(A, Q)$ and their evolution is governed by a ``moments system'' obtained by integrating the conservative formulation \eref{AQ} over the cell. It is important to note that this is made possible by two sources of inspiration. First, the point values of primitive variables can always be transformed from the conservative variables at a specific point. Second, the cell averages are the lowest order moments and are updated conservatively. It has been proved that this new class of scheme combining formulations \eref{AQ} and \eref{1.5} satisfies a Lax-Wendroff-like theorem and guarantees convergence to a weak solution of the PDE \cite{Abgrall_camc}.  

Next, we discretise the ``moments system'' associated with \eref{AQ} and the primitive formulation \eref{1.5} simultaneously using the Active Flux scheme, similar to the new WB version proposed in \cite{AL_SW}, in space. Once we obtain the semi-discrete forms, we apply the standard Runge--Kutta method to update the moments and point values. The updates of the point values and moments must ensure that the resulting method is WB. To achieve this, we use the Gauss--Lobatto point values of the equilibrium variables (defined below in \eref{2.3a}) to approximate the spatial derivatives present in \eref{1.5} and also utilize the local reference steady-state solutions to derive a WB and exact conservation property enhanced quadrature of the source term in \eref{AQ}. Moreover, the usage of moments of the solution gives us as many DoFs as we require, enabling us to construct an arbitrarily high-order method. 

Finally, the proposed method should preserve the positivity of both the point values and cell averages of cross-sectional area $A(x,t)>0$ for $t>0$. To accomplish this, we adopt a simple multi-dimensional optimal order detection (MOOD) paradigm from \cite{CDL,Vilar,AL_SW} equipped with a first-order scheme that has a Local Lax--Friedrichs' flavour for the updates of average and point values. It is worth noting that if only the zeroth moment (cell average) and point values are taken into account, the proposed method is similar to active flux methods \cite{ER_AF1,HKS,Barsukow_AF,ER_AF2,BB} and the recently proposed method in \cite{AL_SW}.

\vla{The novelty of this work is summarized as follows: 
\begin{itemize}
       \item Free of special design for WB numerical fluxes. Traditional FV or DG methods approximate the solution using piecewise polynomials. WB schemes based on these methods often require specific techniques for computing numerical fluxes, such as the (generalized) hydrostatic reconstruction applied in \cite{PMTP,CK23_blood,BX_blood,MPT,Muller_2013HO,LDY}. In contrast, the proposed method assumes a globally continuous variable representation of the solution. This allows us to directly use integration by parts to compute the continuous flux without requiring special WB designs for the numerical fluxes.
           
       \item Arbitrarily high-order. The methods in \cite{AL_SW,BB} achieve third-order accuracy by using two endpoint values and one cell average as DoFs per cell. In this work, we add higher moments as additional DoFs to achieve arbitrarily high-order accuracy. This approach outperforms WENO methods by maintaining a compact stencil, avoiding the larger computational stencil required by WENO. Due to shared degrees of freedom at cell interfaces, it is also more economical than DG methods. Moreover, we improve the source term approximation in \cite{AL_SW}, which is done by an extrapolation technique and limited to even order accuracy as well as challenging to extend when using Gauss--Lobatto quadrature points.

       \item Exact conservation property for subsystems of conservation laws. When the mechanical and geometric parameters ($\mathcal{K}$, $A_0$, and $p_{\rm ext}$) in \eref{AQ} become constants, the source term disappears, reducing the system to a conservation law. For such subsystems, it is essential to design a conservative numerical scheme that not only respects physical laws but also ensures the numerical approximation converges to a weak solution, as guaranteed by the Lax--Wendroff theorem. To address this, a cut-off function is used in \cite{AL_SW} to switch off or limit the non-hydrostatic correction terms (numerical viscosity). This approach is reasonable, as solutions to conservation laws are quite often far from steady states, rendering the non-hydrostatic correction terms unnecessary. However, designing a cut-off function for high-order schemes presents significant challenges. In this work, we take an alternative approach to approximate the source term without compromising the exact conservation property of the scheme while ensuing the well-balanced property. %The key lies in the flux-gradient reformulation of the source term in local reference steady-state solutions. These local solutions given in $A$ and $Q$ are determined by the local equilibrium variables defined in \cref{sec3}. While similar ideas have been applied to 1-D shallow water equations in \cite{Xu2024, Xu2024a}, defining the local reference steady-state solutions for 1-D blood flow presents significant challenges due to the greater number of spatially varying parameters and the increased complexity of the source terms. %While similar ideas have been applied to 1-D blood flow in \cite{BX_blood}, their WB DG method required the local steady-state solutions to be given in priori. 

       %\new{ANOTHER COMMENT: I am not sure whether the last two bullet points qulaify as "novelty". Can we just keep the three bullets above?\yl{I prefer keeping these two items}}
       
       \item Positivity preservation and oscillation-control near strong discontinuities. It is known that high-order schemes are prone to numerical oscillations near discontinuities, potentially leading to non-physical solutions (e.g., negative cross-sectional area in blood flow). This may cause nonlinear instability or even code crashes. To address this, we have used a posteriori MOOD paradigm (\cite{CDL,Vilar,AL_SW}) to both the point and average values of the discrete solution. When the high-order solution fails to meet the MOOD criteria, the scheme is downgraded to a lower-order one, and the solution is recomputed in the affected cells. The lowest-order scheme employed is the Local Lax--Friedrichs scheme, which satisfies all MOOD criteria.
       
       \item Comprehensive numerical validation. The proposed method is rigorously validated through a series of 1-D numerical experiments. These include tests of WB property, positivity preservation, and high-order accuracy. Examples contain cases with simplified arterial tube law as well as more complex scenarios involving arteries and veins with space-varying mechanical and geometric parameters. The results demonstrate the accuracy and robustness of the proposed method.
           
     \end{itemize}
}

The rest of the paper is structured as follows. In \cref{sec2}, we provide some notations, introduce the DoFs required for the proposed method, and describe the corresponding polynomial approximation space. In \cref{sec3}, we begin by investigating the steady-state solutions of the 1-D blood flow system. Next, we introduce the WB interpolation of the equilibrium variables, which plays a crucial role in designing the proposed fully WB numerical methods. After that, we present a WB update for the ``moment system'' associated with \eref{AQ}, as well as a WB update for \eref{1.5}. The presentation of numerical results demonstrating the high-order accuracy (with a focus on third-, fourth-, and fifth-order), fully WB property, and the ability to provide good resolution for both smooth and discontinuous solutions is provided in \cref{sec4}.

\section{Preliminaries}\label{sec2}
In this section, we introduce some notations for the sake of simplicity and to define the DoFs, as well as the corresponding interpolation space. These will be used to construct arbitrarily high-order methods.

\subsection{Notations}\label{sec21}
To begin, we rewrite the conservative PDE model \eref{AQ} in the convenient vector form:
\begin{equation}\label{2.1}
  \bm U_t+\bm F(\bm U)_x=\bm S(\bm U, x),
\end{equation}
where
\begin{equation}\label{2.2}
  \bm U=\begin{pmatrix}A\\Q\end{pmatrix}\quad\mbox{and}\quad \bm F(\bm U)=\begin{pmatrix}Q\\\frac{Q^2}{A}+\vla{\frac{\mathcal{K}A_0}{\rho}\widetilde{\Phi}\big(\frac{A}{A_0}\big)}\end{pmatrix}
\end{equation}
\vla{are the conservative variables and flux functions, respectively. The source term is given by $\bm S(\bm U, x)=(0,S^{(2)})^T$, where
\begin{equation*}
   S^{(2)}=-\frac{A_0}{\rho}\Phi\big(\frac{A}{A_0}\big)\mathcal{K}_x
  +\frac{\mathcal{K}}{\rho}\widetilde{\Phi}\big(\frac{A}{A_0}\big)(A_0)_x-\frac{A}{\rho}(p_{\rm ext})_x.%-\gamma\frac{Q}{A}.
\end{equation*}
The flux Jacobian matrix $J_{\bm U}=\frac{\partial \bm F}{\partial \bm U}$ is expressed as
\begin{equation*}
 J_{\bm U}=\begin{pmatrix}0 & 1\\-u^2+\frac{\mathcal{K}}{\rho}\widetilde{\Phi}'\big(\frac{A}{A_0}\big)& 2u\end{pmatrix}.
\end{equation*}
The eigenvalues of $J_{\bm U}$ are
\begin{equation}\label{evalue}
\lambda_1 = u + \sqrt{\frac{\mathcal{K}}{\rho} \frac{A}{A_0} \phi'\big(\frac{A}{A_0}\big)}, \quad
\lambda_2 = u - \sqrt{\frac{\mathcal{K}}{\rho} \frac{A}{A_0} \phi'\big(\frac{A}{A_0}\big)}.
\end{equation}
For the parameter ranges previously introduced, the system \eref{AQ} is hyperbolic since the matrix is diagonalizable. The flow regime is characterized by the Shapiro number, defined as:
\begin{equation*}
  S_h=\frac{\vert u\vert}{c}, \quad c=\sqrt{\frac{\mathcal{K}}{\rho}\frac{A}{A_0}{\phi}'\big(\frac{A}{A_0}\big)}.
\end{equation*}
A flow state is classified as subcritical if $S_h<1$, critical if $S_h=1$, and supercritical if $S_h>1$. As noted in \cite{GDFL}, under physiological conditions, blood flow is almost always subcritical. However, very specific pathologies may lead to supercritical flows. In the numerical experiments conducted in \cref{sec4}, only subcritical solutions will be considered. Nonetheless, the methods developed in this study are not limited to subcritical flows.}

In addition to the vector form of the conservative formulation, we also rewrite the primitive formulation \eref{1.5} in vector form:
\begin{equation}\label{2.3}
  \bm V_t+\bm E_x=0,
\end{equation}
where
\begin{equation}\label{2.3a}
  \bm V=\begin{pmatrix}A\\u\end{pmatrix},\quad \bm E:=\begin{pmatrix}Q\\E\end{pmatrix}=\begin{pmatrix}Au\\ \vla{\frac{u^2}{2}+\frac{1}{\rho}\Big(\mathcal{K}\phi\big(\frac{A}{A_0}\big)+p_{\rm ext}\Big)}\end{pmatrix} %+\mathcal{F}
\end{equation}
are the primitive and introduced equilibrium variables, respectively. %\vla{In \eref{2.3a}, $\mathcal{F}$ is a global quantity and defined as
%%\begin{equation}\label{2.3b}
%  \mathcal{F}:=\int_{\widehat{x}}^{x}\gamma(\eta)\frac{u(\eta)}{A(\eta)}{\rm d}\eta
%\end{equation}
%with $\widehat{x}$ being an arbitrary number.} 
We note that the conservative variables $\bm U\in\mathcal{D}_{\bm U}$, where
\begin{equation*}
 \mathcal{D}_{\bm U}=\{\bm U=(A,Q)^\top\in\mathbb{R}^2: A>0\}. 
\end{equation*}
It is easy to find a mapping $\Psi$, which is one-to-one and $C^1$ as well as invertible, so that the primitive variables $\bm V$ can be transformed from the conserved one. Namely, $\bm V=\Psi(\bm U)\in \mathcal{D}_{\bm V}$, where
\begin{equation*}
\mathcal{D}_{\bm V}=\{\bm V=(A,u)^\top\in\mathbb{R}^2: A>0\}.
\end{equation*}

Finally, to describe the numerical methods, we divide the computational domain into a sequence of finite-volume cells denoted by $K_j:=[x_\jmh,x_\jph]$, where $j=1,\cdots,N$, with $N$ being the total number of finite volume cells. Each cell has a uniform size $\dx=x_\jph-x_\jmh$, with the center located at $x_j=(x_\jmh+x_\jph)/2$, and the cell interfaces at $x_\jmh$ and $x_\jph$.

\subsection{Degrees of freedom and interpolation polynomial space}
In this subsection, we declare higher moments to be new DoFs and construct the high-order interpolation polynomial space, inspired by finite element methods. 

Following the approach in \cite[Sections 2 and 3]{AB_FE_FV}, we first consider a basis $(b_\ell)_{\ell\geq0}$ and $b_\ell: [-\frac{\dx}{2},\frac{\dx}{2}]\rightarrow\mathbb{R}$ of some linear space $W\subset C^1$ of differentiable functions. We multiply both sides of \eref{2.1} by $b_\ell(x-x_j)$ and integrate by parts over the cell $K_j$ to obtain
\begin{equation}\label{2.5}
\begin{aligned}
  &\frac{{\rm d}}{{\rm d}t}\int_{K_j}{\bm U}(x)b_\ell(x-x_j)\,{\rm d}x
  +\bm F\big(\bm U(x_{\jph})\big)b_\ell\Big(\frac{\dx}{2}\Big)-\bm F\big(\bm U(x_\jmh)\big)b_\ell\Big(-\frac{\dx}{2}\Big)\\
  &\quad-\int_{K_j}\bm F(\bm U(x))\partial_xb_\ell(x-x_j)\,{\rm d}x=\int_{K_j}\bm S(\bm U, x)b_\ell(x-x_j)\,{\rm d}x
  \end{aligned}
\end{equation}
component-wise. In \eref{2.5}, all of the indexed quantities are time-dependent, but from here on, we omit this dependence for the sake of brevity. In DG methods, the expansion coefficients of components of $\bm U$ with respect to some basis are considered as DoFs, and the flux $\bm F\big(\bm U(x_\jph)\big)$ is replaced by some numerical flux obtained from the solution of a Riemann problem at $x=x_\jph$. Instead, in the proposed new method, we consider the $\ell$-th moments
\begin{equation}\label{2.6}
  \bm U^{(\ell)}_j:=C_\ell\int_{K_j}{\bm U}(x)b_\ell(x-x_j)\,{\rm d}x,\quad \ell\geq0,
\end{equation}
as the new DoFs and $C_\ell$ is a normalization constant that we will determine later. Multiplying both sides of \eref{2.5} by $C_\ell$ and using the definition of moments in \eref{2.6}, we can rewrite \eref{2.5} as
\begin{equation}\label{2.7}
\begin{aligned}
  &\frac{{\rm d}\bm U^{(\ell)}_j}{{\rm d}t}+C_\ell\Big[\bm{F}_\jph b_\ell\Big(\frac{\dx}{2}\Big)-\bm{F}_\jmh b_\ell\Big(-\frac{\dx}{2}\Big)\Big]\\
  &\quad-C_\ell\int_{K_j}\bm F(\bm U(x))\partial_xb_\ell(x-x_j)\,{\rm d}x=C_\ell\int_{K_j}\bm S(\bm U, x)b_\ell(x-x_j)\,{\rm d}x,
  \end{aligned}
\end{equation}
where $\bm{F}_\jph=\bm F\big(\bm U_\jph\big)$. Note that the proposed method is like an Active Flux method which maintains continuity across cell interface and introduces new independent point values $\bm U_\jph=\Psi^{-1}(\bm V_\jph)$ as additional DoFs, where $\bm V_\jph\approx\bm V(x_\jph)$ is governed by \eref{2.3}. 
 
Next, we give the definition of the finite element approximation.
\begin{defn}[\cite{AB_FE_FV}]
Let $(b_\ell)_{\ell\geq0}$ be a basis of some linear space $W\subset C^1$ of functions, and let $(C_\ell)_{\ell\geq0}$ be a sequence of non-zero real numbers. We consider the finite element triple $(K,V,\Sigma)$, where:
\begin{itemize}
  \item $K$ is the interval $[-\frac{\dx}{2},\frac{\dx}{2}]$;
  \item $V$ is the space of real-valued polynomials on $K$ of degree at most $r\geq2$, denoted by $\mathbb{P}^r$;
  \item $\Sigma\subset V'$, the dual space of $V$, consisting of degrees of freedom spanned by the set $\big\{\sigma_{-\frac{1}{2}},\sigma_{\frac{1}{2}},\sigma_0,\sigma_1,\cdots,\sigma_{r-2}\big\}$,
where $\sigma_{-\frac{1}{2}}$, $\sigma_{\frac{1}{2}}$, and $\sigma_\ell$ are defined as
\begin{equation}\label{2.8}
\sigma_{\pm\frac{1}{2}}(v):=v\Big(\pm\frac{\dx}{2}\Big),\quad \sigma_\ell(v):=C_\ell\int_{-\frac{\dx}{2}}^{\frac{\dx}{2}} b_\ell(x)v(x)\,{\rm d}x,  
\end{equation}
for all $\ell=0,1,\cdots,r-2$ and any $v\in V$.
\end{itemize}
\end{defn}

Then, we equip the space $V$ of shape functions with a basis
\begin{equation*}
  \big\{B_{-\frac{1}{2}},B_{\frac{1}{2}},B_0,B_1,\cdots,B_{r-2}\big\},
\end{equation*}
satisfying the following conditions:
\begin{equation}\label{2.8a}
\sigma_\ell(B_s)=\delta_{\ell s},\quad \forall \ell, s\in\Big\{-\frac{1}{2},\frac{1}{2},0,1,\cdots,r-2\Big\},
\end{equation}
where $\delta$ is the Kronecker function.
Note that we will use the same names to refer to the extensions of elements of $\Sigma$ to $L^1(K)$.  Using this notation, the interpolation operator $I:L^1(K)\rightarrow V$ is defined as
\begin{equation}\label{2.8b}
  I(v)(x):=\!\!\!\!\!\!\!\!\! \sum_{\ell\in\{-\frac{1}{2},\frac{1}{2},0,1,\ldots,r-2\}} \!\!\!\!\!\!\!\!\! \sigma_{\ell}(v)B_{\ell}(x),\quad \forall x\in K.
\end{equation}
Similarly, we define the reconstruction operator $R:\mathbb{R}^r\rightarrow V$ as 
\begin{equation}\label{2.8c}
  R\big(a_{-\frac{1}{2}},a_{\frac{1}{2}},a_0,a_1,\ldots,a_{r-2}\big)(x):=\!\!\!\!\!\!\!\!\!\sum_{\ell\in\{-\frac{1}{2},\frac{1}{2},0,1,\ldots,r-2\}} \!\!\!\!\!\!\!\!\! a_{\ell}B_{\ell}(x),\quad \forall x\in K.
\end{equation}

Finally, we give a set of monomial basis functions $(b_\ell)_{\ell\geq0}$, the corresponding normalization constant $C_\ell$, and the shape functions. 
\begin{defn}[\cite{AB_FE_FV,AB_HOAF}]\label{def1}
Let $W=\bigcup_{r\in\mathbb{Z}}\mathbb{P}^{r}$ be the union of polynomial spaces, with $(b_\ell)=(1,x,x^2,\ldots)$ its monomial basis. We define the normalization constant as 
\begin{equation}\label{Al}
  C_\ell:=\frac{(\ell+1)2^{\ell}}{\dx^{\ell+1}}=\left(\frac{1}{\dx},\frac{4}{\dx^2},\frac{12}{\dx^3},\frac{32}{\dx^4},\frac{80}{\dx^5},\ldots\right),
\end{equation}
such that $\sigma_\ell(1)=1$, $\forall \ell\in2\mathbb{Z}$. We can then define the following shape functions (with $\xi:=(x-x_j)/\dx\in[-\frac{1}{2},\frac{1}{2}]$) for third-, fourth-, and fifth-order methods:
\begin{itemize}
  \item when $r=2$, a parabolic polynomial space is considered, and the corresponding basis that satisfies \eref{2.8}--\eref{2.8a} is given by
  \begin{equation}\label{2.8d}
    \left\{\begin{aligned}
    & B_{-\frac{1}{2}}=\frac{1}{4}(2\xi-1)(1+6\xi),\\
    & B_0=-\frac{3}{2}(2\xi-1)(1+2\xi),\\
    & B_\frac{1}{2}=\frac{1}{4}(1+2\xi)(6\xi-1);
    \end{aligned}\right.
  \end{equation}
  \item when $r=3$, a cubic polynomial space is considered, and the corresponding basis that satisfies \eref{2.8}--\eref{2.8a} is given by:
  \begin{equation}\label{2.8e}
    \left\{\begin{aligned}
    & B_{-\frac{1}{2}}=-\frac{1}{4}(2\xi-1)(-1+4\xi+20\xi^2),\\
    & B_0=-\frac{3}{2}(2\xi-1)(1+2\xi),\\
    & B_1=-\frac{15}{2}\xi(2\xi-1)(1+2\xi),\\
    & B_\frac{1}{2}=\frac{1}{4}(1+2\xi)(-1-4\xi+20\xi^2);
    \end{aligned}\right.
  \end{equation}
  \item when $r=4$, a quartic polynomial space is considered, and the corresponding basis that satisfies \eref{2.8}--\eref{2.8a} is given by:
    \begin{equation}\label{2.8f}
    \left\{\begin{aligned}
    & B_{-\frac{1}{2}}=\frac{1}{16}(2\xi-1)(-3-30\xi+60\xi^2+280\xi^3),\\
    & B_0=\frac{15}{16}(2\xi-1)(1+2\xi)(-3+28\xi^2),\\
    & B_1=-\frac{15}{2}\xi(2\xi-1)(1+2\xi),\\
    & B_2=-\frac{35}{16}(2\xi-1)(1+2\xi)(20\xi^2-1),\\
    & B_\frac{1}{2}=\frac{1}{16}(1+2\xi)(3-30\xi-60\xi^2+280\xi^3).
    \end{aligned}\right.
  \end{equation}
\end{itemize}
\end{defn}

\begin{rmk}
One can construct arbitrarily higher order basis functions that satisfy the interpolation conditions \eref{2.8}--\eref{2.8a} and these can be found in \cite[Section 3]{AB_FE_FV}. However, our focus in this paper, as mentioned in Section \ref{sec1}, is on numerical methods of third-, fourth-, and fifth-order accuracy. Thus, we limit our consideration to basis functions of degree $r=2$, $r=3$, and $r=4$.
\end{rmk}
\section{An arbitrarily high-order fully well-balanced method}\label{sec3}
In this section, we present a novel arbitrarily high-order, fully WB active flux method. This method can accurately preserve both the simpler ``blood-at-rest'' steady states and the more complex general ``moving-blood'' steady states of the 1-D blood flow model. Our approach is built upon the two different forms of the studied PDE model, namely \eref{2.1}--\eref{2.2} and \eref{2.3}--\eref{2.3a}. 

\subsection{Steady-state solutions}\label{sec31}
Before introducing the numerical method, we analyze the steady-state solutions of interest \cite{BX_blood,CK23_blood,PMTP}. By definition, a smooth steady-state solution satisfies the time-independent system $\bm F(\bm U)_x=\bm S(\bm U, x)$, i.e., 
\begin{equation}\label{s0}
  \left\{\begin{aligned}
  &Q_x=0,\\
  &\vla{\Big(\frac{Q^2}{A}+\frac{\mathcal{K}A_0}{\rho}\widetilde{\Phi}\big(\frac{A}{A_0}\big)\Big)_x=-\frac{A_0}{\rho}\Phi\big(\frac{A}{A_0}\big)\mathcal{K}_x
  +\frac{\mathcal{K}}{\rho}\widetilde{\Phi}\big(\frac{A}{A_0}\big)(A_0)_x-\frac{A}{\rho}(p_{\rm ext})_x.}%-\gamma\frac{Q}{A}}.
  \end{aligned}\right.
\end{equation} 
After some simple algebraic manipulations or directly observed from \eref{2.3a}, we obtain the general steady-state solutions with non-zero velocity, which are referred to as ``moving-blood'' steady-state solutions:
\begin{equation}\label{s1}
  Q=Au\equiv{\rm Const}, \quad E=\vla{\frac{u^2}{2}+\frac{1}{\rho}\Big(\mathcal{K}\phi\big(\frac{A}{A_0}\big)+p_{\rm ext}\Big)}\equiv \rm Const.  %+\mathcal{F}
\end{equation}
%where
%\begin{equation}\label{s2}
%  \mathscr{F}:=\int_{\widehat{x}}^{x}\alpha(\zeta) \frac{Q(\zeta,t)}{\big(A(\zeta,t)\big)^2} {\rm d}\zeta
%\end{equation}
%with $\widehat{x}$ being an arbitrary number.

If $u\equiv0$, the general steady-state solutions \eref{s1} simplify to the ``blood-at-rest'' steady-state solutions in the literature:
\begin{equation}\label{s3}
  u(x)\equiv0,\quad E=\vla{\frac{1}{\rho}\Big(\mathcal{K}\phi\big(\frac{A}{A_0}\big)+p_{\rm ext}\Big)}=\rm Const.
\end{equation}
%A special case of these steady-state solutions occurs when the pressure in \eref{1.2} is zero, indicating that $p_{\rm ext}=0$ and $A(x,t)$ reduces to $A_0(x)$:
%\begin{equation}\label{s4}
%  u(x)\equiv0,\quad A(x)=A_0(x).
%\end{equation}

The remainder of this section is dedicated to introducing the arbitrarily high-order fully WB numerical method, which is capable of exactly preserving the steady states mentioned above: \eref{s1} and \eref{s3}.%, and \eref{s4}. 

\subsection{WB interpolation of the equilibrium variable}\label{sec23}
We assume that at a certain time $t\geq0$, the $\ell$-th moments ${\bm U}^{(\ell)}_j$ defined in \eref{2.6} and the point values $\bm V_{j\pm\frac{1}{2}}\approx\bm V(x_{j\pm\frac{1}{2}})$ are available. We can immediately obtain $\bm U_{j\pm\frac{1}{2}}:=\Psi^{-1}(\bm V_{j\pm\frac{1}{2}})$ thanks to the invertibility of $\Psi$. Therefore, for a $(r+1)$-th order numerical method, in each cell $K_j$, we have $\ell$-th moments $\{{\bm U}^{(\ell)}_j\}_{\ell=0}^{r-2}$ and two point values placed at cell interfaces and shared by adjacent cells (see Figure \ref{U_recon} for $r=4$). From the Figure, we can observe that every cell has access to $r+1$ pieces of interpolation information. Hence, in every cell, we can use \eref{2.8c} to construct a $r$-th degree polynomial interpolant $\widetilde{\bm U}(x)$:
\begin{equation}\label{2.9}
  \widetilde{\bm U}(x)=\bm U_\jmh B_{-\frac{1}{2}}(\xi)+\bm U_\jph B_{\frac{1}{2}}(\xi)+\sum_{\ell=0}^{r-2}{\bm U}_j^{(\ell)}B_{\ell}(\xi),\quad \xi=\frac{x-x_j}{\dx},\quad\forall x\in K_j,
\end{equation}
where the basis functions are given in \eref{2.8d}, \eref{2.8e}, and \eref{2.8f}, with respect to $r=2$, $r=3$, and $r=4$.
\begin{figure}[ht!]
\centerline{\includegraphics[trim=0.01cm 0.01cm 0.01cm 0.01cm,clip,width=10.5cm]{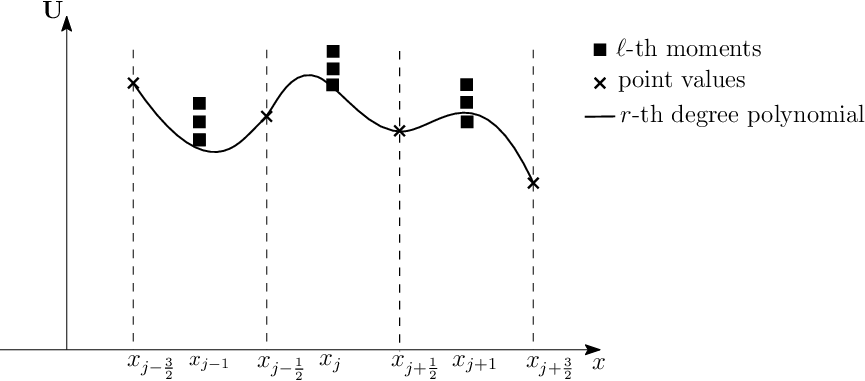}}
\caption{Five degrees of freedom and a unique continuous polynomial interpolant.\label{U_recon}}
\end{figure}

In order to make the developed method fully WB, it is well-known that one has to perform interpolation on the equilibrium variable $\bm E=(Q, E)^\top$; see, e.g., \cite{KLX,CK23_blood,CKLX,XS14,AL_SW,Xu2024}. To this end, we first compute from $\widetilde{\bm U}(x)$ the point values of $\bm E$ at nodes (in $[-\frac{1}{2},\frac{1}{2}]$) and then use them to interpolate $\bm E$ to the relevant order of accuracy. \vla{To compute $\bm E_{k}=(Q_{k},E_{k})$ at the Gauss--Lobatto points $\{X_k\}_{k=1}^{r+1}\in[-\frac{1}{2},\frac{1}{2}]$ inside the cell $K_j$, we first calculate the point values $\bm U_k=(A_k,Q_k)$ using \eref{2.9}:
\begin{equation}\label{Um}
  \bm U_k\approx\widetilde{\bm U}(X_k)=\bm U_\jmh B_{-\frac{1}{2}}(X_k)+\bm U_\jph B_{\frac{1}{2}}(X_k)+\sum_{\ell=0}^{r-2}{\bm U}_j^{(\ell)}B_{\ell}(X_k),
\end{equation}
which yields $A_k$ and $Q_k$. Next, using the formula in \eref{2.3a}, $E_k$ is computed as
\begin{equation}\label{Em}
  E_k=\frac{1}{2}\Big(\frac{Q_k}{A_k}\Big)^2+\frac{1}{\rho}\Bigg(\mathcal{K}_k\phi\Big(\frac{A_k}{(A_0)_k}\Big)+(p_{\rm ext})_k\Bigg),%+\mathcal{F}_m,
\end{equation}
where the functions $\mathcal{K}(x)$, $A_0(x)$, and $p_{\rm ext}(x)$ have been projected onto the same polynomial space as $\bm U$. Thus, the point values $\mathcal{K}_k$, $(A_0)_k$, and $(p_{\rm ext})_k$ are computed in the same manner as in \eref{Um}, namely,
\begin{equation}\label{KA0pm}
  \begin{aligned}
   &\mathcal{K}_k=\mathcal{K}_\jmh B_{-\frac{1}{2}}(X_k)+\mathcal{K}_\jph B_{\frac{1}{2}}(X_k)+\sum_{\ell=0}^{r-2}\mathcal{K}_j^{(\ell)}B_{\ell}(X_k),\\
   &(A_0)_k=(A_0)_\jmh B_{-\frac{1}{2}}(X_k)+(A_0)_\jph B_{\frac{1}{2}}(X_k)+\sum_{\ell=0}^{r-2}(A_0)_j^{(\ell)}B_{\ell}(X_k),\\
   &(p_{\rm ext})_k=(p_{\rm ext})_\jmh B_{-\frac{1}{2}}(X_k)+(p_{\rm ext})_\jph B_{\frac{1}{2}}(X_k)+\sum_{\ell=0}^{r-2}(p_{\rm ext})_j^{(\ell)}B_{\ell}(X_k).
  \end{aligned}
\end{equation}}

The interpolant functions of $\bm E$ are
\begin{itemize}
  \item third-order scheme:
  \begin{equation}\label{2.10}
  R_{\bm E}=\bm E_\jmh L_{-\frac{1}{2}}+\bm E_j L_0+\bm E_\jph L_{\frac{1}{2}}
\end{equation}
with
\begin{equation*}
  L_{-\frac{1}{2}}(\xi)=\xi(2\xi-1),\quad L_0(\xi)=(1+2\xi)(1-2\xi),\quad L_{\frac{1}{2}}(\xi)=\xi(2\xi+1);
\end{equation*}
  \item fourth-order scheme:
  \begin{equation}\label{2.10a}
  R_{\bm E}=\bm E_\jmh L_{-\frac{1}{2}}+\bm E_{j-\sqrt{\frac{1}{20}}} L_{-\sqrt{\frac{1}{20}}}+\bm E_{j+\sqrt{\frac{1}{20}}}L_{\sqrt{\frac{1}{20}}}+\bm E_\jph L_{\frac{1}{2}}
\end{equation}
with
\begin{equation*}
\left\{\begin{aligned}
&L_{-\frac{1}{2}}(\xi)=-5\xi^3+\frac{5}{2}\xi^2+\frac{1}{4}\xi-\frac{1}{8},\\
&L_{-\sqrt{\frac{1}{20}}}(\xi)=5\sqrt{5}\xi^3-\frac{5}{2}\xi^2-\frac{5\sqrt{5}}{4}\xi+\frac{5}{8},\\
&L_{\sqrt{\frac{1}{20}}}(\xi)=-5\sqrt{5}\xi^3-\frac{5}{2}\xi^2+\frac{5\sqrt{5}}{4}\xi+\frac{5}{8},\\
&L_{\frac{1}{2}}(\xi)=5\xi^3+\frac{5}{2}\xi^2-\frac{1}{4}\xi-\frac{1}{8};
  \end{aligned}\right.
\end{equation*}
  \item fifth-order scheme:
  \begin{equation}\label{2.11}
  R_{\bm E}=\bm E_\jmh L_{-\frac{1}{2}}+\bm E_{j-\sqrt{\frac{3}{28}}} L_{-\sqrt{\frac{3}{28}}}+\bm E_j L_0+\bm E_{j+\sqrt{\frac{3}{28}}}L_{\sqrt{\frac{3}{28}}}+\bm E_\jph L_{\frac{1}{2}},
\end{equation}
with
\begin{equation*}
\left\{\begin{aligned}
&L_{-\frac{1}{2}}(\xi)=\xi\left(\frac{3}{4}-\frac{3}{2}\xi-7\xi^2+14\xi^3\right),\\
&L_{-\sqrt{\frac{3}{28}}}(\xi)=\frac{7\xi}{3}\left(-\frac{\sqrt{21}}{4}+\frac{7}{2}\xi+\sqrt{21}\xi^2-14\xi^3\right),\\
&L_0(\xi)=1-\frac{40}{3}\xi^2+\frac{112}{3}\xi^4,\\
&L_{\sqrt{\frac{3}{28}}}(\xi)=\frac{7\xi}{3}\left(\frac{\sqrt{21}}{4}+\frac{7}{2}\xi-\sqrt{21}\xi^2-14\xi^3\right),\\
&L_{\frac{1}{2}}(\xi)=\xi\left(-\frac{3}{4}-\frac{3}{2}\xi+7\xi^2+14\xi^3\right).
%&L_{-\frac{1}{2}}(\xi)=\frac{\xi}{3}(1-2\xi-16\xi^2+32\xi^3),\\
%&L_{-\frac{1}{4}}(\xi)=\frac{8\xi}{3}(-1+4\xi+4\xi^2-16\xi^3),\\
%&L_0(\xi)=1-20\xi^2+64\xi^4,\\
%&L_{\frac{1}{4}}(\xi)=\frac{8\xi}{3}(1+4\xi-4\xi^2-16\xi^3),\\
%&L_{\frac{1}{2}}(\xi)=\frac{\xi}{3}(-1-2\xi+16\xi^2+32\xi^3).
  \end{aligned}\right.
\end{equation*}
\end{itemize}
%In \eref{2.10}, \eref{2.10a}, and \eref{2.11}, \vla{completing the interpolant functions requires computing the point values of $\bm E$ at the Gauss--Lobatto points inside each cell $K_j$. This is achieved by using the formula for the equilibrium variables $\bm E$ provided in \eref{2.3a} and the relevant point values of $\bm U$, which are obtained from the polynomial interpolant \eref{2.9}. For example, 
%The remaining part is to evaluate the global quantity $\mathcal{F}_m$ in \eref{Em} for the case with nonzero viscous resistance $\gamma(x)$. This can be done in a similar recursive way as used in \cite[section 2]{AL_SW}. To this end, we first simply take $\widehat{x}=x_{\frac{1}{2}}$ in \eref{2.3b} and this yields
%\begin{equation*}
% \mathcal{F}_{\frac{1}{2}}\stackrel{\eref{2.3b}}{=}\int_{\widehat{x}}^{x_{\frac{1}{2}}} \gamma(\eta)\frac{u(\eta)}{A(\eta)}{\rm d}\eta=0
%\end{equation*}
%we then proceed in case by case:
%\begin{itemize}
%   \item when $r=2$, third-order scheme is used, we need to compute $\mathcal{F}_\jmh$, $\mathcal{F}_j$, and $\mathcal{F}_\jph$ at the three Gauss-Lobatto points. By recursively computing, we have
%       \begin{equation}\label{F31}
%         \mathcal{F}_\jph=\mathcal{F}_\jmh+\int_{x_\jmh}^{x_\jph} \gamma(\eta)\frac{u(\eta)}{A(\eta)}{\rm d}\eta
%       \end{equation}
%   \item when $r=3$, fourth-order scheme is used, we need to compute $\mathcal{F}_\jmh$, $\mathcal{F}_j$, and $\mathcal{F}_\jph$.
%   \item when $r=4$, fifth-order scheme is used, we need to compute $\mathcal{F}_\jmh$, $\mathcal{F}_j$, and $\mathcal{F}_\jph$.
%\end{itemize}
 %} 

\subsection{WB update of moments}\label{sec24}
In this subsection, we describe how to update the $\ell$-th moment over time in an arbitrarily high-order WB manner. This involves solving the semi-discretization \eref{2.7}, i.e.,
\begin{equation}\label{2.13}
\frac{{\rm d}\bm U^{(\ell)}_j}{{\rm d}t}+C_\ell\Big[\bm{F}_\jph b_\ell\big(\frac{\dx}{2}\big)-\bm{F}_\jmh b_\ell\big(-\frac{\dx}{2}\big)\Big]={\bm F}^{(\ell)}_j+{\bm S}^{(\ell)}_j,\quad \ell=0,1,\cdots,r-2,
\end{equation}
where
\begin{equation}\label{2.14}
  \bm{F}_\jph=\bm F(\bm U_\jph)\stackrel{\eref{2.2}}{=}\begin{pmatrix}Q_\jph\\ \frac{Q_\jph^2}{A_\jph}+\vla{\frac{\mathcal{K}_\jph(A_0)_\jph}{\rho}\widetilde{\Phi}\big(\frac{A_\jph}{(A_0)_\jph}\big)}\end{pmatrix},
\end{equation}
${\bm F}^{(\ell)}_j$ and ${\bm S}^{(\ell)}_j$ are the bulk terms involving the flux function and the source term, defined as:
\begin{equation}\label{2.15a}
  {\bm F}^{(\ell)}_j:= C_\ell\int\limits_{K_j}\bm F(\bm U)\partial_xb_\ell(x-x_j)\,{\rm d}x
\end{equation}
and
\begin{equation}\label{2.15}
  {\bm S}^{(\ell)}_j:= C_\ell\int\limits_{K_j}\bm S(\bm U, x)b_\ell(x-x_j)\,{\rm d}x,
\end{equation}
respectively.

\vla{In order to construct fully WB schemes, it is also necessary to design appropriate WB approximations of \eref{2.15a} and \eref{2.15} such that the source term approximations of ${\bm F}^{(\ell)}_j+{\bm S}^{(\ell)}_j$ exactly balance the flux in \eref{2.14} at steady state. Furthermore, when the source term vanishes, the PDE reduces to a hyperbolic conservation law. In such cases, it is crucial to ensure that the numerical approximation of \eref{2.15} maintains exact conservation, or at the very least, consistency. Conservation is critical for updating the zero-th moment, $\bm U_j^{(0)}$, which represents the cell average in FV methods. This requirement is not only to respect physical laws but also to guarantee convergence to a weak solution of the hyperbolic equation, as ensured by the Lax--Wendroff theorem. To satisfy both the WB and exact conservation properties, we reformulate the source term ${\bm S}^{(\ell)}_j$ as
\begin{equation}\label{sue}
  {\bm S}^{(\ell)}_j=C_\ell\int\limits_{K_j}\big(\bm S(\bm U, x)-\bm S(\widehat{\bm U}, x)\big)b_\ell(x-x_j)\,{\rm d}x+C_\ell\int\limits_{K_j}\bm F(\widehat{\bm U})_xb_\ell(x-x_j)\,{\rm d}x,
\end{equation}
where $\widehat{\bm U}$ is a local reference steady-state solution satisfying $\bm F(\widehat{\bm U})_x=\bm S(\widehat{\bm U}, x)$ within each cell $K_j$. The definition of $\widehat{\bm U}$ is provided below. Applying integration by parts to the second term in \eref{sue} yields
\begin{equation}\label{sue2}
\begin{aligned}
  C_\ell\int\limits_{K_j}\bm F(\widehat{\bm U})_xb_\ell(x-x_j)\,{\rm d}x&=C_\ell\Big[\widehat{\bm{F}}_\jph b_\ell\big(\frac{\dx}{2}\big)-\widehat{\bm{F}}_\jmh b_\ell\big(-\frac{\dx}{2}\big)\Big]\\
   &\quad -C_\ell\int\limits_{K_j}\bm F(\widehat{\bm U})\partial_xb_\ell(x-x_j)\,{\rm d}x,
  \end{aligned}
\end{equation}
where $\widehat{\bm F}_\jph=\bm F(\widehat{\bm U}_\jph)$ is the flux evaluated at $x_\jph$ within $K_j$. Combining \eref{2.15a}, \eref{sue}, and \eref{sue2}, the source terms in \eref{2.13} become
\begin{equation}\label{newsource}
\begin{aligned}
  {\bm F}^{(\ell)}_j+{\bm S}^{(\ell)}_j&=C_\ell\Big[\widehat{\bm{F}}_\jph b_\ell\big(\frac{\dx}{2}\big)-\widehat{\bm{F}}_\jmh b_\ell\big(-\frac{\dx}{2}\big)\Big]\\
  &\quad +C_\ell\int\limits_{K_j}\big(\bm F(\bm U)-\bm F(\widehat{\bm U})\big)\partial_xb_\ell(x-x_j)\,{\rm d}x\\
  &\quad +C_\ell\int\limits_{K_j}\big(\bm S(\bm U, x)-\bm S(\widehat{\bm U}, x)\big)b_\ell(x-x_j)\,{\rm d}x.
  \end{aligned}
\end{equation}
The integrals in \eref{newsource} are computed using high-order Gauss--Lobatto quadrature.} %In \cref{appa}, we provide each component of the right-hand-side of \eref{2.13}.}

\vla{The next step is to define the local reference steady-state solution $\widehat{\bm U}$ at the $r+1$ Gauss--Lobatto points. Specifically, $\bm \widehat{\bm U}_k\approx\widehat{\bm U}(X_k)$ for $k=1,\cdots, r+1$, where $X_k\in K_j$ is the $k$-th Gauss--Lobatto point. Recall that in \eref{Um}--\eref{Em}, values $\{(Q_k,E_k)\}_{k=1}^{r+1}$ of the equilibrium variables have already been computed at Gauss-Lobatto points in each cell.} Following the idea presented in \cite{Xu2024,Xu2024a}, we first chose from those local reference equilibrium values $\bm E^{e}=(Q^e, E^e)$ for each cell $K_j$:
\begin{equation}\label{2.15e}
 \vla{ {Q}^{e}=Q_{\iota},\quad {E}^{e}=E_{\iota},}
\end{equation}
\vla{where $1\leq\iota\leq r+1$. $\iota$ is chosen such that they satisfy the following conditions for all $k\in\{1,2,\cdots,r+1\}\backslash\{\iota\}$:
\begin{equation}\label{FF'}
\digamma'(A^*; Q_\iota, E_\iota, \mathcal{K}_k, (A_0)_k)=0,\quad
\digamma(A^*, Q_\iota, E_\iota,\mathcal{K}_k, (A_0)_k,(p_{\rm ext})_k)\leq0.
\end{equation}
Here, 
\begin{equation}\label{Fa}
 \digamma(A):=\digamma(A; Q, E, \mathcal{K}, A_0, p_{\rm ext})=\frac{Q^2}{2A^2}+\frac{\mathcal{K}}{\rho}\Big[\big(\frac{A}{A_0}\big)^{m}-\big(\frac{A}{A_0}\big)^{n}\Big]+\frac{p_{\rm ext}}{\rho}-E,
\end{equation}
and its first derivative with respect to $A$ is 
\begin{equation}\label{Fa'}
  \digamma'(A)=-\frac{Q^2}{A^3}+\frac{\mathcal{K}}{\rho A_0}\Big[m\big(\frac{A}{A_0}\big)^{m-1}-n\big(\frac{A}{A_0}\big)^{n-1}\Big].
\end{equation} 

\vla{The reasoning behind thse conditions is explained in Remark \ref{rmk:iota} below. A pseudocode implementation for determining $\iota$ is provided in \cref{alg:findl}.
\begin{algorithm}
\color{red}
\caption{\vla{Determine $\iota$ using \eref{FF'}}}
\label{alg:findl}
\begin{algorithmic}[1]
\WHILE{$1\leq\iota\leq r+1$}
\STATE{Set flag=.FALSE.}
\FORALL{$1\leq k\leq r+1$ and $k\neq\iota$}
\STATE{Solve $\digamma'(A^*; Q_\iota, E_\iota, \mathcal{K}_k, (A_0)_k)=0$ for $A^*$. }
%\IF{The unique root $A^*\geq0$ exit}
%\STATE{
\STATE{Evaluate $\digamma(A^*, Q_\iota, E_\iota,\mathcal{K}_k, (A_0)_k,(p_{\rm ext})_k)$ using \eref{Fa}.}
\IF {$\digamma(A^*, Q_\iota, E_\iota,\mathcal{K}_k, (A_0)_k,(p_{\rm ext})_k)\leq0$}
\STATE{Set flag=.TRUE.}
\STATE{$k=k+1$}
\ELSE
\STATE{Set flag=.FALSE.}
\STATE{Exit loop on $k$ and continue loop on $\iota$}
\ENDIF%}
%\ELSE
%\STATE{flag=.FALSE.}
%\STATE{Exit loop on $k$ and continue loop on $\iota$}
%\ENDIF
\ENDFOR
\IF{flag==.TRUE.}
\RETURN{$\iota$}
\STATE{Exit loop on $\iota$}
\ENDIF
\ENDWHILE
%\RETURN 
\end{algorithmic}
\end{algorithm}  }

Once the index $\iota$ is determined, i.e., $\bm E^e=(Q^e, E^e)$ is selected, we} set \vla{$\widehat{Q}_k={Q}^{e}$} and use Newton--Raphson method to solve the following nonlinear equations, derived from \eref{2.3a}, to compute \vla{$\widehat{A}_k$}: 
\begin{equation}\label{2.15bb}
  \vla{E^{e}=\frac{(Q^e)^2}{2(\widehat{A}_k)^2}+\frac{1}{\rho}\Big(\mathcal{K}_k\phi\big(\frac{\widehat{A}_k}{(A_0)_k}\big)+(p_{\rm ext})_k\Big)},\quad k=1,\cdots, r+1,
\end{equation}
where \vla{$\mathcal{K}_k$, $(A_0)_k$, and $(p_{\rm ext})_k$ are defined in \eref{KA0pm}. In rare cases where no $\iota$ satisfies \eref{FF'}, we set $\widehat{A}_k=0$ and $\widehat{Q}_k=0$ for all $k$. These cases may only occur when the flow is far from steady state. }

\vla{
\begin{rmk}\label{rmk:iota}
We illustrate the rationale behind the conditions in \eref{FF'} by examining the properties of the functions $\digamma(A)$ in \eref{Fa} and $\digamma'(A)$ in \eref{Fa'}, under the parameter ranges ($\mathcal{K}(x)>0$, $\rho>0$, $m>0$, and $n\in(-2,0]$). First, we consider $\digamma'(A)$ and the condition $\digamma'(A)=0$ implies
\begin{equation*}
  \digamma'(A)=0\Leftrightarrow \frac{\mathcal{K}}{\rho}\Big[m\frac{A^{m+2}}{(A_0)^m}-n\frac{A^{n+1}}{(A_0)^n}\Big]-Q^2=0.
\end{equation*}

Next, we let 
\begin{equation*}
  g(A)=\frac{\mathcal{K}}{\rho}\Big[m\frac{A^{m+2}}{(A_0)^m}-n\frac{A^{n+2}}{(A_0)^n}\Big]-Q^2,
\end{equation*}
and note that this function satisfies:
\begin{equation*}
  \lim_{A\rightarrow0^+}g(A)=-Q^2\leq0,\quad \lim_{A\rightarrow+\infty}g(A)=+\infty,
\end{equation*}
and
\begin{equation*}
  g'(A)=\frac{\mathcal{K}}{\rho}\Big[m(m+2)\frac{A^{m+1}}{(A_0)^m}-n(n+2)\frac{A^{n+1}}{(A_0)^n}\Big]\geq0.
\end{equation*}
Thus, $g(A)$ is increasing in $A$, ensuring a unique root in $[0,+\infty)$. This also guarantees the existence and uniqueness of the root $A^*\geq0$ of $\digamma'(A)$, which can be efficiently computed using the Newton--Raphson method.

Finally, we analyze the properties of $\digamma(A)$:
\begin{itemize}
\item Case $Q=0$: from \eref{Fa'}, we know that 
\begin{equation*}
  \digamma'(A)=\frac{\mathcal{K}}{\rho A_0}\Big[m\big(\frac{A}{A_0}\big)^{m-1}-n\big(\frac{A}{A_0}\big)^{n-1}\Big]\geq0, \quad \forall A\geq0,
\end{equation*}
which implies that $\digamma(A)$ is increasing in $A$. Furthermore, 
    \begin{equation*}
      \lim\limits_{A\rightarrow+\infty}\digamma(A)=+\infty,\quad \digamma'(A^*=0)=0.
    \end{equation*}
    Therefore, $\digamma(A)$ has a unique root in $(0,+\infty)$ if $\digamma(A^*)\leq0$; see Figure \ref{Phi}--(a) for an illustration.
\item Case $Q\neq0$: from \eref{Fa}, we know that
      \begin{equation*}
        \lim\limits_{A\rightarrow0^+}\digamma(A)=+\infty,\quad \lim\limits_{A\rightarrow+\infty}\digamma(A)=+\infty.
       \end{equation*}
From \eref{Fa'}, we observe that
\begin{equation*}
  \lim\limits_{A\rightarrow0^+}\digamma'(A)=-\infty,\quad \lim\limits_{A\rightarrow+\infty}\digamma'(A)=+\infty.
\end{equation*}
Thus, $\digamma(A)$ attains a minimum at $A=A^*$. It has a root in $(0,+\infty)$ if $\digamma(A^*)\leq0$. This behaviour is illustrated in Figure \ref{Phi}--(b).
\end{itemize}

\begin{figure}[ht!]
\centerline{\subfigure[$\digamma(A)$ with $Q=0$]{\includegraphics[trim=0.005cm 0.25cm 0.05cm 0.02cm,clip,width=4.5cm]{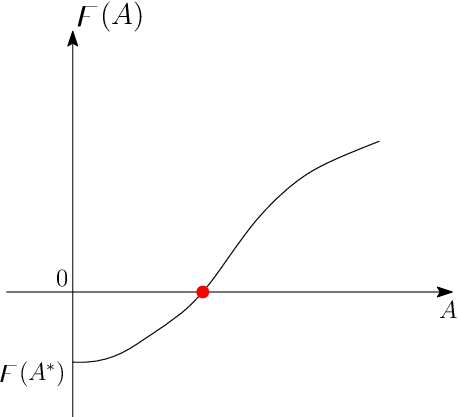}}\hspace*{2.0cm}
\subfigure[$\digamma(A)$ with $Q\neq0$]{\includegraphics[trim=0.005cm 0.25cm 0.05cm 0.02cm,clip,width=4.5cm]{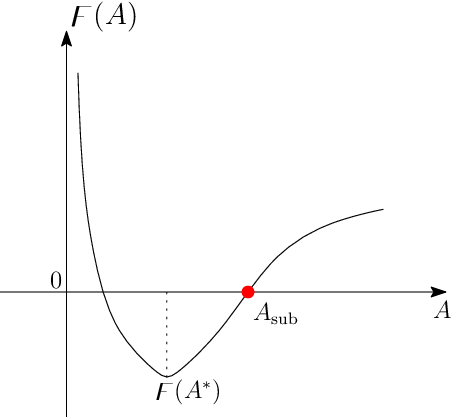}}}
\caption{\sf Sketch of function $\digamma(A)$.\label{Phi}}
\end{figure}

\end{rmk}
}

%Since the blood flow is always in the subcritical regime, we can easily use Newton's solver to solve the nonlinear equation \eref{2.15bb} and select the subcritical root.

%\begin{rmk}
%We would like to point out that the nonlinear equation \eref{2.15bb} is always solvable. To demonstrate this, we consider the function $\Phi(A)=\frac{Q^2}{2A^2}+\beta(\sqrt{A}-\sqrt{A_0})-E$. This function has an inflection point at $\breve{A}=\big(\frac{3Q^2}{\beta}\big)^{\frac{2}{5}}$ and attains its minimum value at $A^*=\big(\frac{2Q^2}{\beta}\big)^{\frac{2}{5}}$; see Figure \ref{Phi}.

%Given that $\Phi(A)=0$ is always solvable on the all $(r+1)$ Gauss--Lobatto points (thanks to the definition of equilibrium variable $\bm E$ in \eref{2.3a}), it follows that $\Phi(A^*)\leq0$ for these points. Consequently, based on the choice in \eref{2.15e}, the condition $\Phi(A^*)\leq0$ is satisfied, ensuring the local reference equilibrium state is always accessible.

%\begin{figure}[ht!]
%\centerline{\includegraphics[trim=0.5cm 0.25cm 0.05cm 0.02cm,clip,width=4.5cm]{Phi}}
%\caption{\sf Sketch of function $\Phi(A)$.\label{Phi}}
%\end{figure}
%\end{rmk}

\subsection{WB update of point values}\label{sec35}
In this subsection, we follow the approach outlined in \cite{AL_SW} and solve the system \eref{2.3} to evolve the point value $\bm V_\jph$ in time. The semi-discrete form is given by
\begin{equation}\label{2.17}
  \frac{\rm d}{{\rm d}t}\,{\bm V}_\jph=-(\overleftarrow{\Phi}_{j+1}^{\bm V}+\overrightarrow{\Phi}_j^{\bm V}),
\end{equation}
where $\overleftarrow{\Phi}_{j+1}^{\bm V}+\overrightarrow{\Phi}_j^{\bm V}$ is a consistent approximation of $\frac{\partial \bm E}{\partial x}(x_\jph)$ and defined as
\begin{equation}\label{2.18}
\overleftarrow{\Phi}_{j+1}^{\bm V}=(\widetilde J(\bm V_\jph))^-\delta_\jph^-{\bm E},\quad \overrightarrow{\Phi}_j^{\bm V}=(\widetilde J(\bm V_\jph))^+\delta_\jph^+{\bm E},
\end{equation}
where $\widetilde J(\bm V_j)^\pm$ is the ``sign'' of the Jacobian defined as follows:
\begin{equation}\label{J_sign}
\widetilde J(\bm V_\jph)^\pm=\Upsilon_\jph\begin{pmatrix}\frac{\lambda_1^\pm}{\lambda_1}&0\\0&\frac{\lambda_2^\pm}{\lambda_2}\end{pmatrix}\Upsilon_\jph^{-1}
  :=\Upsilon_\jph\widetilde{\Lambda}^\pm\Upsilon_\jph^{-1},
\end{equation}
with $\lambda_1=u_\jph-\sqrt{\frac{\beta}{2}}(A_\jph)^{\frac{1}{4}}$, $\lambda_2=u_\jph+\sqrt{\frac{\beta}{2}}(A_\jph)^{\frac{1}{4}}$, and the corresponding eigenvector matrix   
\begin{equation}\label{R}
  \Upsilon_\jph=\left(
      \begin{array}{cc}
       -\vla{\sqrt{\frac{\rho AA_0}{\mathcal{K}\phi'\big(\frac{A}{A_0}\big)}}} & \vla{\sqrt{\frac{\rho AA_0}{\mathcal{K}\phi'\big(\frac{A}{A_0}\big)}}} \\
         1 & 1 
      \end{array}
    \right).
\end{equation}
In order to complete the computation in \eref{2.18}, we still need to compute the left- and right-biased finite difference (FD) approximations $\delta_\jph^\pm{\bm E}$. This can be achieved by taking the derivative of $R_{\bm E}$ in either \eref{2.10}, \eref{2.10a}, or \eref{2.11} at $x_\jph$ from the left- and right-side cells. Consequently, we obtain 
\begin{equation}\label{dE3}
\begin{aligned}
  &\delta_\jph^+ \bm E=\frac{1}{\dx}\big(\bm E_\jmh-4\bm E_j+3\bm E_\jph\big),\\ &\delta_\jph^- \bm E=\frac{1}{\dx}\big(-3\bm E_\jph+4\bm E_{j+1}-\bm E_{j+\frac{3}{2}}\big)
  \end{aligned}
\end{equation}
for the third-order scheme, 
  \begin{equation}\label{dE4}
  \begin{aligned}
   &\delta_\jph^+ \bm E=\frac{1}{\dx}\Big(-\bm E_\jmh+c_1\bm E_{j-\sqrt{\frac{1}{20}}}-c_2\bm E_{j+\sqrt{\frac{1}{20}}}+6\bm E_\jph\Big),\\
   &\delta_\jph^- \bm E=\frac{1}{\dx}\Big(-6\bm E_\jph+c_2\bm E_{j+1-\sqrt{\frac{1}{20}}}-c_1\bm E_{j+1+\sqrt{\frac{1}{20}}}+\bm E_{j+\frac{3}{2}}\Big),\\
   &\quad c_1=\frac{5(\sqrt{5}-1)}{2},\quad c_2=\frac{5(\sqrt{5}+1)}{2},
  \end{aligned}
\end{equation}
for the fourth-order scheme, and 
\begin{equation}\label{dE5}
\begin{aligned}
  &\delta_\jph^+ \bm E=\frac{1}{\dx}\Big(\bm E_\jmh+c_3\bm E_{j-\sqrt{\frac{3}{28}}}+\frac{16}{3}\bm E_j-c_4\bm E_{j+\sqrt{\frac{3}{28}}}+10\bm E_\jph\Big),\\
   &\delta_\jph^- \bm E=\frac{1}{\dx}\Big(-10\bm E_\jph+c_4\bm E_{j+1-\sqrt{\frac{3}{28}}}-\frac{16}{3}\bm E_{j+1}-c_3\bm E_{j+1+\sqrt{\frac{3}{28}}}-\bm E_{j+\frac{3}{2}}\Big),\\
   &\quad c_3=\frac{7(\sqrt{21}-7)}{6},\quad c_4=\frac{7(\sqrt{21}+7)}{6},
  \end{aligned}
\end{equation}
for the fifth-order scheme.

\vla{In what follows, we present a pseudocode algorithm for the one time step evolution of the proposed arbitrarily high-order WB scheme; see \cref{alg:HOWB}.}
\begin{algorithm}
\color{red}
\caption{\vla{One time step evolution of high-order WB scheme}}
\label{alg:HOWB}
\renewcommand{\algorithmicrequire}{\textbf{Input:}}
\renewcommand{\algorithmicensure}{\textbf{Output:}}
\begin{algorithmic}[1]
\REQUIRE{$\bm U_j^{(\ell)}$, $\bm V_{j\pm\frac{1}{2}}$ at time $t=t^n$ and spatially varying parameters $\mathcal{K}_j^{(\ell)}$, $(A_0)_j^{(\ell)}$, $(p_{\rm ext})_j^{(\ell)}$, $\mathcal{K}_{j\pm\frac{1}{2}}$, $(A_0)_{j\pm\frac{1}{2}}$, $(p_{\rm ext})_{j\pm\frac{1}{2}}$.}
\STATE{Compute $\bm U_{j\pm\frac{1}{2}}=\Psi^{-1}(\bm V_{j\pm\frac{1}{2}})$.}
\STATE{Construct the polynomial space $\widetilde{\bm U}$ using \eref{2.9}.}
\STATE{Compute $\bm U_k$ at the $r+1$ Gauss--Lobatto points using \eref{Um}.}
\STATE{Compute $\mathcal{K}_k$, $(A_0)_k$, and $(p_{\rm ext})_k$ using \eref{KA0pm}.}
\STATE{Compute $\bm E_k$ using \eref{Em} and \eref{KA0pm}.}
\STATE{Compute the right-hand-side terms of the semi-discrete ODEs \eref{2.13} and \eref{2.17}:
\begin{itemize}
  \item Compute $\bm F_{j\pm\frac{1}{2}}$ using \eref{2.14} and evaluate $\bm F_j^{(\ell)}+\bm S_j^{(\ell)}$ using \cref{alg:findl} and \eref{newsource}.
  \item Compute $\overleftarrow{\Phi}_{j+1}^{\bm V}+\overrightarrow{\Phi}_j^{\bm V}$ using \eref{2.18}, \eref{J_sign}, \eref{R}, and one of \eref{dE3}, or \eref{dE4}, or \eref{dE5}.
\end{itemize}
}
\STATE{Solve \eref{2.13} and \eref{2.17} simultaneously to update $\bm U_j^{(\ell)}$ and $\bm V_{j\pm\frac{1}{2}}$.}
\ENSURE{$\bm U_j^{(\ell)}$ and $\bm V_{j\pm\frac{1}{2}}$ at the new time level $t^{n+1}$.}
\end{algorithmic}
\end{algorithm}

\subsection{Well-balanced \vla{and exact conservation properties}}
In this subsection, we show the WB \vla{and exact conservation properties} of the proposed high-order scheme by proving the following propositions.
\begin{proposition}
In each cell $K_j$, consider the numerical initial data given by the moments 
\begin{equation*}
  \begin{aligned}
  A^{(\ell)}_j&\stackrel{\eref{2.6}}{=} C_\ell\int_{K_j}A(x)b_\ell(x-x_j){\rm d}x\approx C_\ell\sum_{k=1}^{r+1}\omega_kA_kb_\ell(X_k-x_j),\\
   Q^{(\ell)}_j&\stackrel{\eref{2.6}}{=} C_\ell\int_{K_j}Q(x)b_\ell(x-x_j){\rm d}x\approx C_\ell\sum_{k=1}^{r+1}\omega_kQ_kb_\ell(X_k-x_j),
  \end{aligned}\quad \ell\geq0,
\end{equation*}
and the point values
\begin{equation*}
  \begin{aligned}
 &A_\jmh=A_1,\quad A_\jph=A_{r+1},\\
 &u_\jmh=\frac{Q_1}{A_1},\quad u_\jph=\frac{Q_{r+1}}{A_{r+1}},
  \end{aligned}
\end{equation*}
where $(X_k,\omega_k)_{k=1}^{r+1}$ are the Gauss--Lobatto quadrature pairs in cell $K_j$ and $(A_k, Q_k)$ are the discrete initial point values of $\bm U$ at the $r+1$ Gauss--Lobatto points, i.e., $A_k\approx A(X_k)$ and $Q_k\approx Q(X_k)$, fulfill
\begin{align}\label{3.19}
  Q_k&=\rm{Const}, & E_k&=\vla{\frac{1}{2}\Big(\frac{Q_k}{A_k}\Big)^2+\frac{1}{\rho}\Bigg(\mathcal{K}_k\phi\Big(\frac{A_k}{(A_0)_k}\Big)+(p_{\rm ext})_k\Bigg)}= \rm{Const}.
\end{align}
Then, upon usage of the numerical method \vla{sketched in \cref{alg:HOWB}}, the moments $\bm U_j^{(\ell)}=(A_j^{(\ell)},Q_j^{(\ell)})$ and the point values $\bm V_{j\pm\frac{1}{2}}=(A_{j\pm\frac{1}{2}},u_{j\pm\frac{1}{2}})$ remain stationary. Namely, $\frac{{\rm d}}{{\rm d}t}\bm U_j^{(\ell)}=0$ and $\frac{{\rm d}}{{\rm d}t}\bm V_\jph=0$ hold at the steady state.
\end{proposition}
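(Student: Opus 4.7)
My plan is to split the verification into two independent parts: the evolution equation for the point values $\bm V_{j\pm 1/2}$ and the semi-discrete ODE system for the moments $\bm U_j^{(\ell)}$. The pivotal observation is that under assumption \eref{3.19}, the equilibrium variable $\bm E_k=(Q_k,E_k)$ takes a single constant value throughout every cell (and, by continuity of the representation at cell interfaces, globally), and moreover the local reference steady state $\widehat{\bm U}$ built by \cref{alg:findl} together with \eref{2.15bb} reproduces $\bm U$ at every Gauss--Lobatto node. These two facts together drive both cancellations.

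First I would handle the point-value update \eref{2.17}. Since $\bm E$ is a spatial constant across the support of every stencil in \eref{dE3}--\eref{dE5}, and any consistent finite-difference approximation of a first derivative annihilates constants, we have $\delta_{j+1/2}^{\pm}\bm E=\bm 0$. Feeding this into \eref{2.18} gives $\overleftarrow{\Phi}_{j+1}^{\bm V}=\overrightarrow{\Phi}_j^{\bm V}=\bm 0$ regardless of the sign decomposition \eref{J_sign}, whence $\frac{\rm d}{{\rm d}t}\bm V_{j+1/2}=\bm 0$.

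For the moment update I would first establish the key identity $\widehat{\bm U}_k=\bm U_k$ for all $k=1,\dots,r+1$ in every cell $K_j$. Because $(Q_k,E_k)$ is constant in $k$, every Gauss--Lobatto index is an admissible candidate for $\iota$ in \cref{alg:findl}: the hypothesis certifies that $A_k$ itself is a positive root of $\digamma(\cdot;Q_k,E_k,\mathcal{K}_k,(A_0)_k,(p_{\rm ext})_k)$, so by the monotonicity analysis of \cref{rmk:iota} we must have $\digamma(A^*)\leq 0$ at the critical point, and the selection criteria \eref{FF'} are automatically satisfied. Setting $\widehat Q_k=Q^e=Q_k$ and solving \eref{2.15bb} then yields $\widehat A_k=A_k$, provided the Newton iteration is initialized on the branch of $\digamma$ containing $A_k$. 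With $\widehat{\bm U}\equiv\bm U$ at all quadrature nodes, both bulk correction integrals in \eref{newsource} vanish identically under the Gauss--Lobatto quadrature, leaving only the boundary contribution $C_\ell\bigl[\widehat{\bm F}_{j+1/2}b_\ell(\Delta x/2)-\widehat{\bm F}_{j-1/2}b_\ell(-\Delta x/2)\bigr]$. Since $\widehat{\bm U}_{j\pm 1/2}=\bm U_{j\pm 1/2}$, this contribution exactly cancels the interface flux term $C_\ell[\bm F_{j+1/2}b_\ell(\Delta x/2)-\bm F_{j-1/2}b_\ell(-\Delta x/2)]$ on the left-hand side of \eref{2.13}, giving $\frac{\rm d}{{\rm d}t}\bm U_j^{(\ell)}=\bm 0$.

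The main obstacle is the branch selection in the Newton--Raphson solver for \eref{2.15bb}: for $Q\neq 0$, $\digamma$ can admit two positive roots (one sub-critical, one super-critical, cf. \cref{rmk:iota}), and the identity $\widehat A_k=A_k$ requires that the iterate be attracted to the correct one. This either has to be imposed as an implementation convention---initialize the solver at the current point-value $A_k$ obtained from \eref{Um}, which lies on the physical branch by assumption---or handled by augmenting the algorithm so that the retained root of \eref{2.15bb} is the one closest to $A_k$. Once this branch-selection issue is pinned down, the remainder of the argument reduces to the two algebraic cancellations sketched above and proceeds by direct verification.
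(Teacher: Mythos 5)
Your proposal follows essentially the same route as the paper's proof: the constancy of the equilibrium variables at all Gauss--Lobatto points forces $\delta_{\jph}^{\pm}\bm E=0$ so the point values are stationary, and the identity $\widehat{\bm U}_k=\bm U_k$ at every quadrature node makes the bulk terms in \eref{newsource} vanish under the Gauss--Lobatto quadrature, leaving boundary flux terms that cancel the interface fluxes in \eref{2.13} because $\widehat{\bm U}_{j\pm\frac{1}{2}}=\bm U_{j\pm\frac{1}{2}}$. The only difference is that you explicitly verify the admissibility of every index $\iota$ in \eref{FF'} and flag the Newton--Raphson branch-selection issue in \eref{2.15bb} (two positive roots when $Q\neq0$), a subtlety the paper's proof passes over silently and which, as you note, is resolved in practice by initializing the solver at the current value $A_k$ so that the correct root is recovered.
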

\begin{proof}
According to the definition of local reference \vla{steady-state solution} in \cref{sec24} and \eref{3.19}, we can find that $\bm U_k=\widehat{\bm U}_k$. Therefore, after we apply the Gauss--Lobatto quadrature to approximate the integrals in \eref{newsource}, the right-hand-side of \eref{2.13} reduces to
\begin{equation*}
  \bm F_j^{(\ell)}+\bm S_j^{(\ell)}=C_\ell\Big[\widehat{\bm F}_\jph b_\ell\big(\frac{\dx}{2}\big)-\widehat{\bm F}_\jmh b_\ell\big(-\frac{\dx}{2}\big)\Big]
\end{equation*}
Substituting this into \eref{2.13} and using the fact that $\bm U_\jmh=\bm U_1=\bm \widehat{\bm U}_1=\widehat{\bm U}_\jmh$ as well as $\bm U_\jph=\bm U_{r+1}=\widehat{\bm U}_{r+1}=\widehat{\bm U}_\jph$, we can obtain $\frac{{\rm d}}{{\rm d}t}\bm U_j^{(\ell)}=0$. Furthermore, since \eref{3.19} holds at all Gauss--Lobatto points, all the point values of the equilibrium variables $\bm E$ used in \cref{sec35} are constants, we easily get $\delta_\jph^{\pm}\bm E=0$. Therefore, $\overleftarrow{\Phi}_{j+1}^{\bm V}=0$ and $\overrightarrow{\Phi}_j^{\bm V}=0$, which implies that $\frac{{\rm d}}{{\rm d}t}\bm V_\jph=0$.
\end{proof}

\vla{
%\begin{proposition}
%For \eref{1.1}, when the averaged internal pressure, $p(x,t)$, is a constant, the PDE \eref{1.1} reduces to the following hyperbolic conservation laws:
%\begin{equation}\label{eq:3.31}
% \left\{ \begin{aligned}
%  &A_t+Q_x=0,\\
%  &Q_t+\Big(\frac{Q^2}{A}\Big)_x=0,
%  \end{aligned}\right.
%\end{equation}
%which implies that the formulation in \eref{AQ} satisfying
%\begin{equation*}
%  \Big(\frac{\mathcal{K}A_0}{\rho}\widetilde{\Phi}\big(\frac{A}{A_0}\big)\Big)_x=
%  -\frac{A_0}{\rho}\Phi\big(\frac{A}{A_0}\big)\mathcal{K}_x
%  +\frac{\mathcal{K}}{\rho}\widetilde{\Phi}(\frac{A}{A_0})(A_0)_x
%  -\frac{A}{\rho}(p_{\rm ext})_x=S^{(2)}(\bm U, x).
%\end{equation*}
%Then, the conservative scheme of \eref{eq:3.31} for updating cell averages (zero-th moment) is recovered. That is, in the following zero-th moment system, obtained from \eref{2.13},
%\begin{equation*}
% \frac{{\rm d}\bm U^{(0)}_j}{{\rm d}t}+C_0\Big[\bm{F}_\jph -\bm{F}_\jmh \Big]={\bm F}^{(0)}_j+{\bm S}^{(0)}_j,
%\end{equation*}
%the fluxes reduce to the ones evaluated for the flux vector $(Q,\frac{Q^2}{A})$ in \eref{eq:3.31} and the source term approximation is simply zero. 
%\end{proposition}
\begin{proposition}
When the space-varying parameters $\mathcal{K}$, $A_0$, and $p_{\rm ext}$ are constants, \eref{2.1} reduces to the following system of hyperbolic conservation laws:
\begin{equation}\label{eq:3.31}
 \left\{ \begin{aligned}
  &A_t+Q_x=0,\\
  &Q_t+\Big(\frac{Q^2}{A}+\frac{\mathcal{K}A_0}{\rho}\widetilde{\Phi}\big(\frac{A}{A_0}\big)\Big)_x=0.
  \end{aligned}\right.
\end{equation}
In this case, the conservative scheme for \eref{eq:3.31}, used to update cell averages (the zero-th moments), is recovered. Specifically, in the zero-th moment system derived from \eref{2.13},
\begin{equation*}
 \frac{{\rm d}\bm U^{(0)}_j}{{\rm d}t}+C_0\Big[\bm{F}_\jph -\bm{F}_\jmh \Big]={\bm F}^{(0)}_j+{\bm S}^{(0)}_j,
\end{equation*}
the term ${\bm F}^{(0)}_j+{\bm S}^{(0)}_j$ simplifies to zero. 
\end{proposition}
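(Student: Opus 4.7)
The plan is to directly specialize the source-term reformulation \eref{newsource} to $\ell = 0$ under the assumption that $\mathcal{K}$, $A_0$, and $p_{\rm ext}$ are globally constant, and show that each of the three contributions on the right-hand side vanishes. Since $b_0 \equiv 1$ by \cref{def1}, two simplifications are immediate: $\partial_x b_0(x - x_j) = 0$, which kills the middle integral in \eref{newsource}, and $b_0(\pm \dx/2) = 1$, which reduces the boundary contribution to $C_0[\widehat{\bm F}_\jph - \widehat{\bm F}_\jmh]$.

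Next I would treat the two remaining pieces. For the third integral in \eref{newsource}, I would observe from the definition of $\bm S(\bm U,x)$ just after \eref{2.2} that every term contains one of the factors $\mathcal{K}_x$, $(A_0)_x$, or $(p_{\rm ext})_x$, which all vanish identically when the parameters are constant. Thus $\bm S(\bm U,x) \equiv 0$ and $\bm S(\widehat{\bm U}, x) \equiv 0$, so the third integral is zero. For the boundary term $\widehat{\bm F}_\jph - \widehat{\bm F}_\jmh$, I would use the defining property of the local reference state, $\bm F(\widehat{\bm U})_x = \bm S(\widehat{\bm U}, x)$, together with the fact that the right-hand side is now zero. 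Hence $\bm F(\widehat{\bm U})$ is constant throughout the cell $K_j$, which gives $\widehat{\bm F}_\jph = \widehat{\bm F}_\jmh$ and so $\bm F^{(0)}_j + \bm S^{(0)}_j = 0$.

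Substituting this back into the $\ell=0$ version of \eref{2.13} yields
\begin{equation*}
  \frac{{\rm d}\bm U^{(0)}_j}{{\rm d}t} + \frac{1}{\dx}\bigl[\bm F_\jph - \bm F_\jmh\bigr] = 0,
\end{equation*}
which is the standard semi-discrete conservative FV scheme for \eref{eq:3.31} on cell averages. Because the interface point value $\bm U_\jph$ is shared between cells $K_j$ and $K_{j+1}$, the flux $\bm F_\jph$ is identical in both updates, so the global telescoping property of conservation is preserved and a Lax--Wendroff-type convergence statement applies.

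The only mildly delicate point is the constancy of $\widehat{\bm F}$ within each cell: in principle the reference state is defined pointwise at Gauss--Lobatto nodes through \cref{alg:findl} and the nonlinear solve \eref{2.15bb}, so one has to verify that with constant $\mathcal{K}$, $A_0$, $p_{\rm ext}$ the resulting $\widehat{\bm U}_k$ does not depend on $k$. This follows because the chosen $(Q^e, E^e) = (Q_\iota, E_\iota)$ is a single number per cell and \eref{2.15bb} then reduces, at every Gauss--Lobatto node, to the \emph{same} nonlinear equation in $\widehat{A}_k$; by the monotonicity analysis given in \cref{rmk:iota} this equation has a unique relevant root, so $\widehat{A}_k$ (and therefore $\widehat{\bm F}_k$) is independent of $k$, including at the endpoints $x_{j\pm 1/2}$. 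This is the main (though still short) technical step of the argument.
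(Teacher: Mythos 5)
Your argument is correct and follows essentially the same route as the paper: $b_0\equiv 1$ kills the bulk flux term, constancy of $\mathcal{K}$, $A_0$, $p_{\rm ext}$ makes both $\bm S(\bm U,x)$ and $\bm S(\widehat{\bm U},x)$ vanish, and constancy of the discrete reference state across the Gauss--Lobatto nodes gives $\widehat{\bm F}_\jph=\widehat{\bm F}_\jmh$; your closing check that \eref{2.15bb} is the same equation at every node (so $\widehat A_k$ is $k$-independent) is exactly the observation the paper makes, stated slightly more carefully. The only detail you leave implicit is the fall-back case where no admissible $\iota$ exists and the scheme sets $\widehat{\bm U}_k=\bm 0$ for all $k$, which the paper mentions explicitly, but the conclusion $\widehat{\bm F}_\jph=\widehat{\bm F}_\jmh$ holds trivially there as well.
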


\begin{proof}
  From \eref{newsource}, we have 
  \begin{equation*}
    {\bm F}^{(0)}_j+{\bm S}^{(0)}_j=C_0(\widehat{\bm{F}}_\jph-\widehat{\bm{F}}_\jmh)
    +C_0\int\limits_{K_j}\big(\bm S(\bm U, x)-\bm S(\widehat{\bm U}, x)\big)\,{\rm d}x.
  \end{equation*}
Since $\bm S(\bm U, x)=0$ and $\bm S(\widehat{\bm U}, x)=0$ when $\mathcal{K}$, $A_0$, and $p_{\rm ext}$ are constants, it follows that the approximation term $\bm S_j^{(0)}=0$. From \eref{2.15bb}, we observe that $\widehat{\bm U}_k$ are constants if the local reference steady-state solutions exist or $\widehat{\bm U}_k=0$ if they do not. In either case, $\widehat{\bm F}_\jph=\widehat{\bm F}_\jmh$, which implies $\bm F_j^{(0)}=0$.  Consequently, ${\bm F}^{(0)}_j + {\bm S}^{(0)}_j = 0$.
\end{proof}
}

\begin{rmk}[Nonlinear stability]
\vla{Any high-order scheme is susceptible to the Gibbs phenomenon. In our case, this manifests itself as spurious oscillations in the numerical solutions near strong discontinuities. These oscillations can lead to non-physical values, such as negative cross-sectional area $A$ in blood flow simulations and make the code crash. Moreover, negative values of $A$ will make the eigenvalues in \eref{evalue} become imaginary, thus undermining the equations' hyperbolicity. To address these challenges, a stabilization technique is necessary. As} it was done in \cite[Section 2.1]{AL_SW}, we adopt the MOOD paradigm to detect the troubled cells and replace the high-order scheme with a lower-order scheme to recompute the solutions in the troubled cells. \vla{The MOOD criteria are identical to those in \cite[Section 2.1]{AL_SW} and are omitted here for brevity.} The lowest-order scheme is the first-order Local Lax--Friedrichs scheme, \vla{which is guaranteed not to produce any spurious oscillations. Its positivity-preserving property can be verified using a procedure similar to that in \cite[Theorem 2.2]{AL_SW}. To implement the MOOD loop, we must define an ordered list of schemes, which is referred to as the ``cascade''. In this work, we employ the following cascade}: 5th-order $\rightarrow$ 4th-order $\rightarrow$ 3rd-order $\rightarrow$ 1st-order. \vla{The MOOD procedure is illustrated in Figure \ref{MOOD}.} 
\begin{figure}[ht!]
\centerline{\includegraphics[trim=0.01cm 0.01cm 0.05cm 0.02cm,clip,width=13.0cm]{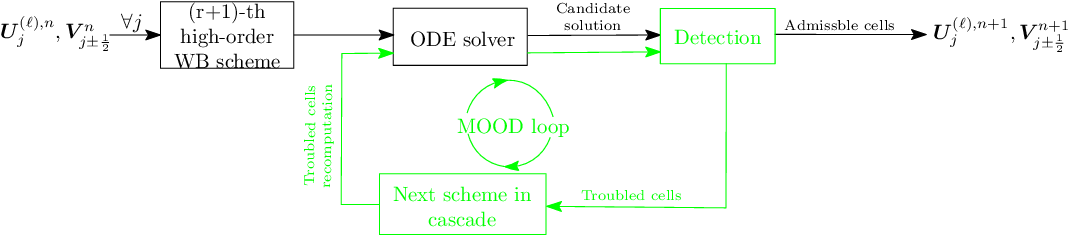}}
\caption{\sf Sketch of the hybrid finite element--finite volume scheme stabilized by a MOOD loop.\label{MOOD}}
\end{figure}
\end{rmk}

\section{Numerical Examples}\label{sec4}
In this section, we provide several numerical examples to demonstrate the performance of the proposed hybrid finite element-finite volume WB schemes of third-, fourth-, and fifth-order of accuracy. In all numerical experiments, unless specified otherwise, we use zero-order extrapolation boundary conditions, \vla{i.e., homogeneous Neumann boundary conditions,} and plot the cell average values in all Figures. We consider spatial discretizations of third, fourth, and fifth order of accuracy. The semi-discrete ODE systems are integrated using the three-stage third-order strong stability preserving (SSP) Runge--Kutta method with an adaptive time step computed at every time level based on CFL numbers of \bla{$0.4$, $0.2$, and $0.1$ for the third-, fourth-, and fifth-order schemes, respectively}. \vla{In certain examples, we compare the performance of the high-order well-balanced (WB) schemes with their non well-balanced (NWB) counterparts to explicitly highlight the importance and advantages of well-balancing. The non well-balanced schemes are constructed by setting the local reference steady-state solutions $\widehat{\bm U}=(\widehat A,\widehat Q)=\bm 0$ in \eref{newsource} at all Gauss--Lobatto points. Additionally, for some results, we report the discrete $L^1$- and $L^{\infty}$-errors between two vectors $\mu$ and $\nu$, defined as
\begin{equation*}
  \|\mu-\nu\|_{L^1}=\dx\sum_{j=1}^{N}\vert\mu_j-\nu_j\vert
\end{equation*}
and
\begin{equation*}
  \|\mu-\nu\|_{L^{\infty}}=\max_j \vert\mu_j-\nu_j\vert,
\end{equation*}
respectively.}

\vla{In Examples 1--7, we consider the simplest tube law in an artery with parameters $m=\frac{1}{2}$ and $n=0$ in \eref{phiamn} and \eref{Phiamn}. For other parameters in \eref{AQ}, we use $p_{\rm ext}={\rm Const}$, $\rho=1060$, and $\mathcal{K}=\frac{\kappa}{\sqrt{\pi}}\sqrt{A_0}$, where $\kappa=10^8$ and $A_0(x)$ is a function of $x$. In Examples 8 and 9, we take $\rho=1050$ and examine a general tube law with all space-varying parameters for arteries ($m=\frac{1}{2}$, $n=0$) and veins ($m=10$, $n=-\frac{3}{2}$), respectively.} 

\subsection*{Example 1---Accuracy Test}
In the first example, we evaluate the accuracy of the proposed high-order hybrid finite element-finite volume WB schemes on a problem with smooth solutions. To this end, we consider the initial conditions defined over the interval $[0, 10]$ as follows:
\begin{equation*}
  A(x,0)=\sin(0.2\pi x)+10,\quad Q(x,0)=e^{\cos(0.2\pi x)}, \quad A_0(x)=\frac{1}{2}\cos^2(0.2\pi x)+5.
\end{equation*} 
Periodic boundary conditions are employed and the numerical solutions are computed until the final time $t=0.01$. We measure the discrete $L^1$-errors and the experimental convergence rate using the Runge formulae, as detailed in \cite{CK23_blood}, and then report the obtained results in Table \ref{tab1}. It is evident from the findings that, for each $r$-degree polynomial space, an accuracy of order $(r+1)$ is achieved.

\begin{table}[!ht]
\caption{\sf Example 1: $L^1$-errors and experimental convergence rates computed with parabolic, cubic, and quartic polynomials.\label{tab1}}
\begin{center}
\begin{tabular}{c| c| c c c c c c}\hline
\multicolumn{1}{c|}{\multirow{2}{*}{Var.}} &\multicolumn{1}{c|}{\multirow{2}{*}{$N$}} &\multicolumn{2}{c}{$r=2$} &\multicolumn{2}{c}{$r=3$} &\multicolumn{2}{c}{$r=4$}\\ \cline{3-8}
 &  & $L^1$-error   & rate  & $L^1$-error  &rate  & $L^1$-error  & rate\\ \hline
 \multicolumn{1}{c|}{\multirow{4}{*}{$A$}} &40 &4.07e-03 & 2.88 & 8.22e-06 & 4.54 &5.44e-07 &5.04 \\ 
 &80 &4.63e-04 & 3.00  & 5.45e-07& 4.23&1.73e-08&5.01 \\
 &160 &5.60e-05 & 3.02 &3.53e-08 & 4.09&7.34e-10&4.78 \\
 &320 &6.98e-06 & 3.01 &2.21e-09 &4.05 &4.82e-11&4.37 \\ \hline
 \multicolumn{1}{c|}{\multirow{4}{*}{$Q$}} &40 &7.08e-01 & 2.60 & 3.54e-03& 4.02&1.14e-04&4.86 \\ 
 &80 &9.01e-02 & 2.77  &2.22e-04 &4.01 &3.26e-06&4.99 \\
 &160 &1.09e-02 & 2.90 & 1.37e-05&4.01 &9.99e-08&5.01 \\
 &320 &1.35e-03 & 2.95 &8.39e-07 &4.02 &3.24e-09&4.98 \\ \hline
\end{tabular}
\end{center}
\end{table}

\bla{Furthermore, Figure \ref{Ex1_fig1} shows the $L^1$-errors in $Q$ versus CPU times for the third-, fourth-, and fifth-order implementations of the proposed well-balanced schemes. The CPU times are that resulting from a sequence of successively refined meshes. It is seen that, for example, to achieve an acceptable error of $10^{-8}$, the fifth-order scheme is approximately $2.5$ times more efficient than the fourth-order scheme and nearly $10$ times more efficient than the third-order scheme. This leads to the following conclusion:
\begin{rmk}
On a given grid, high-order methods are slower than first-order ones, however, they also give better results. Their error also decreases much more quickly upon grid refinement. Therefore for a given error, they tend to outperform first-order method. The very fine grids that these latter need to be run on additionally entail small time steps due to the CFL condition.
\end{rmk}

\begin{figure}[ht!]
\centerline{\includegraphics[trim=0.02cm 0.05cm 0.2cm 0.2cm,clip,width=6.5cm]{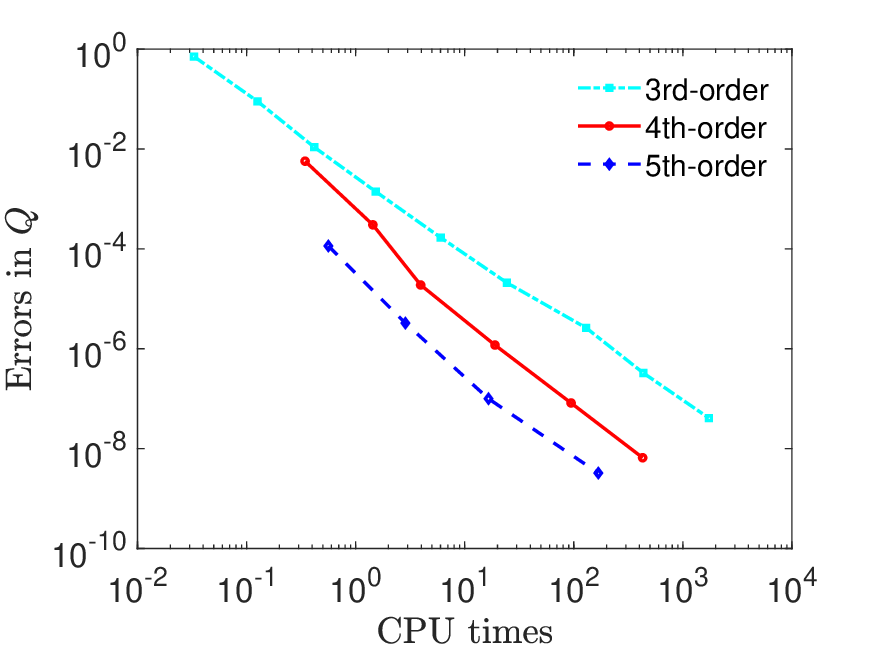}}
\caption{\sf Example 1: Errors versus computational CPU times.\label{Ex1_fig1}}
\end{figure}
}

\subsection*{Example 2---\vla{``blood-at-rest'' steady-state solution}}
In the second example, we showcase the capability of the proposed high-order WB scheme to exactly preserve the ``blood-at-rest'' steady-state solution, as denoted by \eref{s3}. To this end, we consider a cross-sectional area at rest given by:
\begin{equation*}\label{4.1}
  A_0(x)=\pi \big(R_0(x)\big)^2,
\end{equation*}
where
\begin{equation}\label{r01}
  R_0(x)=\left\{
  \begin{aligned}
  &\widetilde{R}, && x\in[0,x_1]\cup[x_4,L],\\
  &\widetilde{R}+\frac{\Delta R}{2}\left[\sin\Big(\frac{x-x_1}{x_2-x_1}\pi-\frac{\pi}{2}\Big)+1\right], &&x\in[x_1,x_2],\\
  &\widetilde{R}+\Delta R, && x\in[x_2,x_3],\\
  &\widetilde{R}+\frac{\Delta R}{2}\left[\cos\Big(\frac{x-x_3}{x_4-x_3}\pi\Big)+1\right], &&x\in[x_3,x_4],
  \end{aligned}\right.
\end{equation}
for the \vla{unloaded configuration where $p=p_{\rm ext}$}, and
\begin{equation}\label{r02}
 R_0(x)=\left\{
  \begin{aligned}
  &\widetilde{R}+\Delta R, && x\in[0,x_1]\cup[x_4,L],\\
  &\widetilde{R}-\frac{\Delta R}{2}\left[\sin\Big(\frac{x-x_1}{x_2-x_1}\pi-\frac{\pi}{2}\Big)-1\right], &&x\in[x_1,x_2],\\
  &\widetilde{R}, && x\in[x_2,x_3],\\
  &\widetilde{R}-\frac{\Delta R}{2}\left[\cos\Big(\frac{x-x_3}{x_4-x_3}\pi\Big)-1\right], &&x\in[x_3,x_4],
  \end{aligned}\right.
\end{equation}
\vla{for the loaded configuration.} The parameters in \eref{r02} are detailed in Table \ref{tab20}, while the initial conditions for the cross-sectional area and discharge are determined by the steady-state solution \eref{s3}, expressed as:
\begin{equation}\label{Ex2_IC}
  A(x,0)=\left\{\begin{aligned}
  &\pi \big(R_0(x)\big)^2,&&\mbox{\vla{for unloaded configuration}},\\
  &\big(0.001+\sqrt{\pi}R_0(x)\big)^2,&&\mbox{\vla{for loaded configuration}},
  \end{aligned}\right.
  \quad Q(x,0)\equiv0.
\end{equation}

\begin{table}[!ht]
\caption{\sf Example 2: parameters in the profile of $R_0(x)$.\label{tab20}}
\begin{center}
\begin{tabular}{c|c|c|c|c|c|c|c}
\hline
$r_0(x)$ & $\widetilde{R}$ & $\Delta R$ & $L$ & $x_1$ & $x_2$ & $x_3$ & $x_4$\\ \hline
\eref{r01}&\multirow{2}{*}{$0.004m$}& \multirow{2}{*}{$0.001m$}&\multirow{2}{*}{$0.14m$}& $0.01m$& $0.0305m$& $0.0495m$ & $0.07m$\\\cline{1-1} \cline{5-8}
\eref{r02} & & & & $0.0315m$& $0.035m$& $0.105m$ & $0.1085m$\\
\hline
\end{tabular}
\end{center}
\end{table}

The numerical solutions for the aforementioned two cases are computed using the proposed arbitrarily high-order schemes until reaching a final time $t=5$ on mesh size of $50$ uniform cells in the computational domain $[0,L]$. The errors in $A$ \vla{obtained by well-balanced and non well-balanced schemes} are reported in Tables \ref{tab21} and \ref{tab22} for unloaded and loaded configurations, respectively. These results clearly show that the developed arbitrarily high-order WB schemes are capable of exactly preserving the steady-state solutions at the discrete level, even with a coarse mesh resolution. \vla{However, it is obvious that the non well-balanced schemes fail to preserve these steady-state solutions.}

\begin{table}[!ht]
\color{red}
\caption{\sf Example 2: Errors in $A$ for the unloaded configuration and computed with well-balanced (WB) and non well-balanced (NWB) schemes using parabolic, cubic, and quartic polynomials.\label{tab21}}
\begin{center}
\begin{tabular}{c| c c c c c c}\hline
\multicolumn{1}{c|}{\multirow{2}{*}{Schs.}}  &\multicolumn{2}{c}{$r=2$} &\multicolumn{2}{c}{$r=3$} &\multicolumn{2}{c}{$r=4$}\\ \cline{2-7}
   & $L^1$-error   & $L^\infty$-error  & $L^1$-error  &$L^\infty$-error  & $L^1$-error  & $L^\infty$-error\\ \hline
 \multicolumn{1}{c|}{\multirow{1}{*}{WB}}  &1.16e-21 & 5.42e-20 &2.68e-21 &1.08e-19 &6.74e-17 & 5.62e-16 \\ \hline
 \multicolumn{1}{c|}{\multirow{1}{*}{NWB}}  &1.88e-12 & 4.61e-11 &1.12e-13 &1.91e-11 &7.37e-15 &1.77e-12 \\ \hline
\end{tabular}
\end{center}
\end{table}

\begin{table}[!ht]
\color{red}
\caption{\sf Example 2: Same as in Table \ref{tab21} but for \vla{the loaded configuration}.\label{tab22}}
\begin{center}
\begin{tabular}{c| c c c c c c}\hline
\multicolumn{1}{c|}{\multirow{2}{*}{Schs.}}  &\multicolumn{2}{c}{$r=2$} &\multicolumn{2}{c}{$r=3$} &\multicolumn{2}{c}{$r=4$}\\ \cline{2-7}
   & $L^1$-error   & $L^\infty$-error  & $L^1$-error  &$L^\infty$-error  & $L^1$-error  & $L^\infty$-error\\ \hline
 \multicolumn{1}{c|}{\multirow{1}{*}{WB}}  &4.17e-22 & 4.07e-20 &2.66e-22 &2.71e-20 &1.19e-16 &9,37e-16 \\ \hline
 \multicolumn{1}{c|}{\multirow{1}{*}{NWB}}  &2.41e-09 & 3.24e-08 &1.12e-10 &1.73e-08 &7.84e-11 &4.44e-09 \\ \hline
\end{tabular}
\end{center}
\end{table}

\subsection*{Example 3---Perturbation of a \vla{loaded} ``blood-at-rest'' \vla{steady-state solution}}
In the third example, we assess the ability of the proposed schemes to accurately track the propagation of an initial small perturbation from the non-zero pressure ``blood-at-rest'' steady state given in Example 2. Maintaining the same radius at rest and initial discharge as specified in \eref{r02} and \eref{Ex2_IC}, respectively, we introduce a small perturbation at the cross-sectional area given by:
\begin{equation*}
  A(x,0)=\left\{\begin{aligned}
  &A_{\rm eq}(x)\Big[1-10^{-3}\sin\Big(\frac{500}{7}\pi(x-0.063)\Big)\Big]^2&&\mbox{if}~x\in[0.063,0.077],\\
  &A_{\rm eq}(x)&&\mbox{otherwise},
  \end{aligned}\right.
\end{equation*} 
where $A_{\rm eq}(x)=\big(0.001+\sqrt{\pi}R_0(x)\big)^2$ with $R_0(x)$ defined as \eref{r02}.

We compute the solution until a final time $t=0.0016$ using two different mesh sizes, $N=50$ and $N=200$ uniform cells. The time snapshots of the variation $A(x,t)-A_{\rm eq}(x)$ are plotted in Figures \ref{Ex3_fig1}. \vla{For all WB methods, as} anticipated, the initial perturbation imposed at the center of the artery divides into two humps propagating in opposite directions \vla{and} the propagation process is accurately captured. When employing a coarse mesh with $N=50$ uniform cells, we observe that among the three studied \vla{WB} schemes with different orders of accuracy, the fifth-order scheme yields the most accurate results, with the fourth-order scheme outperforming the third-order scheme. \vla{However, the non well-balanced schemes generate relatively large spurious oscillations.} Upon refining the mesh to $N=200$ uniform cells, the results obtained by the six different schemes are nearly identical, with minor distinctions discernible only through local zooms. \vla{We observe that the non-physical oscillations produced by the non well-balanced schemes are effectively eliminated by mesh refinement; however, this approach incurs significantly higher computational costs. The results clearly demonstrate the superiority of the well-balanced schemes over their non-well-balanced counterparts.} \bla{Additionally, when comparing the performance of non well-balanced schemes on a coarse mesh, we find that high-order schemes also help mitigate non-physical waves, which aligns with the findings reported in \cite{Veiga_HOWB}.}    

\begin{figure}[ht!]
\color{red}
\centerline{\subfigure[WB, $N=50$]{\includegraphics[trim=0.8cm 0.25cm 1.1cm 0.2cm,clip,width=3.2cm]{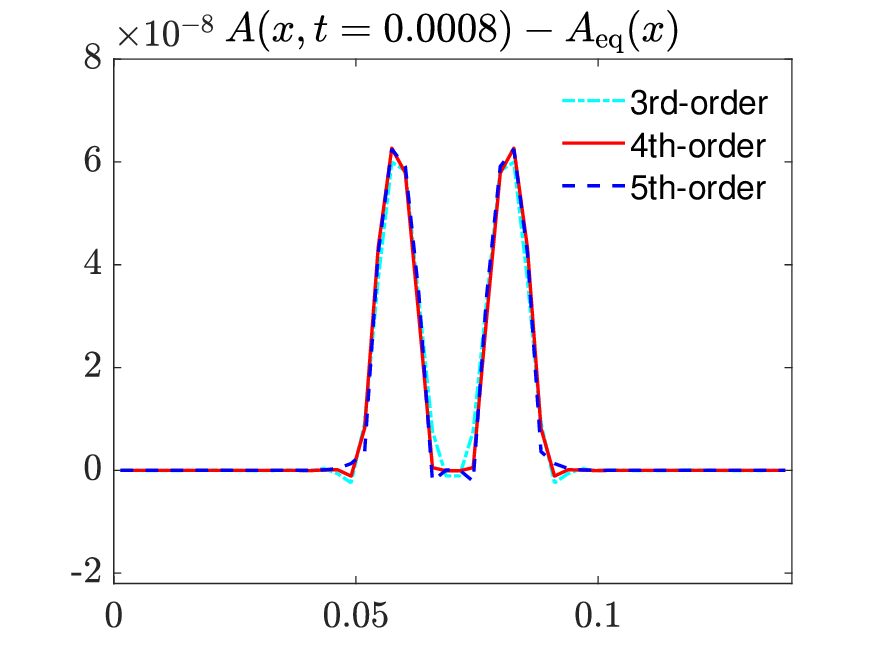}}\hspace*{0.005cm}
\subfigure[NWB, $N=50$]{\includegraphics[trim=0.8cm 0.25cm 1.1cm 0.2cm,clip,width=3.2cm]{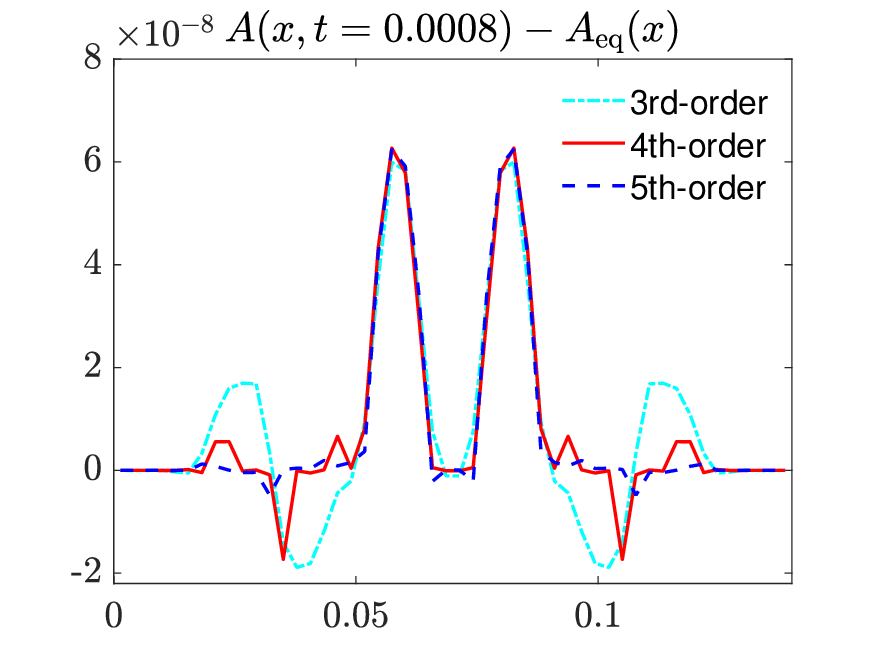}}\hspace*{0.005cm}
\subfigure[WB, $N=50$]{\includegraphics[trim=0.8cm 0.25cm 1.1cm 0.2cm,clip,width=3.2cm]{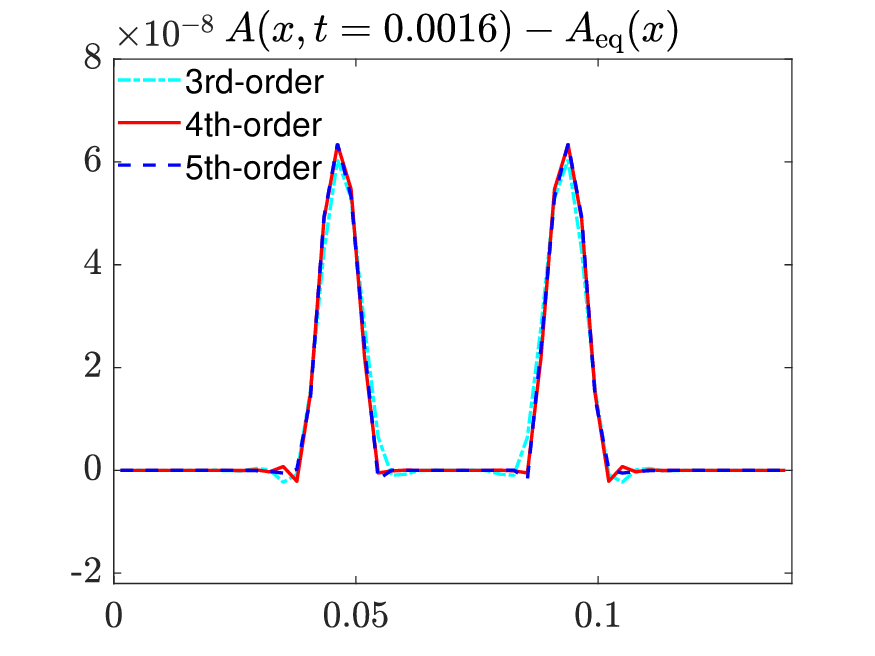}}\hspace*{0.005cm}
\subfigure[NWB, $N=50$]{\includegraphics[trim=0.8cm 0.25cm 1.1cm 0.2cm,clip,width=3.2cm]{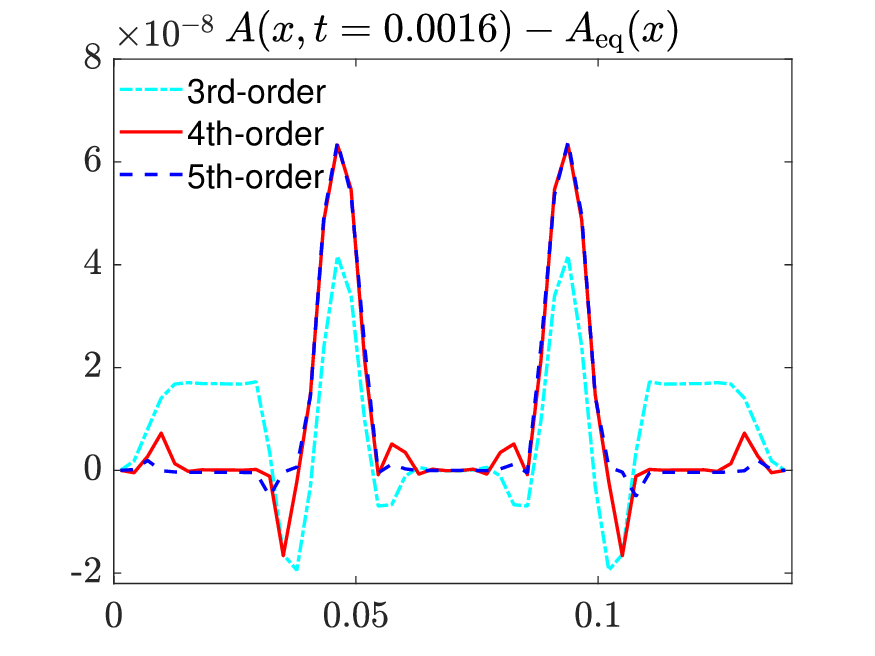}}}
\vskip5pt
\centerline{\subfigure[WB, $N=200$]{\includegraphics[trim=0.8cm 0.25cm 1.1cm 0.2cm,clip,width=3.2cm]{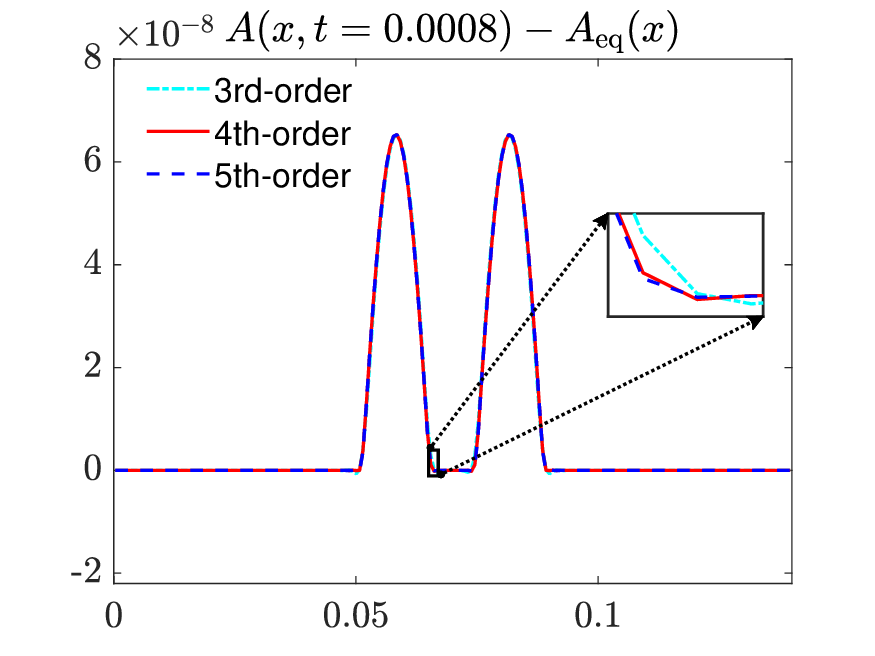}}\hspace*{0.005cm}
\subfigure[NWB, $N=200$]{\includegraphics[trim=0.8cm 0.25cm 1.1cm 0.2cm,clip,width=3.2cm]{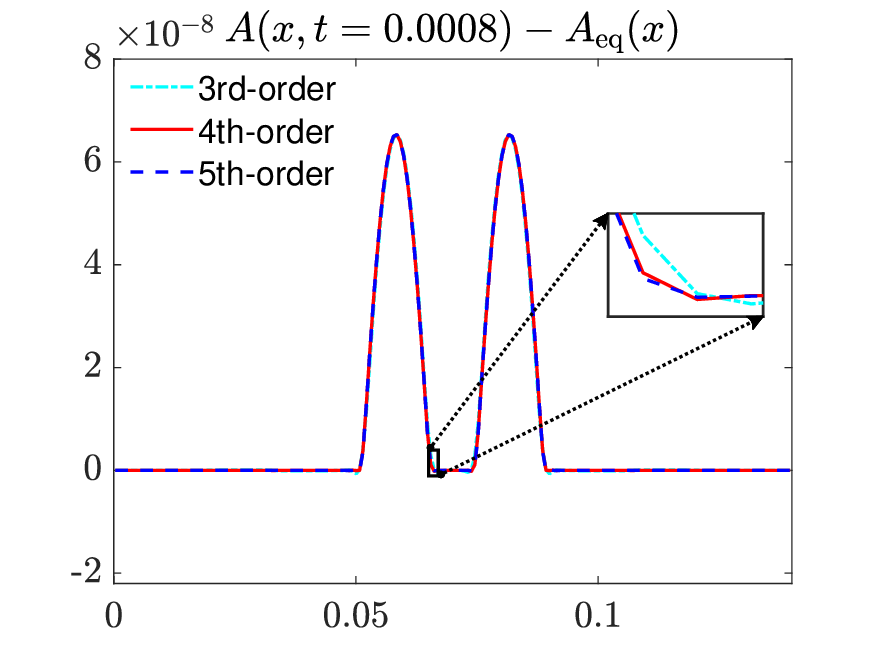}}\hspace*{0.005cm}
\subfigure[WB, $N=200$]{\includegraphics[trim=0.8cm 0.25cm 1.1cm 0.2cm,clip,width=3.2cm]{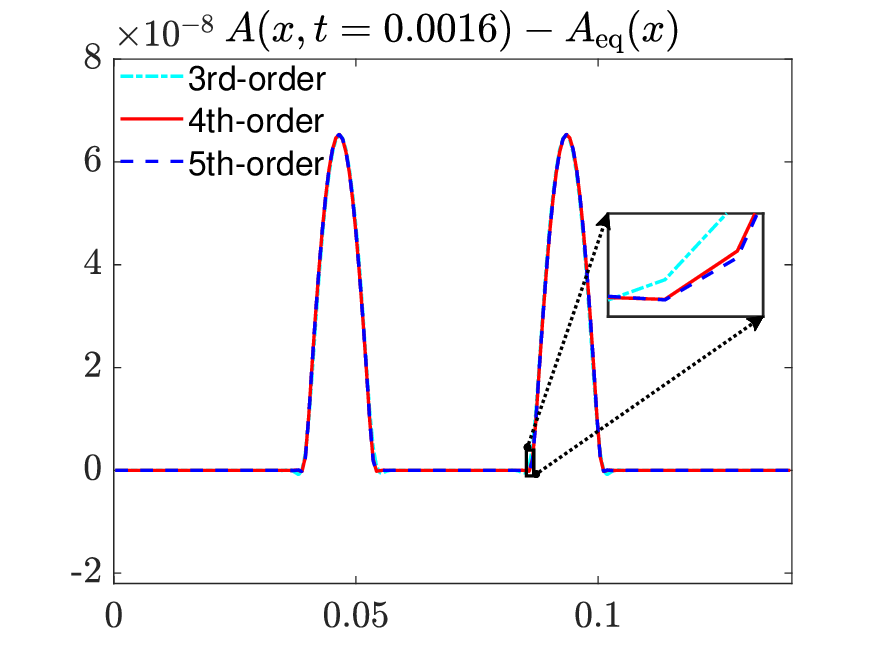}}\hspace*{0.005cm}
\subfigure[NWB, $N=200$]{\includegraphics[trim=0.8cm 0.25cm 1.1cm 0.2cm,clip,width=3.2cm]{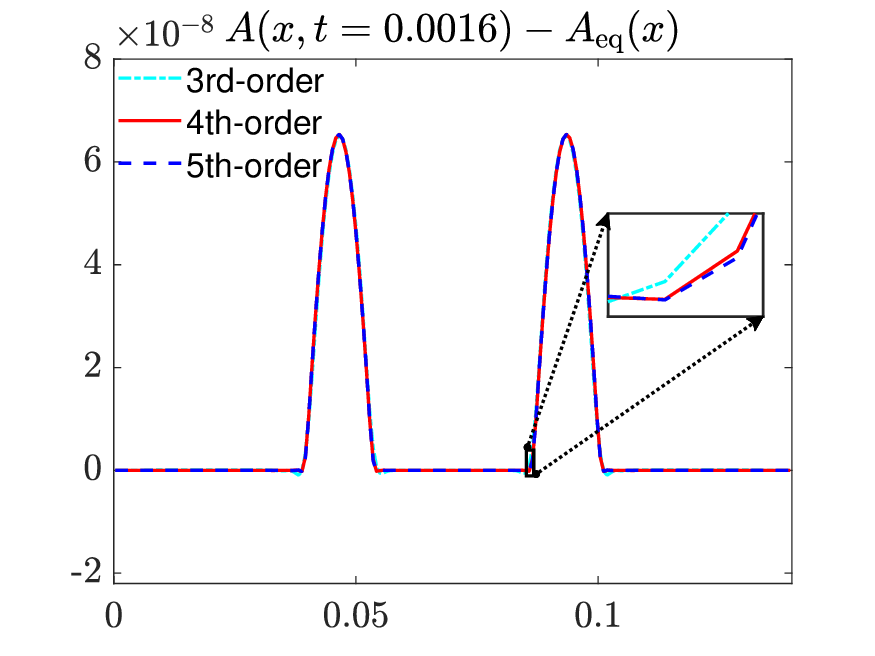}}}
\caption{\sf Example 3: Time snapshots ($t=0.0008$ and $t=0.0016$) of the difference $A(x,t)-A_{\rm eq}(x)$ computed by WB and non WB schemes.\label{Ex3_fig1}}
\end{figure}

\subsection*{Example 4---Non-zero-velocity ``moving-blood'' steady states}
In the fourth example, we demonstrate how the proposed hybrid WB schemes preserve the non-zero steady state \eref{s1}. We examine three distinct cases corresponding to the physiological conditions of an aneurysm, a stenosis, and a decreasing step. The initial conditions for each case are determined from the steady-state condition \eref{s1}, which is expressed as
\begin{equation}\label{Ex4_IC}
  Q_{\rm s}=Q_{\rm in},\quad E_{\rm s}=\frac{(Q_{\rm in})^2}{2(A_{\rm out})^2}+\beta(\sqrt{A_{\rm out}}-\sqrt{A_0(L)}),
\end{equation} 
where the subscripts ``in'' and ``out'' denote values at the inlet (left side) and outlet (right side) of the domain, respectively, with $L$ representing the length of the artery (set to $0.16$ in this example). In \eref{Ex4_IC}, the values of $A_{\rm in}$, $A_{\rm out}$, and $Q_{\rm in}$ are given by
\begin{equation*}
  A_{\rm in}=A_0(0)[1+S_{\rm in}]^2,\quad A_{\rm out}=A_0(L)[1+S_{\rm in}]^2,\quad Q_{\rm in}=A_{\rm in}S_{\rm in}C_{\rm in},
\end{equation*} 
where $S_{\rm in}$ is the Shapiro number at the inlet, equivalent to the Froude number for the shallow water equations. For all test cases considered here, we set $S_{\rm in}=\{0.5,0.1,0.01\}$, given that blood flow is typically subcritical. Additionally, $C_{\rm in}$ denotes the Moens-Korteweg velocity at the inlet, defined as
\begin{equation*}
  C_{\rm in}=\sqrt{\frac{\kappa\sqrt{A_{\rm in}}}{2\rho\sqrt{\pi}}}.
\end{equation*}

%These parameters together with the wall deformation parameter $\Delta R$ and the radius at the inlet $R_{\rm in}$ are found in Table \ref{tab40}.
%\begin{table}[!ht]
%\caption{\sf Example 4: parameters for the test of ``moving-blood'' equilibria.\label{tab40}}
%\begin{center}
%\begin{tabular}{c|c|c|c|c|c}
%\hline
%$S_{\rm in}$ & $A_{\rm in}$ & $C_{\rm in}$ & $Q_{\rm in}$ &$R_{\rm in}$ & $\Delta R$\\ \hline
%0.5& $1.1310\times10^{-4}$&$16.8232$&$9.5133\times10^{-4}$&\multirow{3}{*}{$4\times10^{-3}m$}& \multirow{3}{*}{$1\times10^{-3}m$}\\\cline{1-4}
%0.1& $6.0821\times10^{-5}$&$14.4065$&$8.7622\times10^{-5}$ & & \\ \cline{1-4}
%0.01& $5.1276\times10^{-5}$&$13.8046$&$7.0784\times10^{-6}$ & & \\ \hline
%\end{tabular}
%\end{center}
%\end{table}

\subsubsection*{Test 1: An aneurysm}
In this test, we simulate an artery with an aneurysm by defining the cross-sectional radius at rest (see Figure \ref{Ex4_fig1}, left) as 
\begin{equation*}
  R_0(x)=\left\{
  \begin{aligned}
  &R_{\rm in}, && x\in[0,x_1]\cup[x_4,L],\\
  &R_{\rm in}+\frac{\Delta R}{2}\left[1-\cos\Big(\frac{x-x_1}{x_2-x_1}\pi\Big)\right], &&x\in[x_1,x_2],\\
  &R_{\rm in}+\Delta R, && x\in[x_2,x_3],\\
  &R_{\rm in}+\frac{\Delta R}{2}\left[1+\cos\Big(\frac{x-x_3}{x_4-x_3}\pi\Big)\right], &&x\in[x_3,x_4],
  \end{aligned}\right.
\end{equation*}
where $R_{\rm in}=4\times10^{-3}$, $\Delta R=1\times10^{-3}$, $x_1=\frac{9L}{40}$, $x_2=\frac{L}{4}$, $x_3=\frac{3L}{4}$, $x_4=\frac{31L}{40}$, and the cross-sectional area at rest is given by $A_0(x)=\pi(R_0(x))^2$. The initial profile of cross-sectional area $A(x,0)$ is determined by solving \eref{Ex4_IC}. With the obtained $A(x,0)$ and $Q(x,0)=Q_{\rm in}$, we run the simulation on a uniform mesh of $50$ cells until a final time $t=5$. The $L^1$- and $L^{\infty}$-errors \vla{in $A$, computed using both well-balanced and non well-balanced schemes,} are shown in Table \ref{tab41}. \vla{The results clearly indicate that only the errors computed by the well-balanced schemes are within machine accuracy}, confirming the well-balanced property for ``moving-blood'' flow with an aneurysm is successfully maintained \vla{by the well-balanced schemes, whereas it is not preserved by the non well-balanced schemes.} %\bla{Furthermore, examining the errors computed by the non well-balanced schemes reveals that higher-order schemes produce smaller errors, again demonstrating the superiority over lower-order schemes.} 

\begin{table}[!ht]
\color{red}
\caption{\sf Example 4---Test 1: $L^1$- and $L^\infty$-errors in $A$ computed with well-balanced and non well-balanced schemes using parabolic, cubic, and quartic polynomials for the steady state in an aneurysm.\label{tab41}}
\begin{center}
\begin{tabular}{c| c| c c c c c c}\hline
\multicolumn{1}{c|}{\multirow{2}{*}{$S_{\rm in}$}} &\multicolumn{1}{c|}{\multirow{2}{*}{Schs.}}  &\multicolumn{2}{c}{$r=2$} &\multicolumn{2}{c}{$r=3$} &\multicolumn{2}{c}{$r=4$}\\ \cline{3-8}
 & & $L^1$-error   & $L^\infty$-error  & $L^1$-error  &$L^\infty$-error  & $L^1$-error  & $L^\infty$-error\\ \hline
{\multirow{2}{*}{$0.5$}}& \multicolumn{1}{c|}{\multirow{1}{*}{WB}}  &3.90e-22 & 4.07e-20 &2.26e-21 &2.71e-20 &2.66e-16 & 2.01e-15 \\ \cline{2-8}
& \multicolumn{1}{c|}{\multirow{1}{*}{NWB}}  &2.02e-10 & 2.52e-09 &2.84e-10 &2.04e-08 &2.77e-10 &7.86e-09 \\ \hline
{\multirow{2}{*}{$0.1$}}& \multicolumn{1}{c|}{\multirow{1}{*}{WB}}  &1.28e-21 & 2.71e-20 &1.04e-21 &1.36e-20 &7.72e-17 &5.31e-16 \\ \cline{2-8}
& \multicolumn{1}{c|}{\multirow{1}{*}{NWB}}  &3.89e-09 & 4.57e-08 &1.05e-10 &1.47e-08 &6.11e-11 &4.34e-09 \\ \hline
{\multirow{2}{*}{$0.01$}}& \multicolumn{1}{c|}{\multirow{1}{*}{WB}}  &5.42e-22 & 2.71e-20 &1.08e-22 &1.36e-20 &6.85e-17 &4.60e-16 \\ \cline{2-8}
& \multicolumn{1}{c|}{\multirow{1}{*}{NWB}}  &4.32e-09 & 5.08e-08 &1.04e-10 &1.44e-08 &5.48e-11 &4.22e-09 \\ \hline
\end{tabular}
\end{center}
\end{table}

\subsubsection*{Test 2: A Stenosis}
In this test, we simulate an artery with a stenosis by defining the cross-sectional radius at rest (see Figure \ref{Ex4_fig1}, middle) as 
\begin{equation*}
  R_0(x)=\left\{
  \begin{aligned}
  &R_{\rm in}, && x\in[0,x_1]\cup[x_2,L],\\
  &R_{\rm in}-\frac{\Delta R}{4}\bigg[1-\cos\Big(2\pi\frac{x-x_1}{x_2-x_1}\Big)\bigg], &&x\in[x_1,x_2],
  \end{aligned}\right.
\end{equation*}
where $x_1=\frac{3L}{10}$ and $x_2=\frac{7L}{10}$. We repeat the simulation as in Test 1, and show the $L^1$- and $L^\infty$-errors in Table \ref{tab42}, which validates the well-balanced property \vla{of the proposed high-order schemes} for ``moving-blood'' flow with a stenosis.

\begin{table}[!ht]
\color{red}
\caption{\sf Example 4---Test 2: Same as in Table \ref{tab41} but for the flow in a stenosis.\label{tab42}}
\begin{center}
\begin{tabular}{c| c| c c c c c c}\hline
\multicolumn{1}{c|}{\multirow{2}{*}{$S_{\rm in}$}} &\multicolumn{1}{c|}{\multirow{2}{*}{Schs.}}  &\multicolumn{2}{c}{$r=2$} &\multicolumn{2}{c}{$r=3$} &\multicolumn{2}{c}{$r=4$}\\ \cline{3-8}
 & & $L^1$-error   & $L^\infty$-error  & $L^1$-error  &$L^\infty$-error  & $L^1$-error  & $L^\infty$-error\\ \hline
{\multirow{2}{*}{$0.5$}}& \multicolumn{1}{c|}{\multirow{1}{*}{WB}}  &1.04e-21 & 5.42e-20 &7.81e-22 &4.07e-20 &1.80e-16 &1.30e-15 \\ \cline{2-8}
& \multicolumn{1}{c|}{\multirow{1}{*}{NWB}}  &1.93e-13 & 1.00e-11 &3.11e-15 &7.94e-14 &1.82e-16 &1.32e-15 \\ \hline
{\multirow{2}{*}{$0.1$}}& \multicolumn{1}{c|}{\multirow{1}{*}{WB}}  &7.16e-22 & 2.03e-20 &4.55e-22 &2.03e-20 &6.25e-17 &4.13e-16 \\ \cline{2-8}
& \multicolumn{1}{c|}{\multirow{1}{*}{NWB}}  &4.65e-14 & 1.43e-12 &1.04e-15 &3.46e-14 &6.28e-17 &4.21e-16 \\ \hline
{\multirow{2}{*}{$0.01$}}& \multicolumn{1}{c|}{\multirow{1}{*}{WB}}  &3.86e-21 & 1.56e-19 &3.04e-21 &4.07e-20 &5.36e-17 &3.49e-16 \\ \cline{2-8}
& \multicolumn{1}{c|}{\multirow{1}{*}{NWB}}  &5.18e-14 & 1.59e-12 &1.01e-15 &3.38e-14 &5.50e-17 &3.72e-16 \\ \hline
\end{tabular}
\end{center}
\end{table}

\subsubsection*{Test 3: A decreasing step}
In this test, we examine a scenario where the artery's radius undergoes an instantaneous reduction, representing the transition from a parent to a daughter artery. This idealized transition results in a sudden change in the cross-sectional radius at a specific location. The radius at rest (see Figure \ref{Ex4_fig1}, right) is defined as 
\begin{equation*}
  R_0(x)=\left\{
  \begin{aligned}
  &R_{\rm in}, && x<\frac{L}{2},\\
  &R_{\rm in}-\frac{\Delta R}{2}, &&x\geq\frac{L}{2}.
  \end{aligned}\right.
\end{equation*}
Following the procedures of Tests 1 and 2, we compute the numerical solution up to a final time $t=5$ and present the $L^1$ and $L^\infty$ errors in Table \ref{tab43}. These results confirm the exact preservation of the ``moving-blood'' flow during the decreasing step.  

\begin{table}[!ht]
\color{red}
\caption{\sf Example 4---Test 3: Same as in Tables \ref{tab41} and \ref{tab42} but for the flow in a decreasing step.\label{tab43}}
\begin{center}
\begin{tabular}{c| c| c c c c c c}\hline
\multicolumn{1}{c|}{\multirow{2}{*}{$S_{\rm in}$}} &\multicolumn{1}{c|}{\multirow{2}{*}{Schs.}}  &\multicolumn{2}{c}{$r=2$} &\multicolumn{2}{c}{$r=3$} &\multicolumn{2}{c}{$r=4$}\\ \cline{3-8}
 & & $L^1$-error   & $L^\infty$-error  & $L^1$-error  &$L^\infty$-error  & $L^1$-error  & $L^\infty$-error\\ \hline
{\multirow{2}{*}{$0.5$}}& \multicolumn{1}{c|}{\multirow{1}{*}{WB}}  &1.82e-21 & 4.07e-20 &0 &0 &2.67e-16 &1.97e-15 \\ \cline{2-8}
& \multicolumn{1}{c|}{\multirow{1}{*}{NWB}}  &8.49e-09 & 8.20e-08 &1.28e-08 &3.51e-07 &1.50e-08 &2.79e-07 \\ \hline
{\multirow{2}{*}{$0.1$}}& \multicolumn{1}{c|}{\multirow{1}{*}{WB}}  &6.51e-23 & 1.36e-20 &0 &0 &6.40e-17 &4.36e-16 \\ \cline{2-8}
& \multicolumn{1}{c|}{\multirow{1}{*}{NWB}}  &2.19e-09 & 1.52e-08 &1.27e-09 &1.22e-07 &3.65e-10 &7.86e-08 \\ \hline
{\multirow{2}{*}{$0.01$}}& \multicolumn{1}{c|}{\multirow{1}{*}{WB}}  &0 & 0 &2.17e-23 &6.78e-21 &6.49e-17 &4.42e-16 \\ \cline{2-8}
& \multicolumn{1}{c|}{\multirow{1}{*}{NWB}}  &2.48e-09 & 1.61e-08 &1.57e-09 &1.17e-07 &4.62e-10 &7.52e-08 \\ \hline
\end{tabular}
\end{center}
\end{table}

\begin{figure}[ht!]
\centerline{\includegraphics[trim=0.8cm 0.25cm 1.1cm 0.2cm,clip,width=4.0cm]{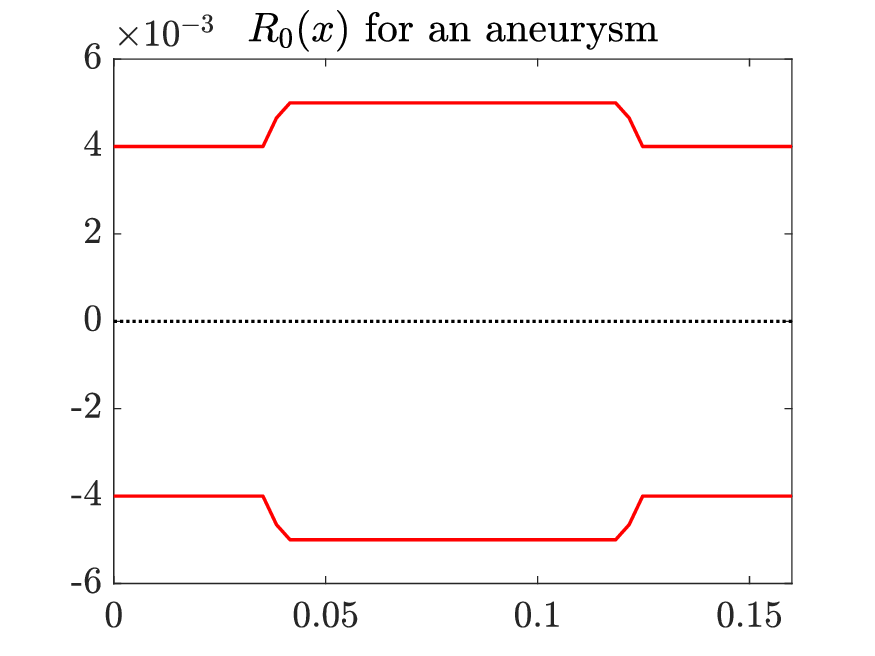}\hspace*{0.15cm}
\includegraphics[trim=0.8cm 0.25cm 1.1cm 0.2cm,clip,width=4.0cm]{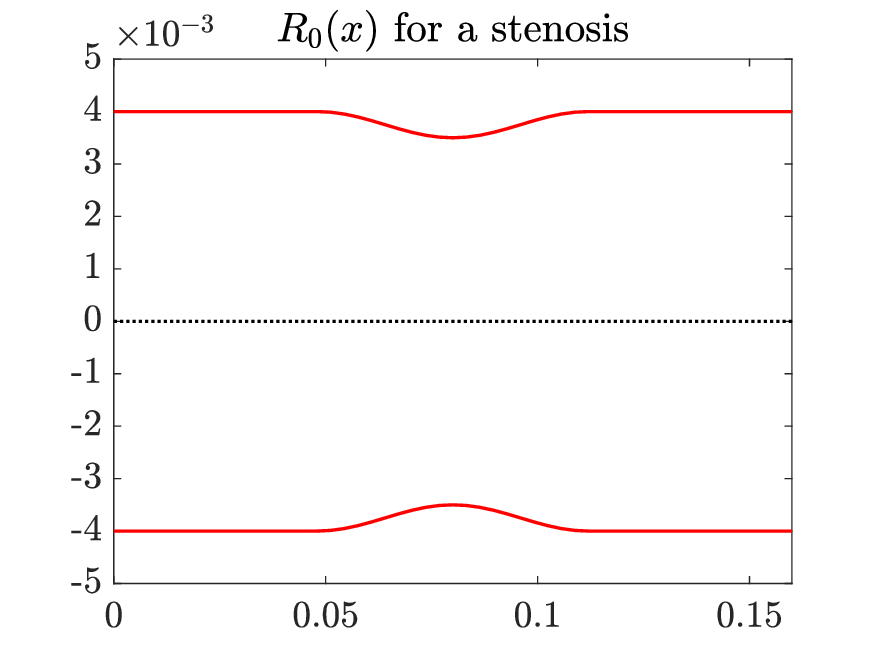}\hspace*{0.15cm}
\includegraphics[trim=0.8cm 0.25cm 1.1cm 0.2cm,clip,width=4.0cm]{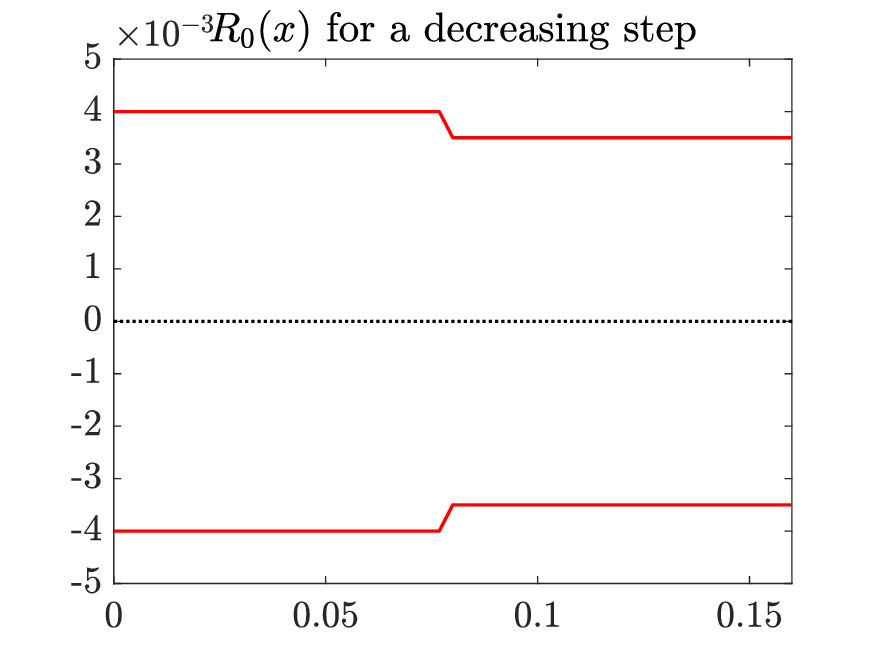}}
\caption{\sf Example 4: Radii at rest for the artery with an aneurysm (left), a stenosis (middle), and a decreasing step (right). \label{Ex4_fig1}}
\end{figure}

\subsection*{Example 5---Small perturbation of a ``moving-blood'' \vla{steady-state solution} in an aneurysm}
In the fifth example, we demonstrate that the ability of the proposed arbitrarily high-order schemes in handling small perturbation to ``moving-blood'' steady state flow in an aneurysm. The background steady-state solution is given in Example 4, Test 1. We will denote the steady-state cross-sectional area as $A_{\rm eq}(x)$ and the steady-state discharge as $Q_{\rm eq}(x)$ here. We then introduce the perturbed initial conditions as follows:
\begin{equation*}
  A(x,0)=A_{\rm eq}(x)+\left\{\begin{aligned}
  &\varepsilon\pi\cos^2\Big(\frac{125\pi}{2}x\Big) &&\mbox{if}~ x\in[0.072, 0.088],\\
  &0 &&\mbox{otherwise},
  \end{aligned}\right. \quad Q(x,0)=Q_{\rm eq}(x),
\end{equation*}
defined within a computational domain $[0,0.16]$. Here, $\varepsilon=2.5\times10^{-9}$ is a small magnitude number.

We compute the numerical solution by the third-, fourth-, and fifth-order schemes with either 50 or 200 uniform cells. The time snapshots of solution at $t=0.0025$ and $t=0.005$ are presented in Figures \ref{Ex5_fig1}, \ref{Ex5_fig2}, and \ref{Ex5_fig3}, corresponding to $S_{\rm in}=0.5$, $0.1$, and $0.01$, respectively. As one can see, the proposed arbitrarily high-order well-balanced schemes accurately capture the perturbations, consistent with results reported in \cite{BX_blood}. Differences between the schemes are more pronounced with coarse mesh resolutions but diminish as the mesh is refined. \vla{In contrast, when examining the results obtained from the non well-balanced schemes, it is evident that in the latter two cases with lower Shapiro numbers, the third-order non well-balanced scheme fails to accurately resolve small perturbations, even with a much finer mesh.}

\begin{figure}[ht!]
\color{red}
\centerline{\subfigure[WB, $N=50$]{\includegraphics[trim=0.8cm 0.25cm 1.1cm 0.2cm,clip,width=3.2cm]{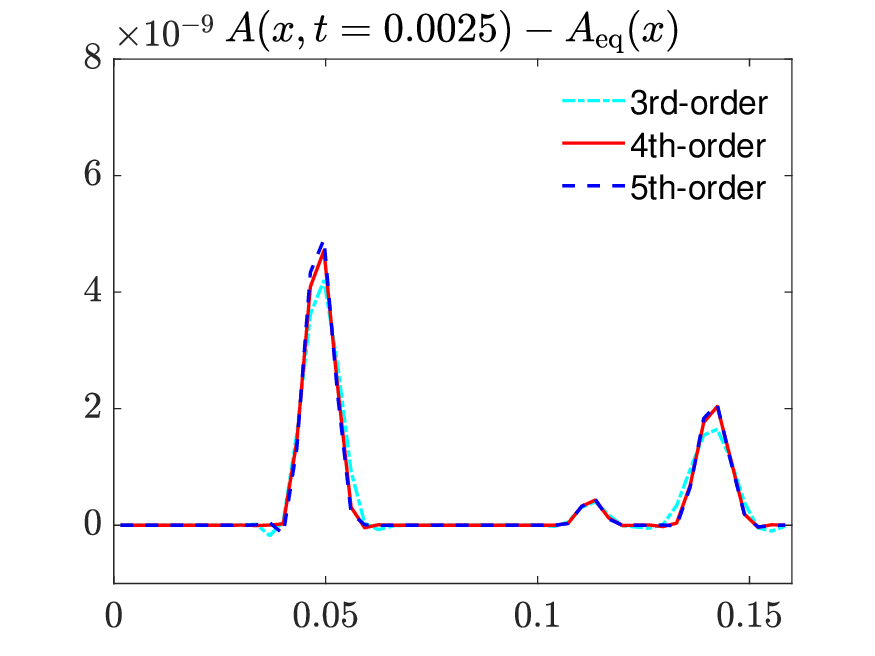}}\hspace*{0.005cm}
\subfigure[NWB, $N=50$]{\includegraphics[trim=0.8cm 0.25cm 1.1cm 0.2cm,clip,width=3.2cm]{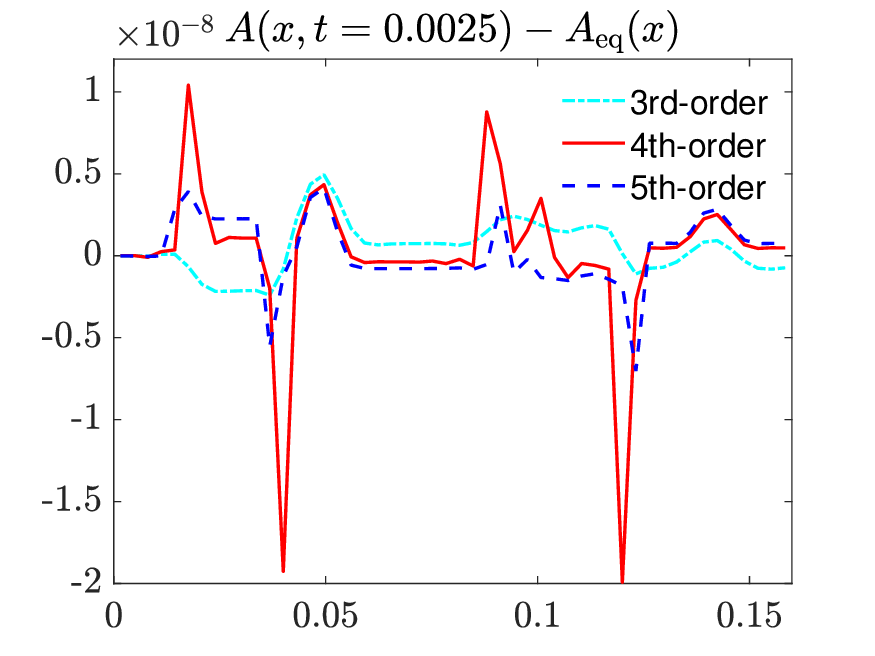}}\hspace*{0.005cm}
\subfigure[WB, $N=50$]{\includegraphics[trim=0.8cm 0.25cm 1.1cm 0.2cm,clip,width=3.2cm]{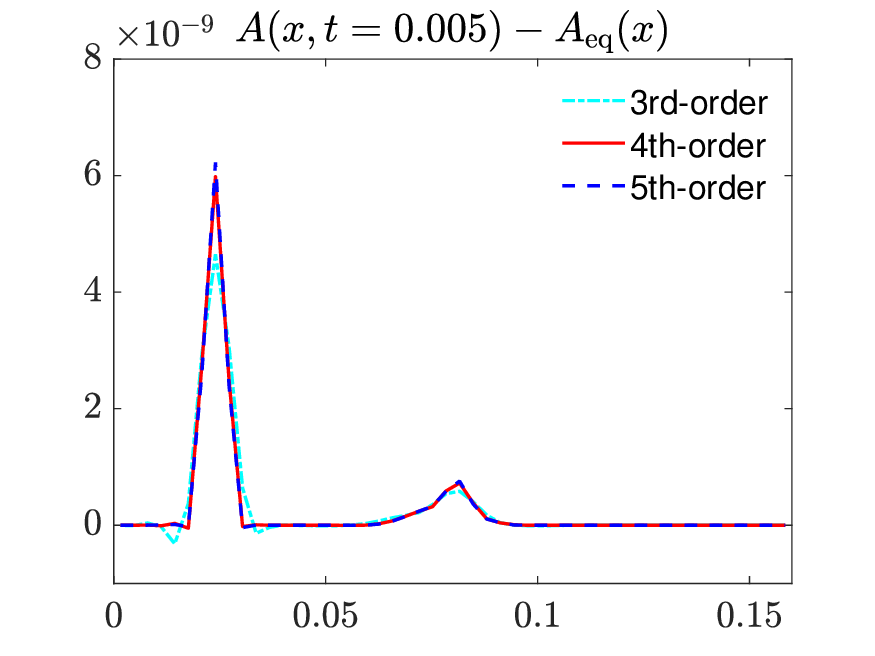}}\hspace*{0.005cm}
\subfigure[NWB, $N=50$]{\includegraphics[trim=0.8cm 0.25cm 1.1cm 0.2cm,clip,width=3.2cm]{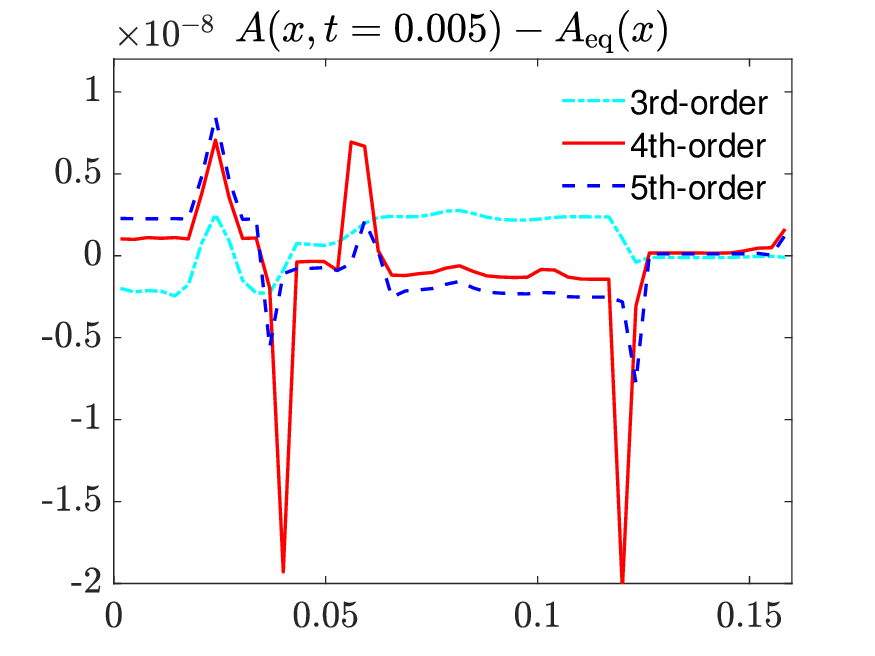}}}
\vskip5pt
\centerline{\subfigure[WB, $N=200$]{\includegraphics[trim=0.8cm 0.25cm 1.1cm 0.2cm,clip,width=3.2cm]{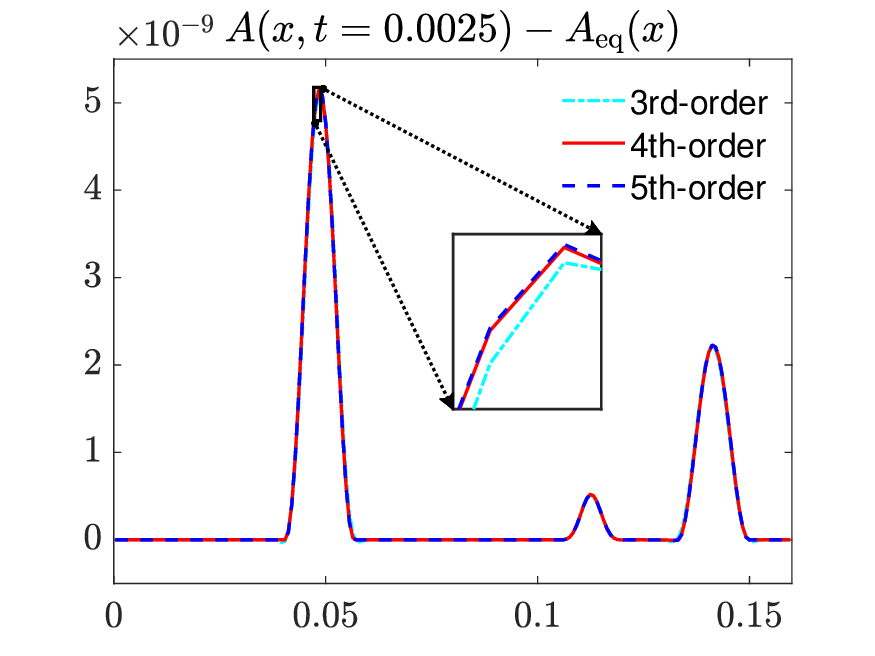}}\hspace*{0.005cm}
\subfigure[NWB, $N=200$]{\includegraphics[trim=0.8cm 0.25cm 1.1cm 0.2cm,clip,width=3.2cm]{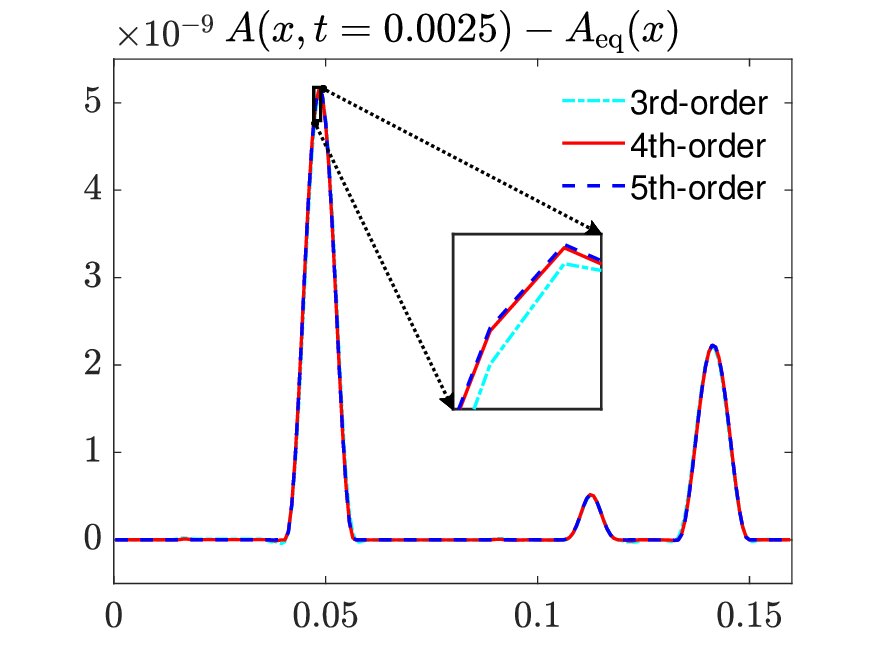}}\hspace*{0.005cm}
\subfigure[WB, $N=200$]{\includegraphics[trim=0.8cm 0.25cm 1.1cm 0.2cm,clip,width=3.2cm]{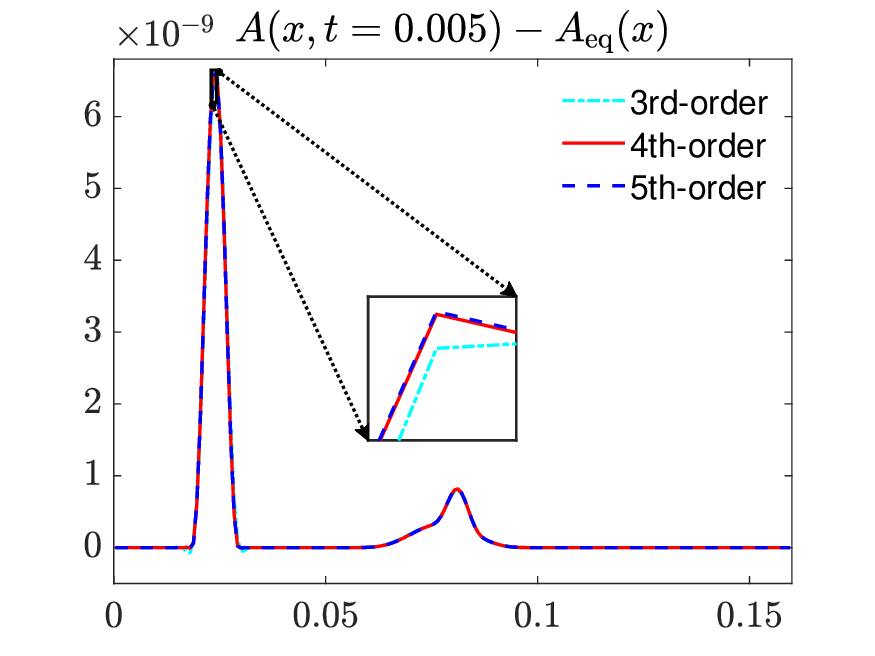}}\hspace*{0.005cm}
\subfigure[NWB, $N=200$]{\includegraphics[trim=0.8cm 0.25cm 1.1cm 0.2cm,clip,width=3.2cm]{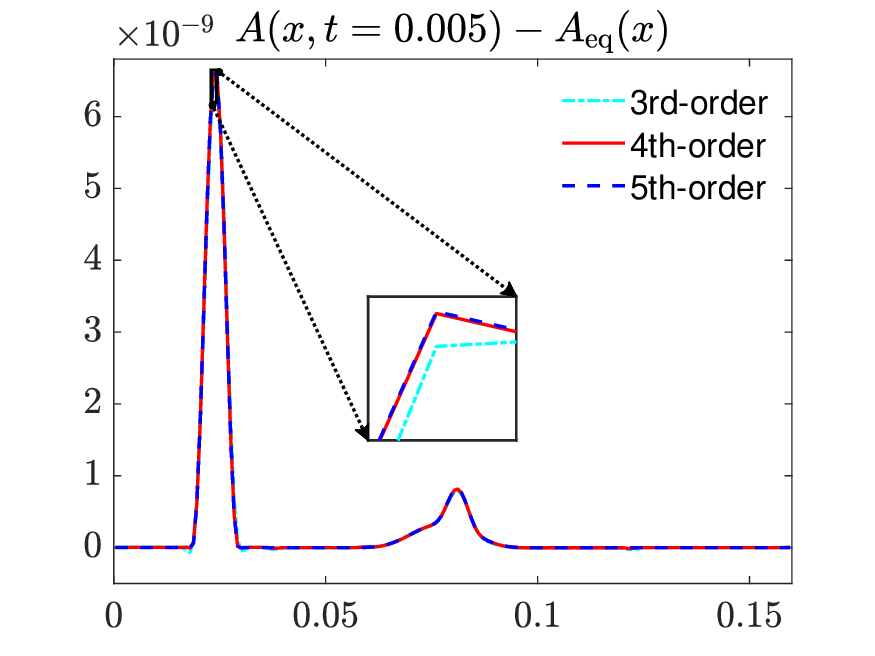}}}
\caption{\sf Example 5: Time snapshots ($t=0.0025$ and $t=0.005$) of the difference $A(x,t)-A_{\rm eq}(x)$ for $S_{\rm in}=0.5$ computed by WB and non WB schemes.\label{Ex5_fig1}}
\end{figure}

\begin{figure}[ht!]
\color{red}
\centerline{\subfigure[WB, $N=50$]{\includegraphics[trim=0.8cm 0.25cm 1.1cm 0.2cm,clip,width=3.2cm]{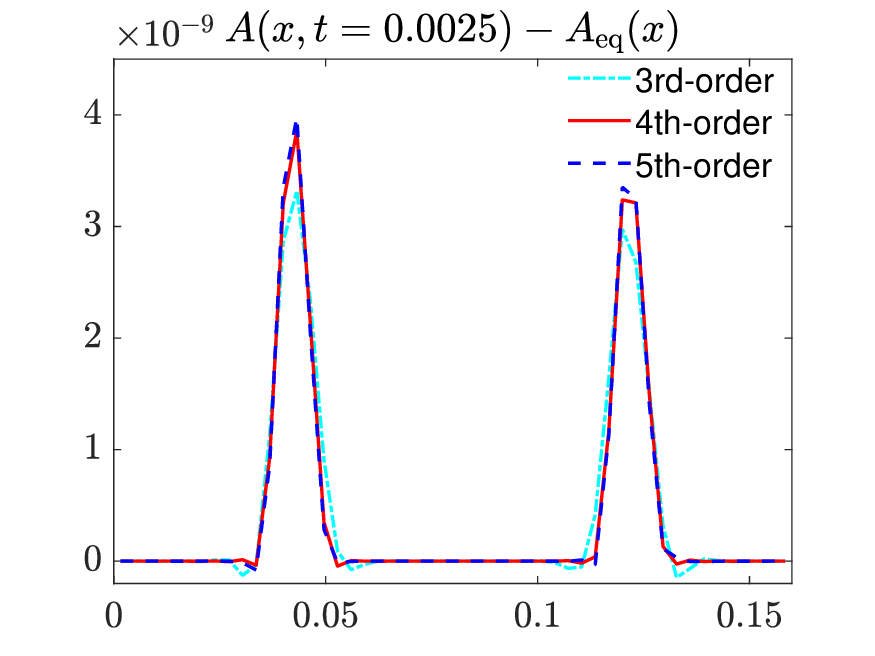}}\hspace*{0.005cm}
\subfigure[NWB, $N=50$]{\includegraphics[trim=0.8cm 0.25cm 1.1cm 0.2cm,clip,width=3.2cm]{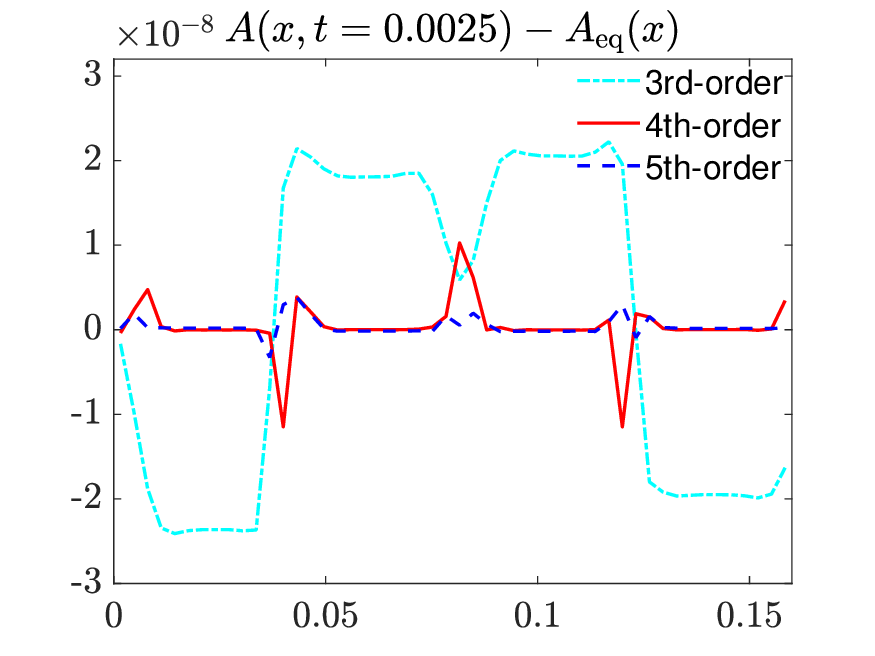}}\hspace*{0.005cm}
\subfigure[WB, $N=50$]{\includegraphics[trim=0.8cm 0.25cm 1.1cm 0.2cm,clip,width=3.2cm]{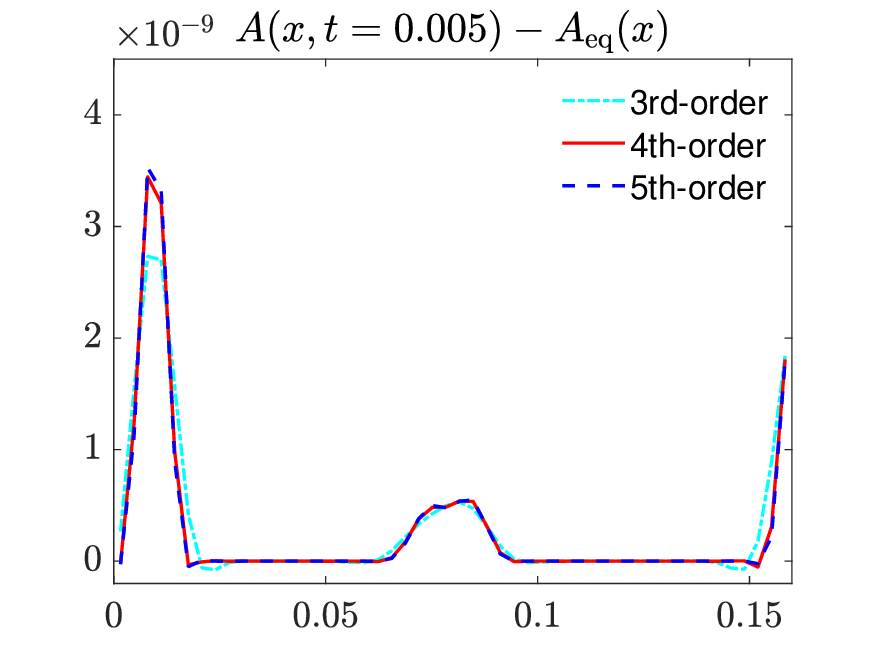}}\hspace*{0.005cm}
\subfigure[NWB, $N=50$]{\includegraphics[trim=0.8cm 0.25cm 1.1cm 0.2cm,clip,width=3.2cm]{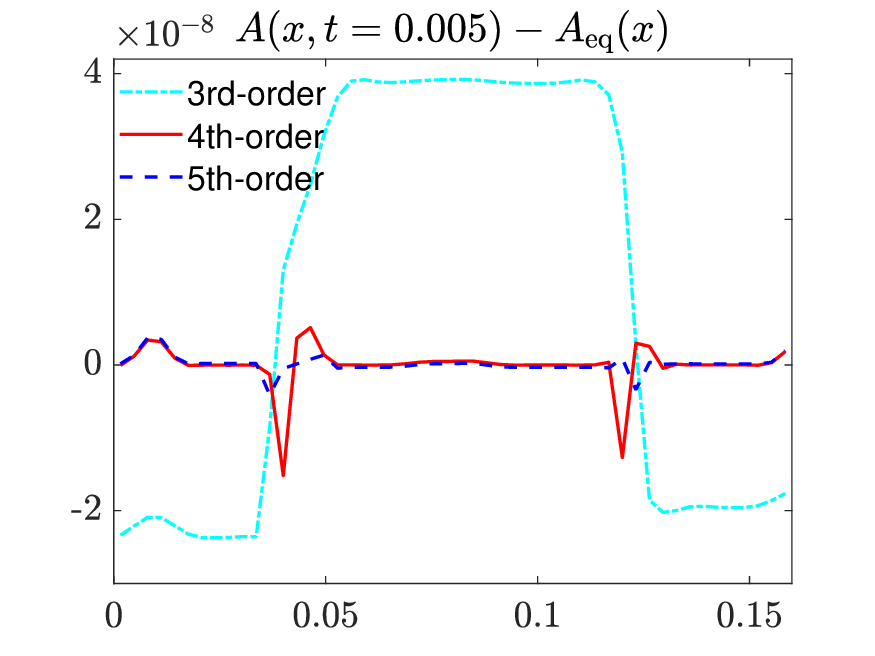}}}
\vskip5pt
\centerline{\subfigure[WB, $N=200$]{\includegraphics[trim=0.8cm 0.25cm 1.1cm 0.2cm,clip,width=3.2cm]{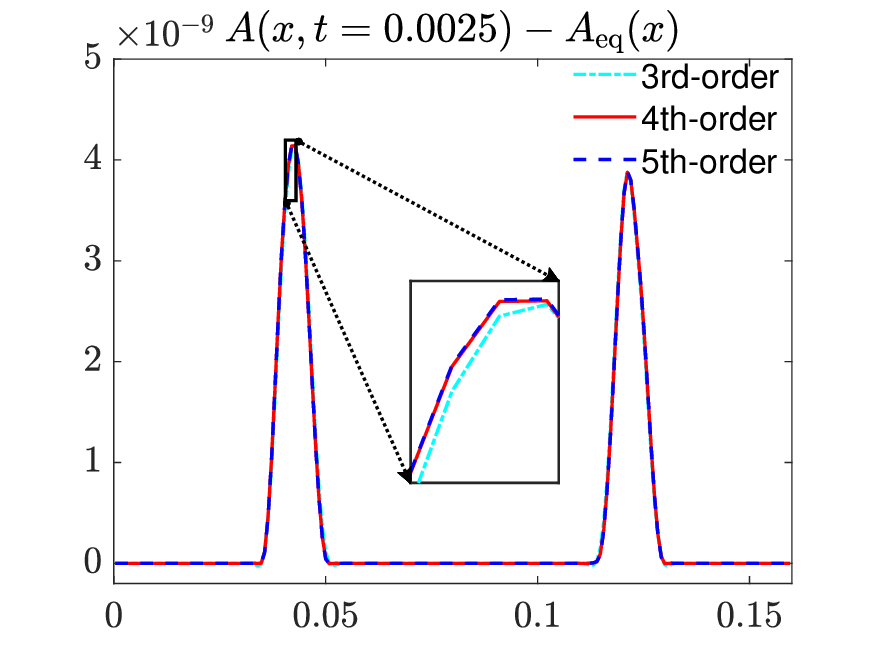}}\hspace*{0.005cm}
\subfigure[NWB, $N=200$]{\includegraphics[trim=0.8cm 0.25cm 1.1cm 0.2cm,clip,width=3.2cm]{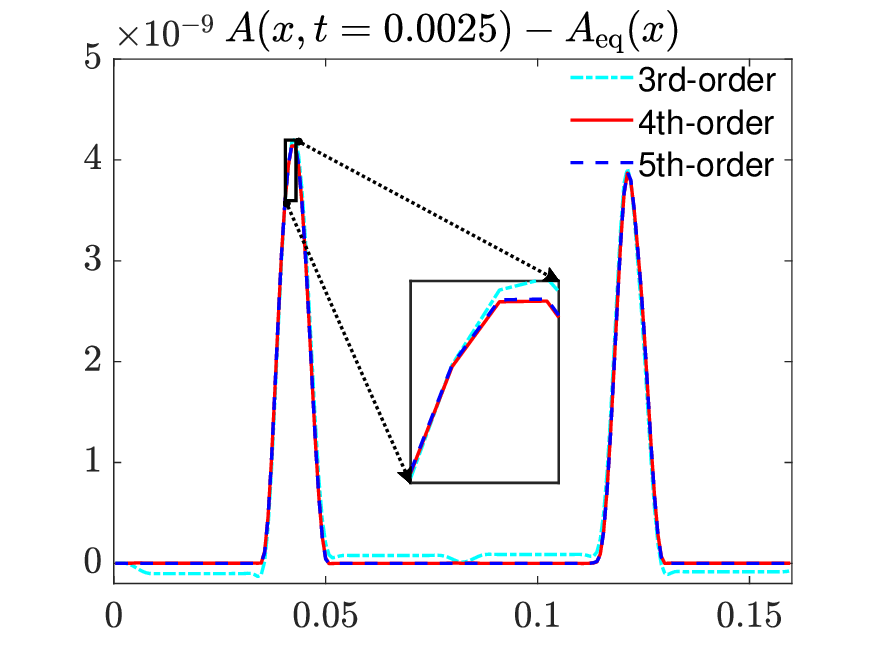}}\hspace*{0.005cm}
\subfigure[WB, $N=200$]{\includegraphics[trim=0.8cm 0.25cm 1.1cm 0.2cm,clip,width=3.2cm]{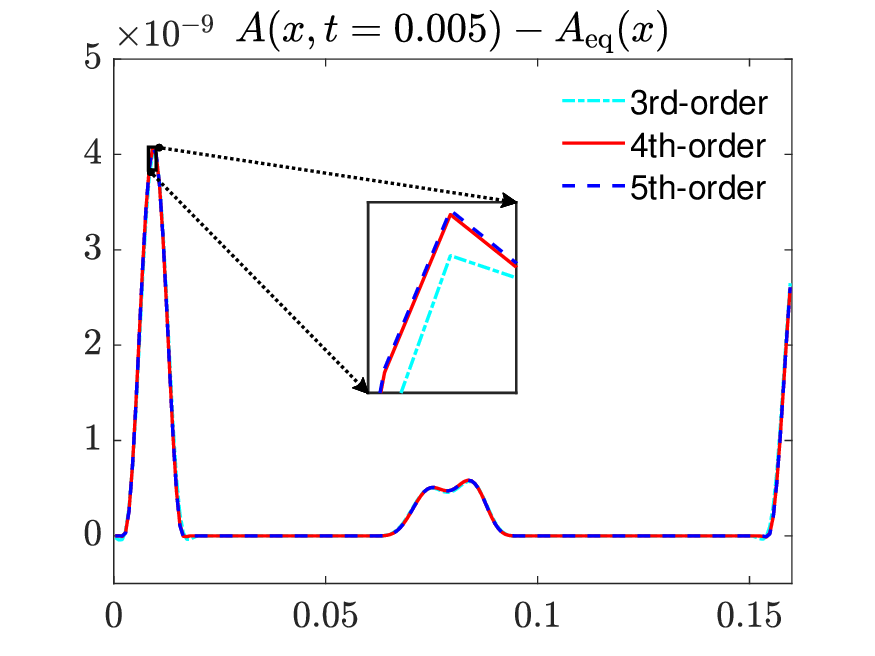}}\hspace*{0.005cm}
\subfigure[NWB, $N=200$]{\includegraphics[trim=0.8cm 0.25cm 1.1cm 0.2cm,clip,width=3.2cm]{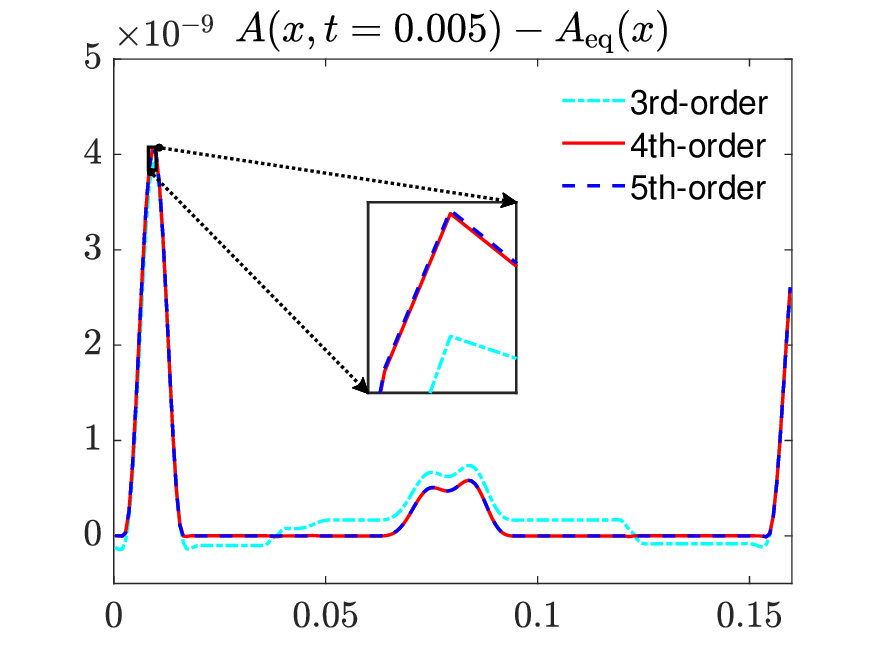}}}
\caption{\sf Example 5: Same as in Figure \ref{Ex5_fig1} but for $S_{\rm in}=0.1$.\label{Ex5_fig2}}
\end{figure}

\begin{figure}[ht!]
\color{red}
\centerline{\subfigure[WB, $N=50$]{\includegraphics[trim=0.8cm 0.25cm 1.1cm 0.2cm,clip,width=3.2cm]{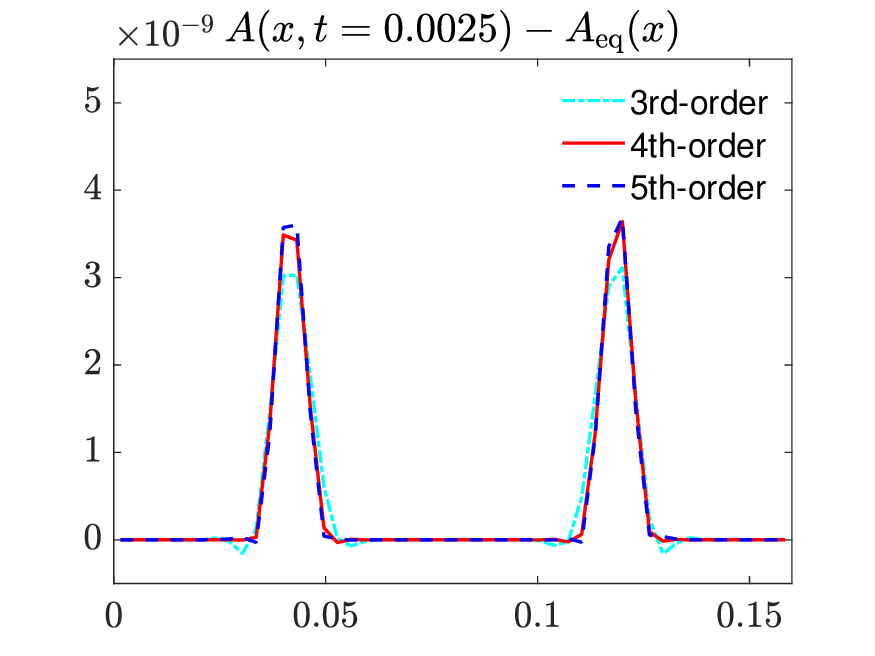}}\hspace*{0.005cm}
\subfigure[NWB, $N=50$]{\includegraphics[trim=0.8cm 0.25cm 1.1cm 0.2cm,clip,width=3.2cm]{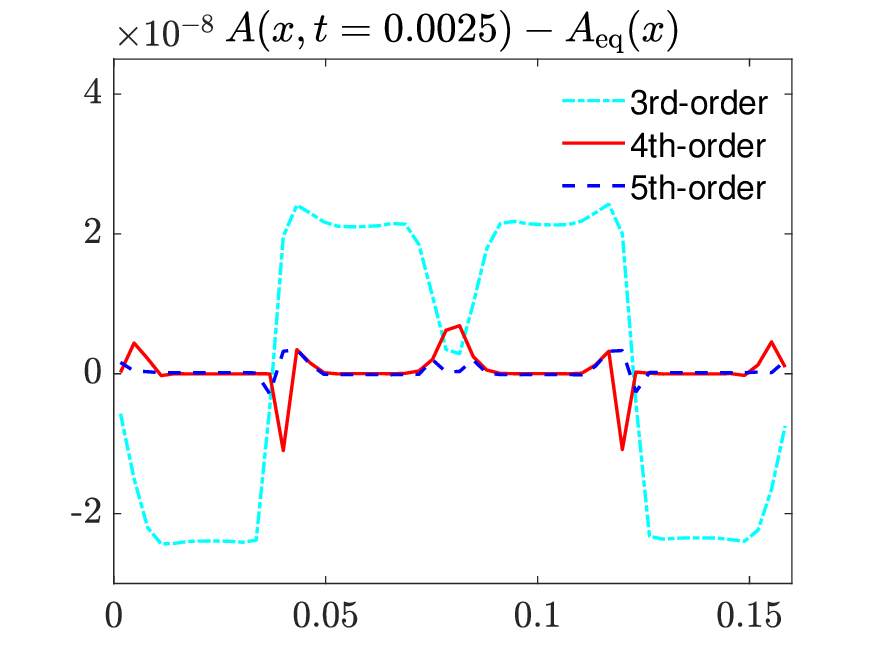}}\hspace*{0.005cm}
\subfigure[WB, $N=50$]{\includegraphics[trim=0.8cm 0.25cm 1.1cm 0.2cm,clip,width=3.2cm]{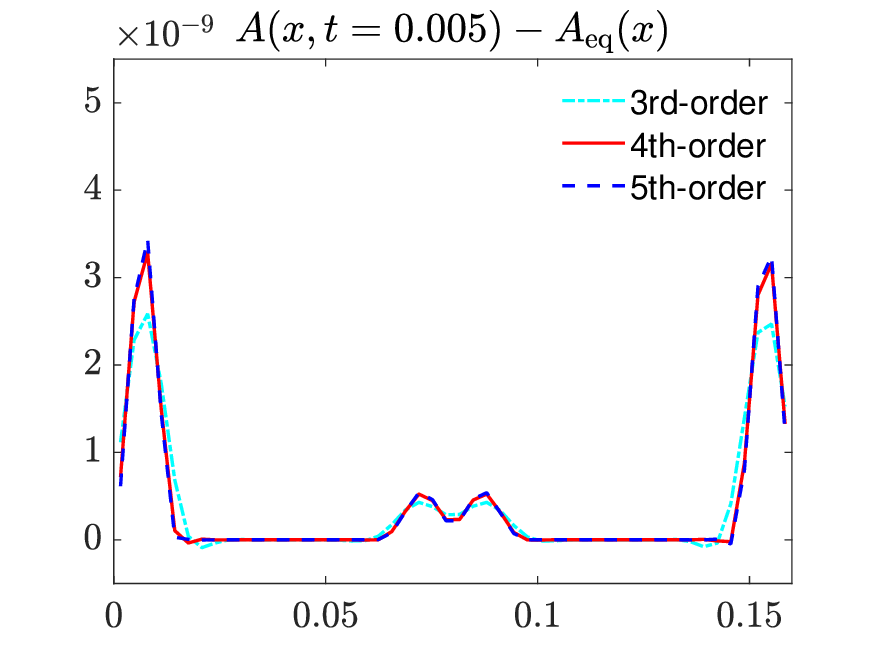}}\hspace*{0.005cm}
\subfigure[NWB, $N=50$]{\includegraphics[trim=0.8cm 0.25cm 1.1cm 0.2cm,clip,width=3.2cm]{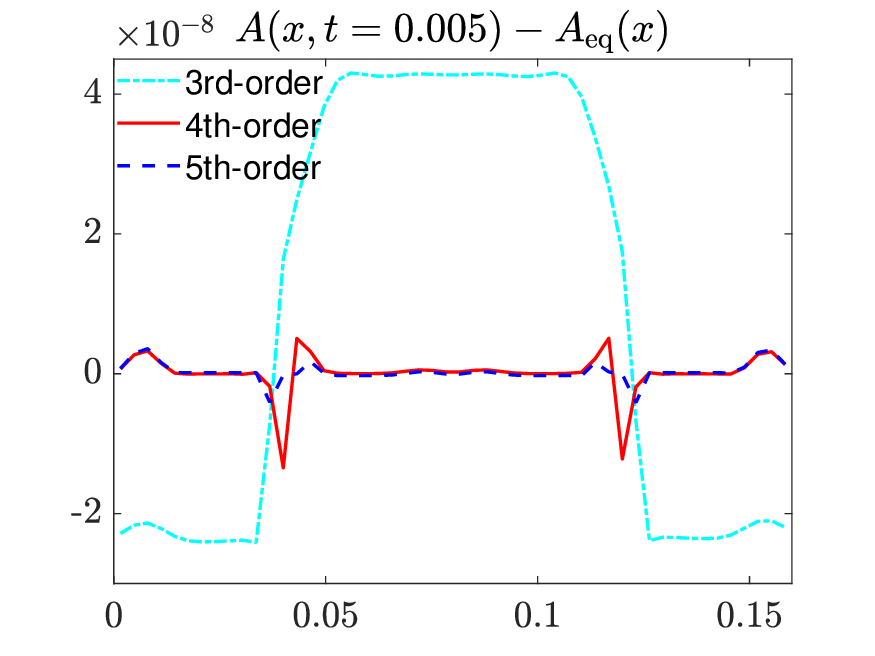}}}
\vskip5pt
\centerline{\subfigure[WB, $N=200$]{\includegraphics[trim=0.8cm 0.25cm 1.1cm 0.2cm,clip,width=3.2cm]{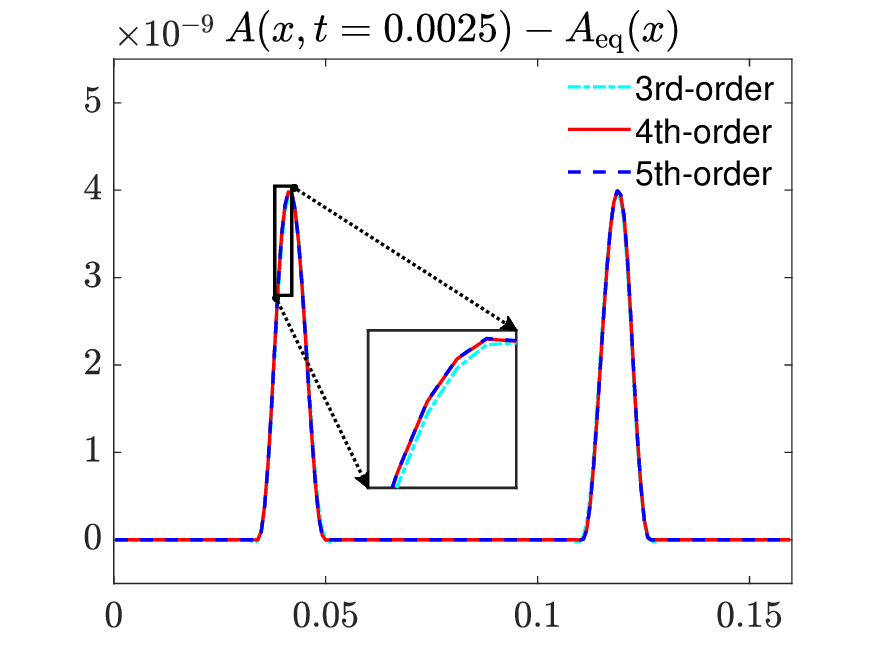}}\hspace*{0.005cm}
\subfigure[NWB, $N=200$]{\includegraphics[trim=0.8cm 0.25cm 1.1cm 0.2cm,clip,width=3.2cm]{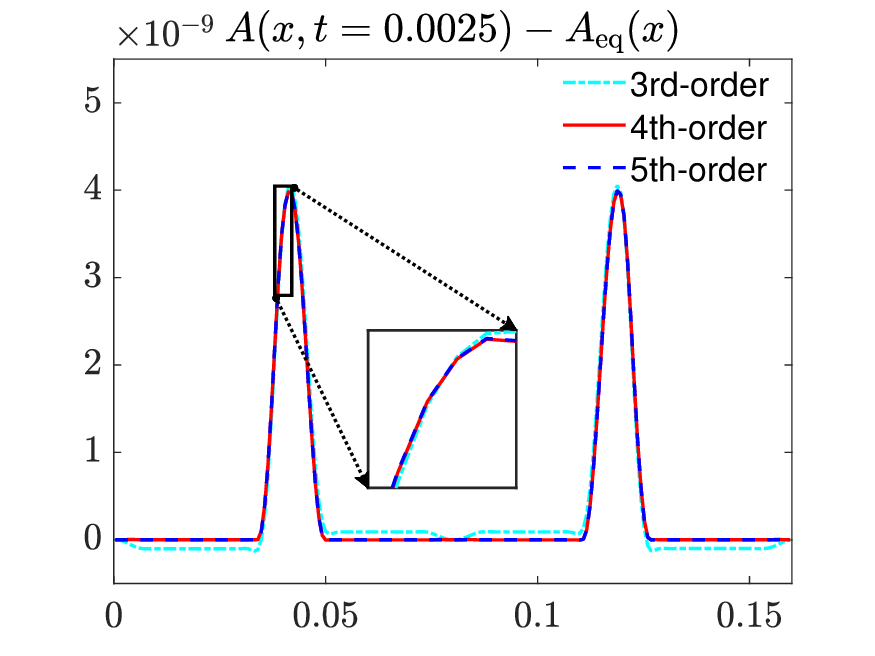}}\hspace*{0.005cm}
\subfigure[WB, $N=200$]{\includegraphics[trim=0.8cm 0.25cm 1.1cm 0.2cm,clip,width=3.2cm]{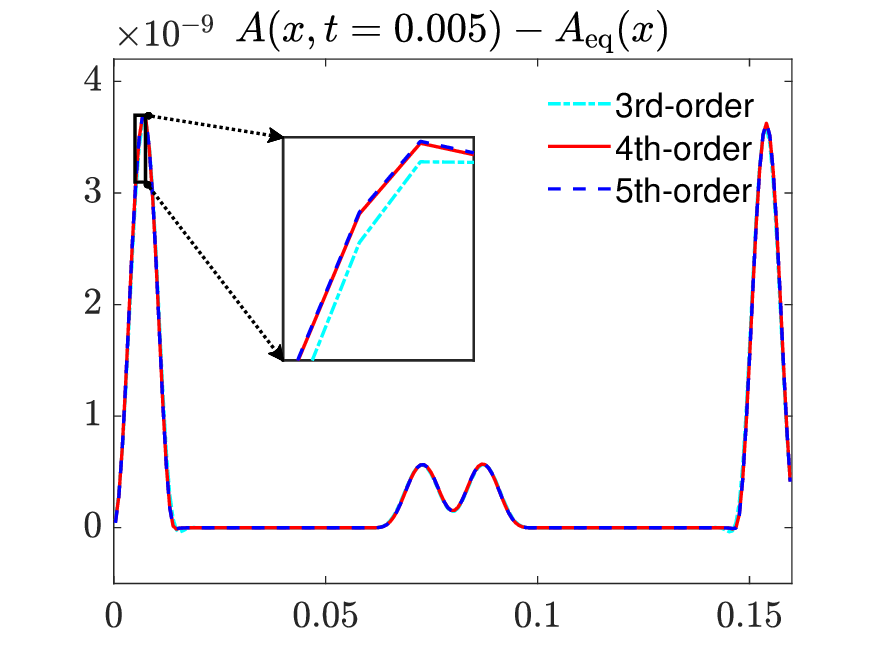}}\hspace*{0.005cm}
\subfigure[NWB, $N=200$]{\includegraphics[trim=0.8cm 0.25cm 1.1cm 0.2cm,clip,width=3.2cm]{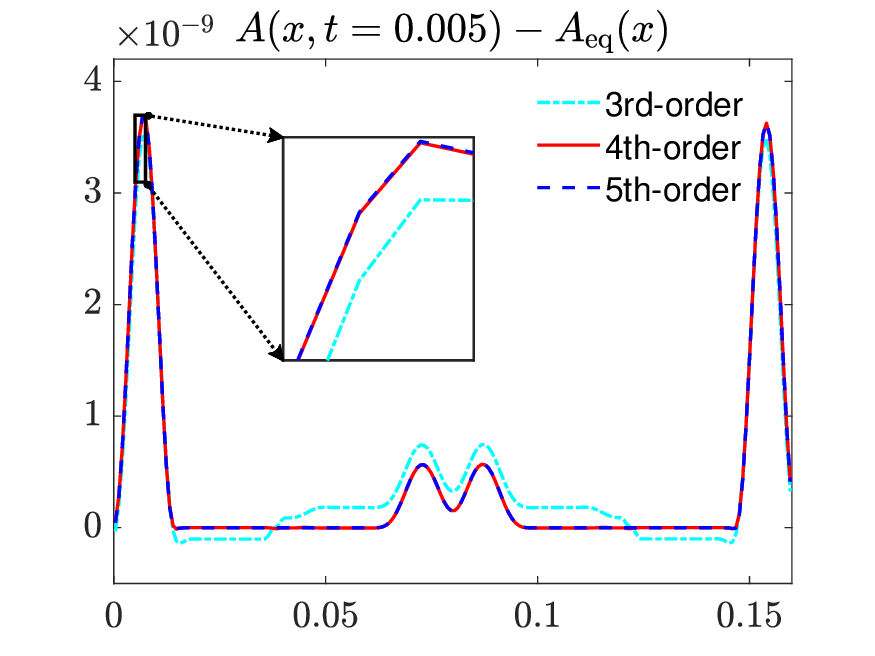}}}
\caption{\sf Example 5: Same as in Figures \ref{Ex5_fig1} and \ref{Ex5_fig2} but for $S_{\rm in}=0.01$.\label{Ex5_fig3}}
\end{figure}

\subsection*{Example 6---An ideal tourniquet problem}
In the sixth example, we investigate the dynamics of an ideal tourniquet problem in blood flow. This is a scenario where a tourniquet is promptly applied and then subsequently removed. It is similar to the dam break problem in shallow water equations and the Sod tube problem in compressible gas dynamics. The computational domain spans $[-0.04,0.04]$, with the arterial stiffness number set to $\kappa=10^7$, and the cross-sectional area at rest set as $A_0(x)=0$. The initial conditions are given by
\begin{equation*}
  A(x,0)=\left\{
  \begin{aligned}
  &\pi(5\times10^{-3})^2,&&x\in[-0.04,0],\\
  &\pi(4\times10^{-3})^2,&&x\in[0,0.04],
  \end{aligned}\right.
  \quad Q(x,0)=0.
\end{equation*}
We compute the numerical solutions using the proposed third-, fourth-, and fifth-order schemes up to a final time $t=0.005$, employing a mesh size consisting of 50 uniform cells. Additionally, \bla{an exact solution is also given on the same grids}. Figure \ref{Ex6_fig1} presents the solutions, showcasing the well capture of the right-going shock wave and the left-going rarefaction. Moreover, the superiority of the high-order scheme becomes apparent upon closer inspection of the smooth rarefaction waves in the zoomed-in views.

\begin{figure}[ht!]
\centerline{\subfigure[A(x,t)]{\includegraphics[trim=0.8cm 0.25cm 0.9cm 0.01cm,clip,width=4.5cm]{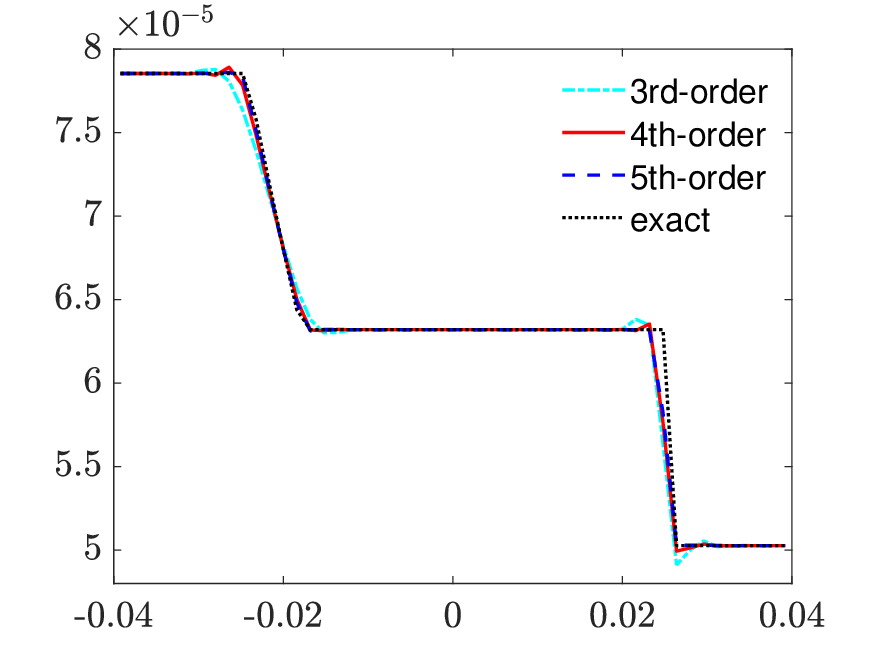}}\hspace*{0.45cm}
\subfigure[zoom of A(x,t)]{\includegraphics[trim=0.8cm 0.25cm 0.9cm 0.01cm,clip,width=4.5cm]{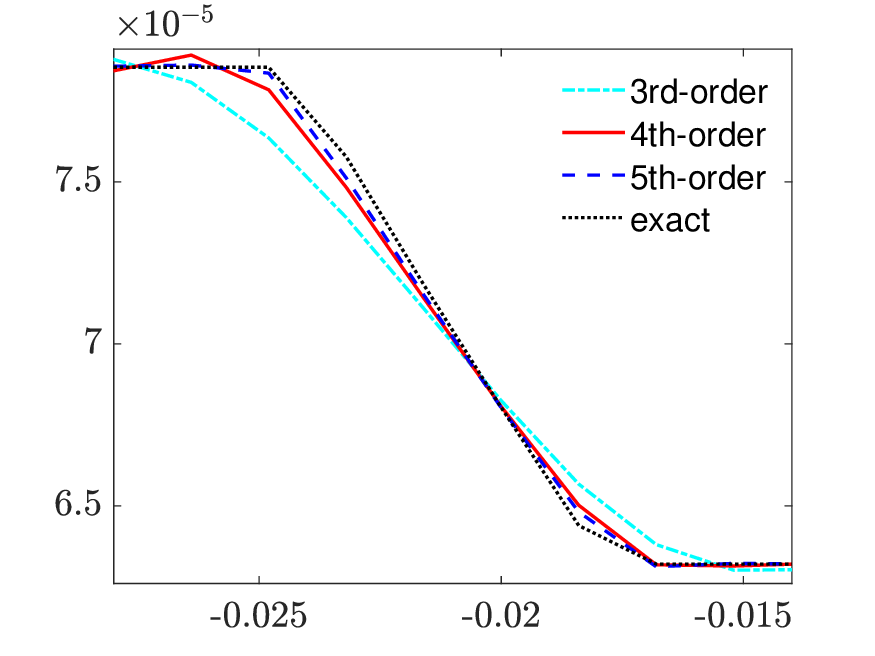}}}
\vskip5pt
\centerline{\subfigure[Q(x,t)]{\includegraphics[trim=0.8cm 0.25cm 0.9cm 0.01cm,clip,width=4.5cm]{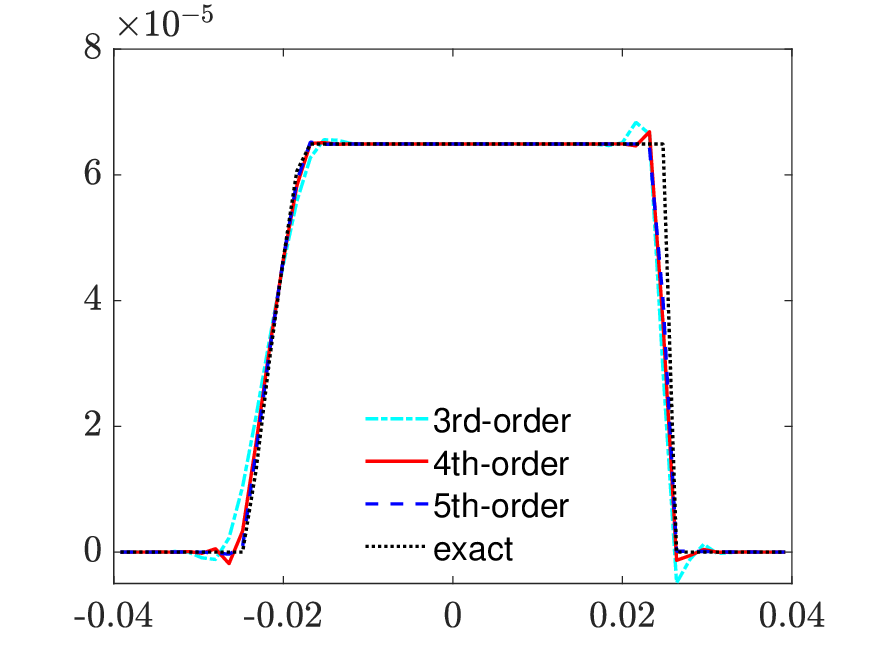}}\hspace*{0.45cm}
\subfigure[zoom of Q(x,t)]{\includegraphics[trim=0.8cm 0.25cm 0.9cm 0.01cm,clip,width=4.5cm]{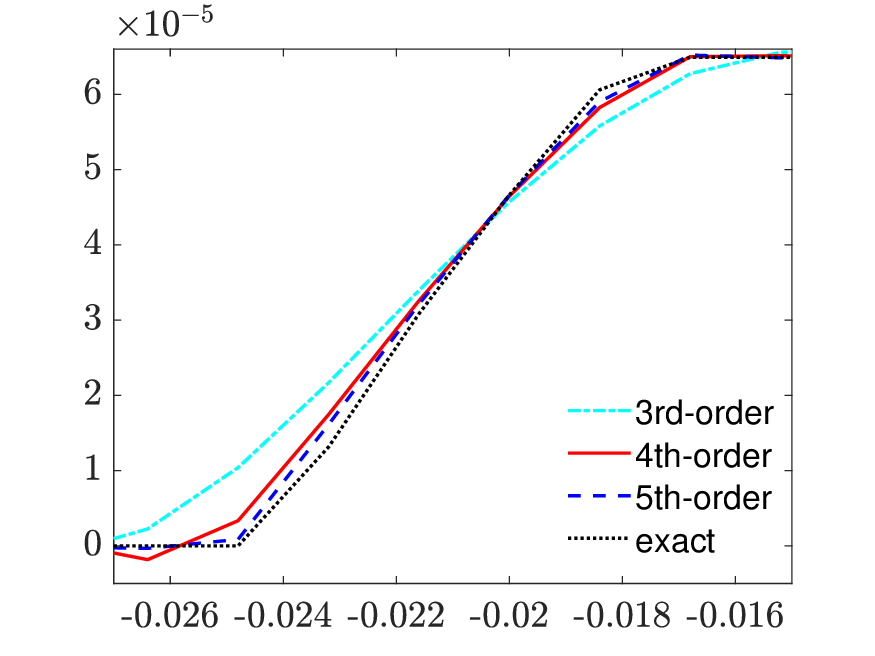}}}
\caption{\sf Example 6: Numerical solutions of the cross-sectional area $A(x,t)$ (top row) and discharge $Q(x,t)$ (bottom row). \label{Ex6_fig1}}
\end{figure}
 
\subsection*{Example 7---Riemann problems}
In the seventh example, we take the arterial stiffness number as $\kappa=3.31\times10^6$ and conduct numerical computations for two Riemann problems within the computational domain $[0,0.2]$. The cross-sectional area at rest is $A_0(x)=0$ and the initial conditions are given by 
\begin{equation}\label{Ex7_IC1}
A(x,0)=6.28\times10^{-4},\quad Q(x,0)=\left\{
  \begin{aligned}
  &-6.28\times10^{-4},&&x\in[0,0.1],\\
  &6.28\times10^{-4},&&x\in[0.1,0.2],
  \end{aligned}\right.
\end{equation}
and 
\begin{equation}\label{Ex7_IC2}
A(x,0)=6.28\times10^{-4},\quad Q(x,0)=\left\{
  \begin{aligned}
  &6.28\times10^{-4},&&x\in[0,0.1],\\
  &-6.28\times10^{-4},&&x\in[0.1,0.2].
  \end{aligned}\right.
\end{equation}
We perform numerical simulations using $100$ uniform cells for each of these two Riemann problems at final times $t=0.009$ and $t=0.012$, correspondingly. The computed results are presented in Figures \ref{Ex7_fig1} and \ref{Ex7_fig2}, along with \bla{the exact solutions computed on the same grids}. In Figure \ref{Ex7_fig1}, the solution contains two rarefaction waves. Upon closer examination in the zoomed-in sections (right column of Figure \ref{Ex7_fig1}), the superiority of high-order schemes is once again underscored. In Figure \ref{Ex7_fig2}, the solution consists of two shock waves which are well captured without generating spurious oscillations by all three schemes.

\begin{figure}[ht!]
\centerline{\subfigure[A(x,t)]{\includegraphics[trim=0.8cm 0.25cm 0.9cm 0.01cm,clip,width=4.5cm]{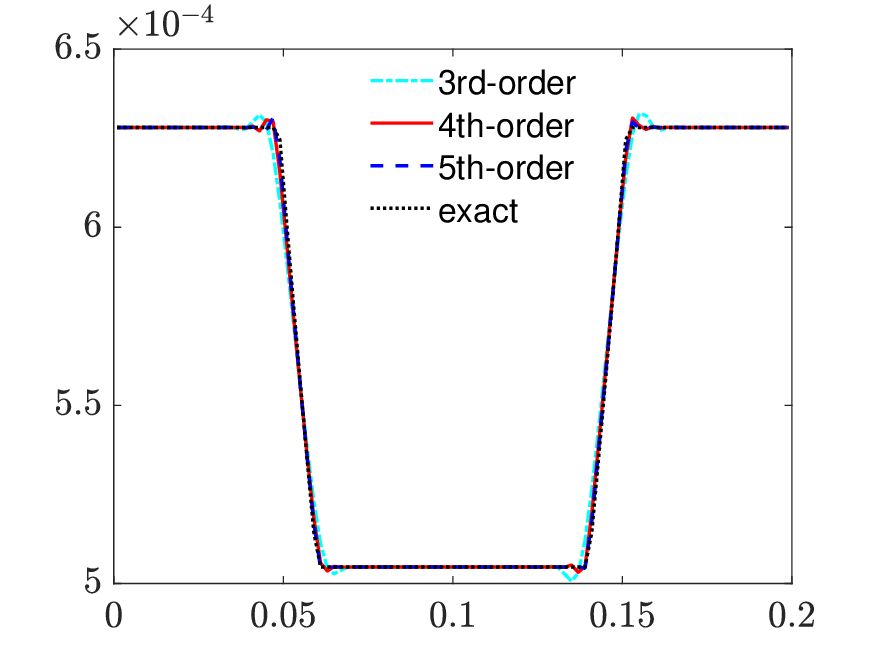}}\hspace*{0.45cm}
\subfigure[zoom of A(x,t)]{\includegraphics[trim=0.8cm 0.25cm 0.9cm 0.01cm,clip,width=4.5cm]{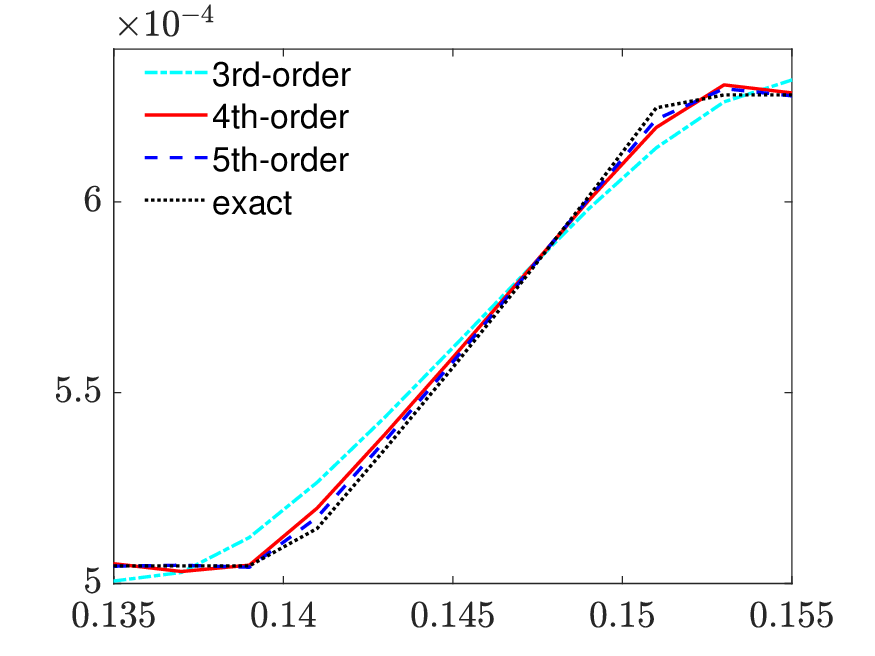}}}
\vskip5pt
\centerline{\subfigure[Q(x,t)]{\includegraphics[trim=0.8cm 0.25cm 0.9cm 0.01cm,clip,width=4.5cm]{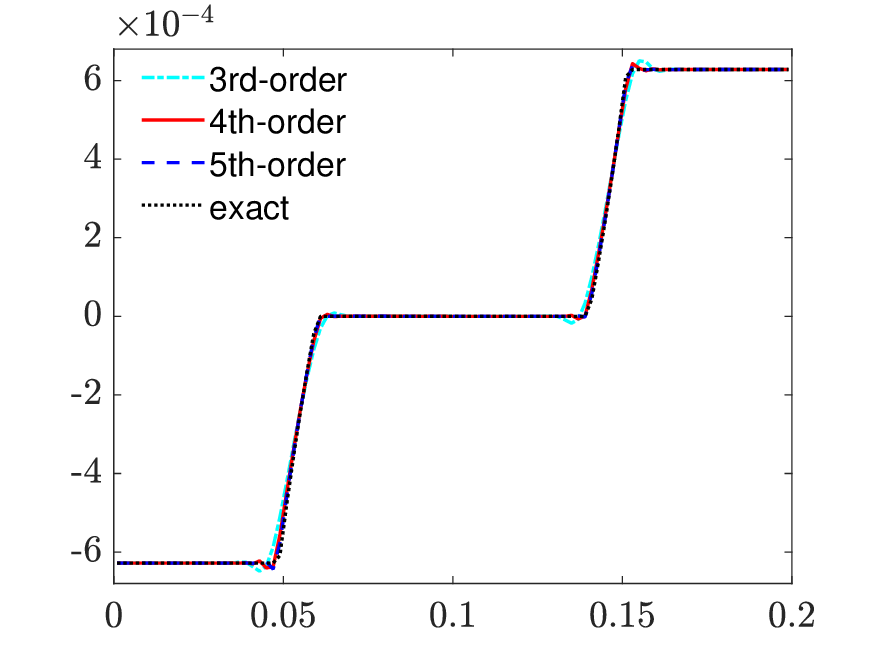}}\hspace*{0.45cm}
\subfigure[zoom of Q(x,t)]{\includegraphics[trim=0.8cm 0.25cm 0.9cm 0.01cm,clip,width=4.5cm]{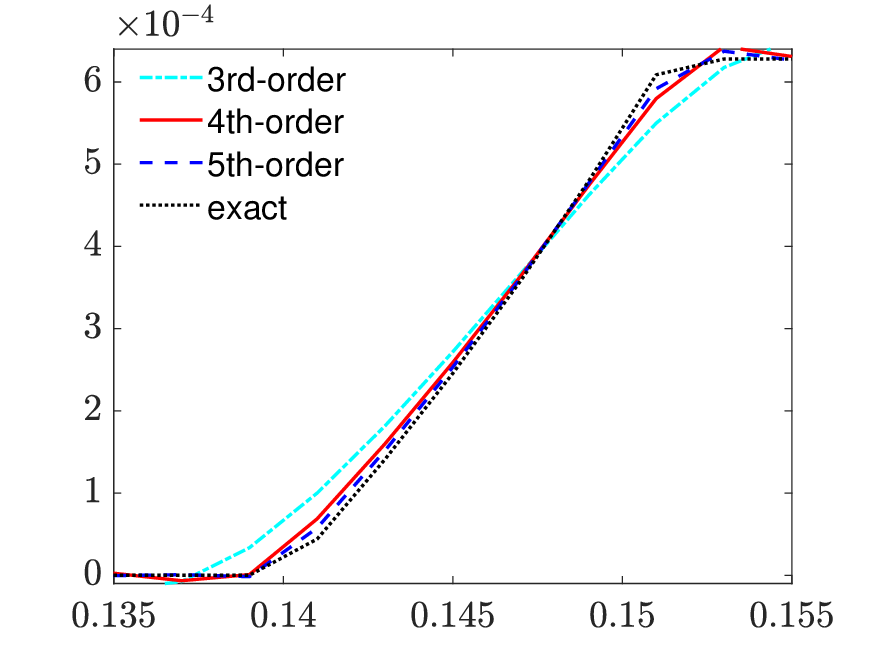}}}
\caption{\sf Example 7: Numerical solutions of the cross-sectional area $A(x,t)$ (top row) and discharge $Q(x,t)$ (bottom row) for initial conditions \eref{Ex7_IC1}. \label{Ex7_fig1}}
\end{figure}

\begin{figure}[ht!]
\centerline{\subfigure[A(x,t)]{\includegraphics[trim=0.8cm 0.25cm 0.9cm 0.01cm,clip,width=4.5cm]{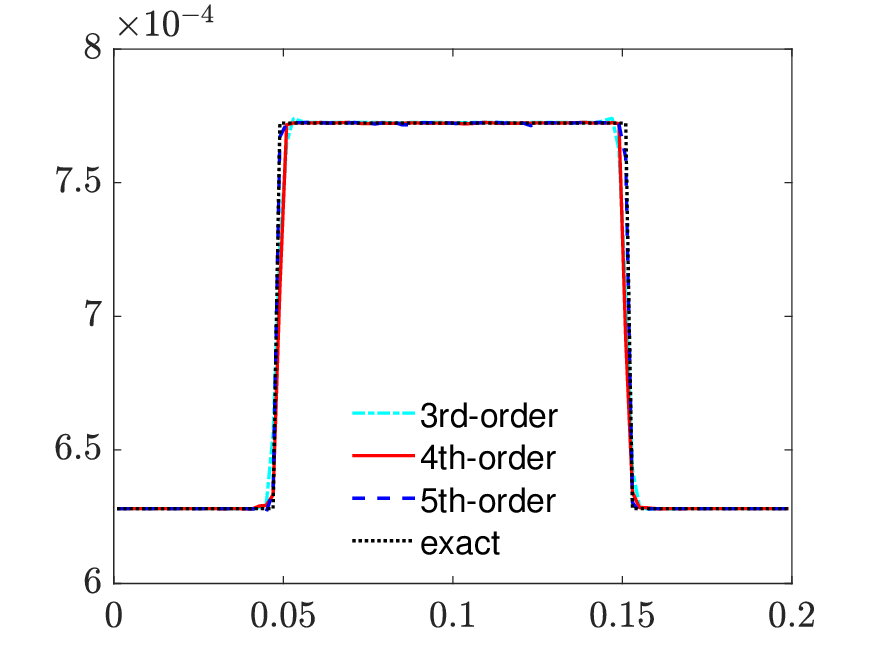}}\hspace*{0.45cm}
\subfigure[Q(x,t)]{\includegraphics[trim=0.8cm 0.25cm 0.9cm 0.01cm,clip,width=4.5cm]{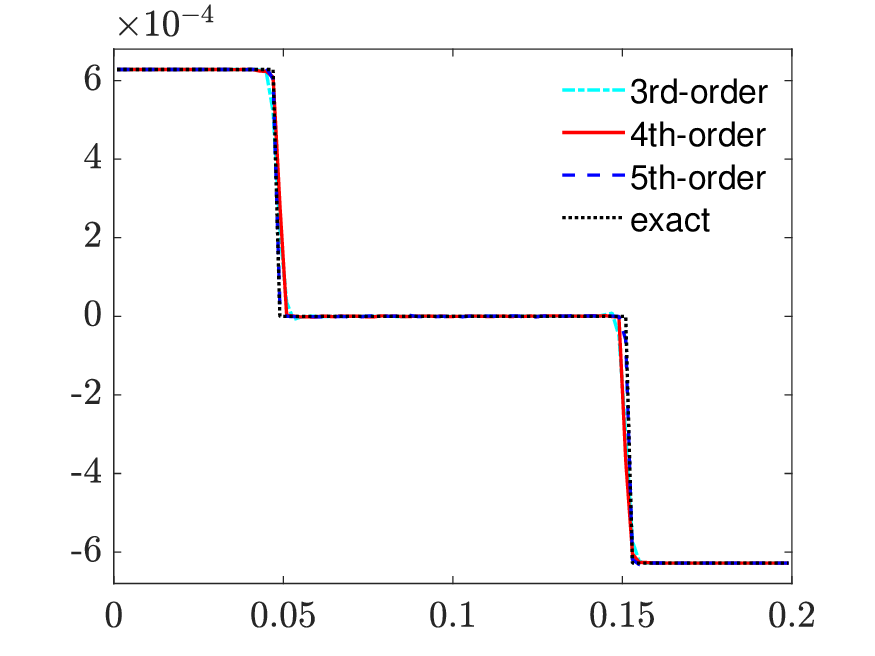}}}
\caption{\sf Example 7: Numerical solutions of the cross-sectional area $A(x,t)$ (left) and discharge $Q(x,t)$ (right) for initial conditions \eref{Ex7_IC2}. \label{Ex7_fig2}}
\end{figure}

{\color{red}
\subsection*{Example 8---Smooth steady-state solution in an artery with space-varying parameters}
In the eighth example, we consider a smooth steady-state solution in the domain $[0,5]$ with the following geometrical and mechanical parameters:
\begin{equation*}
  \mathcal{K}(x)=58725+100\mathcal{E}(x),\quad A_0(x)=0.0005+0.0001\mathcal{E}(x),\quad  p_{\rm ext}(x)=10000+100\mathcal{E}(x),
\end{equation*}
where $\mathcal{E}(x)=e^{-10(x-2.5)^2}$. The initial data is a subcritical stationary solution satisfying the steady-state condition \eref{s1}, given by
\begin{equation*}
\begin{aligned}
  &Q_{\rm s}=A(x)u(x)=1.0228\times10^{-3},\\
   &E_{\rm s}=\frac{(Q_{\rm s})^2}{2(A(x))^2}+\frac{1}{\rho}\Big(\mathcal{K}(x)\phi\big(\frac{A(x)}{A_0(x)}\big)+p_{\rm ext}(x)\Big)=\frac{1}{2}+\frac{1}{\rho}\Big(\mathcal{K}(0)\phi\big(\frac{Q_{\rm s}}{A_0(0)}\big)+p_{\rm ext}(0)\Big).
   \end{aligned}
\end{equation*}
 
We first check the ability of the high-order well-balanced schemes to exactly preserve the smooth steady-state solution. We compute the numerical solution using both WB and non WB schemes until a final time $t=5$ on a uniform mesh with $50$ cells. As shown in Table \ref{tab8}, only the WB methods maintain the steady-state solution within machine precision.

\begin{table}[!ht]
\color{red}
\caption{\sf Example 8: Errors in $A$ computed with WB and non WB (NWB) schemes using parabolic, cubic, and quartic polynomials.\label{tab8}}
\begin{center}
\begin{tabular}{c| c c c c c c}\hline
\multicolumn{1}{c|}{\multirow{2}{*}{Schs.}}  &\multicolumn{2}{c}{$r=2$} &\multicolumn{2}{c}{$r=3$} &\multicolumn{2}{c}{$r=4$}\\ \cline{2-7}
   & $L^1$-error   & $L^\infty$-error  & $L^1$-error  &$L^\infty$-error  & $L^1$-error  & $L^\infty$-error\\ \hline
 \multicolumn{1}{c|}{\multirow{1}{*}{WB}}  &2.82e-18 & 3.25e-18 &1.15e-18 &1.52e-18 &1.10e-15 & 3.11e-16 \\ \hline
 \multicolumn{1}{c|}{\multirow{1}{*}{NWB}}  &4.96e-03 & 1.18e-03 &3.94e-05 &3.59e-05 &3.94e-05 &3.47e-05 \\ \hline
\end{tabular}
\end{center}
\end{table}

To further illustrate the superiority of WB methods, we introduce a small perturbation $10^{-7}e^{-40(x-1)^2}$ to the underlying steady-state solution $A(x)$ and show the differences between the numerical solutions and the underlying stationary solution at times $t=0.1$ and $t=0.4$. It can be observed that, the evolution of the perturbation is better captured as the order of the method increases, and the left-going wave generated by the initial perturbation leaves the domain at $t=0.4$.

\begin{figure}[ht!]
\color{red}
\centerline{\subfigure[$t=0.1$]{\includegraphics[trim=0.8cm 0.25cm 0.9cm 0.01cm,clip,width=4.5cm]{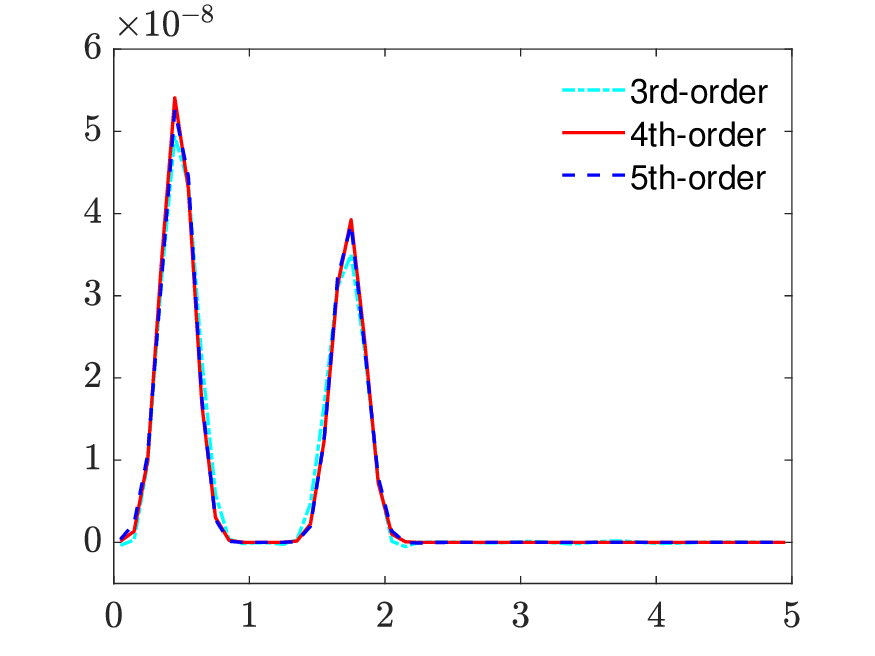}}\hspace*{0.45cm}
\subfigure[$t=0.4$]{\includegraphics[trim=0.8cm 0.25cm 0.9cm 0.01cm,clip,width=4.5cm]{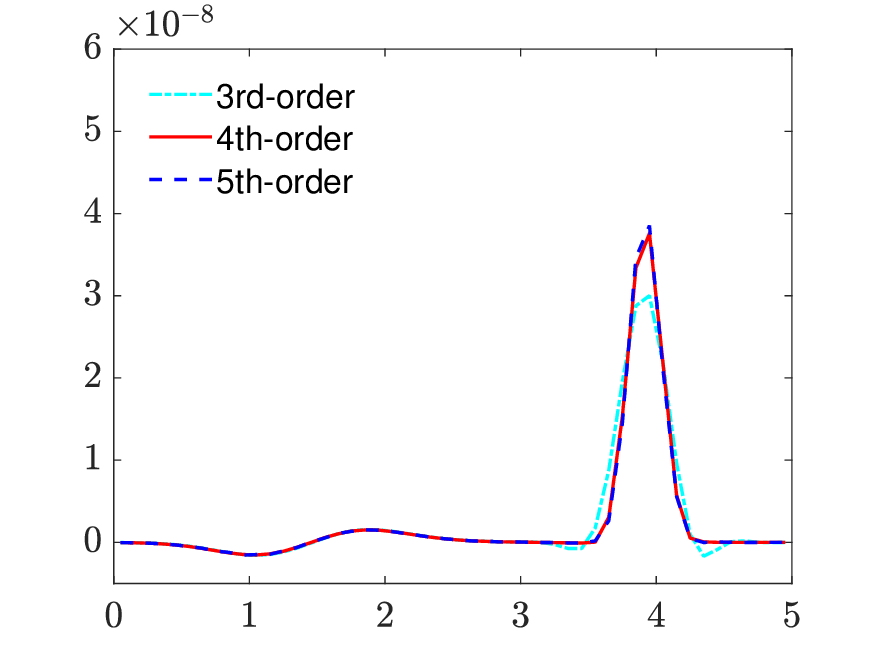}}}
\caption{\sf Example 8: plot of the perturbation, i.e., difference between the numerical solution and the initial steady-state solution. \label{Ex8_fig}}
\end{figure}

\subsection*{Example 9---Discontinuous steady-state solution in veins}
In the final example, we consider a solution linked by a stationary contact discontinuity with $u\neq0$. The initial conditions in the domain $[0,0.2]$ are given by
\begin{equation*}
  (A(x,t=0),u(x,t=0))=\left\{
  \begin{aligned}
  &(6.41356968\times 10^{-4}, 1),&&x<0.1,\\
  &(3.109988229063683\times 10^{-4},2.06224886),&&x>0.1,
  \end{aligned}\right.
\end{equation*}
with geometrical and mechanical parameters defined as
\begin{equation*}
  (A_0(x),\mathcal{K}(x),p_{\rm ext}(x))=\left\{
  \begin{aligned}
  &(6.2706\times 10^{-4},58725,9999.15),&&x<0.1,\\
  &(3.1353\times 10^{-4},587250,78001.73870735058),&&x>0.1.
  \end{aligned}\right.
\end{equation*}

We repeat the simulation as in Example 8 and first examine the WB property by computing the numerical solution up to a final time $t=1$ on a uniform mesh with $50$ cells. The obtained errors, reported in Table \ref{tab9}, show that only the WB methods capture the steady-state solution within machine accuracy, even in the presence of a non smooth profile.

\begin{table}[!ht]
\color{red}
\caption{\sf Example 9: Errors in $A$ computed with WB and non WB (NWB) schemes using parabolic, cubic, and quartic polynomials.\label{tab9}}
\begin{center}
\begin{tabular}{c| c c c c c c}\hline
\multicolumn{1}{c|}{\multirow{2}{*}{Schs.}}  &\multicolumn{2}{c}{$r=2$} &\multicolumn{2}{c}{$r=3$} &\multicolumn{2}{c}{$r=4$}\\ \cline{2-7}
   & $L^1$-error   & $L^\infty$-error  & $L^1$-error  &$L^\infty$-error  & $L^1$-error  & $L^\infty$-error\\ \hline
 \multicolumn{1}{c|}{\multirow{1}{*}{WB}}  &1.11e-20 & 1.08e-19 &1.08e-20 &1.08e-19 &1.81e-15 & 1.70e-14 \\ \hline
 \multicolumn{1}{c|}{\multirow{1}{*}{NWB}}  &5.70e-06 & 2.46e-04 &5.61e-06 &2.46e-04 &5.58e-06 &2.46e-04 \\ \hline
\end{tabular}
\end{center}
\end{table}

To further assess the performance of WB schemes, we add a small perturbation $10^{-5}e^{-20000(x-0.5)^2}$ to the steady-state solution $A(x)$ and compute the difference between the numerical solution and the underlying steady-state solution $A(x)$ at $t=0.001$ and $t=0.002$. The results, plotted in Figure \ref{Ex9_fig}, again demonstrate the ability of WB schemes to accurately capture the propagation of a small perturbation.
\begin{figure}[ht!]
\color{red}
\centerline{\subfigure[$t=0.001$]{\includegraphics[trim=0.8cm 0.25cm 0.9cm 0.01cm,clip,width=4.5cm]{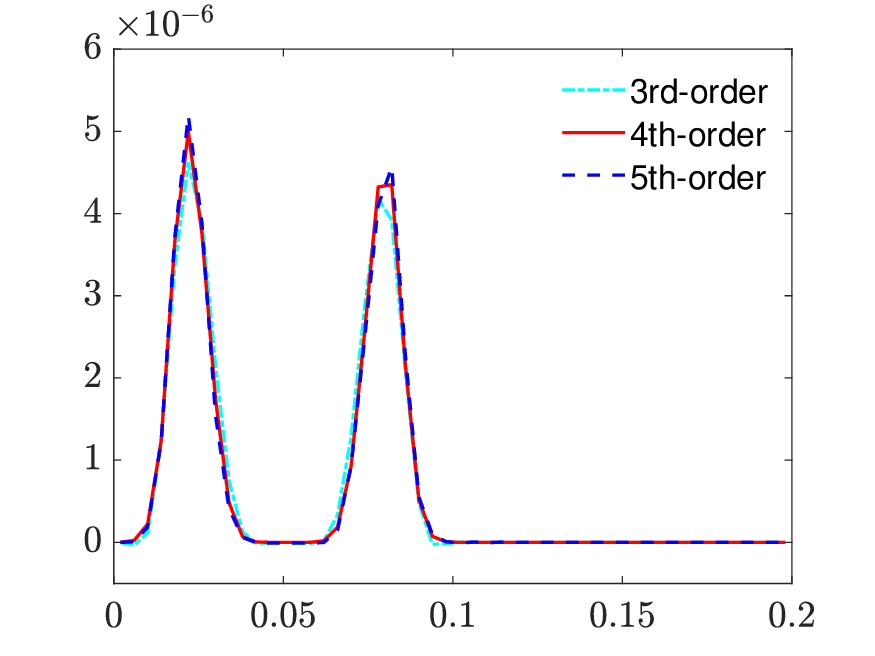}}\hspace*{0.45cm}
\subfigure[$t=0.002$]{\includegraphics[trim=0.8cm 0.25cm 0.9cm 0.01cm,clip,width=4.5cm]{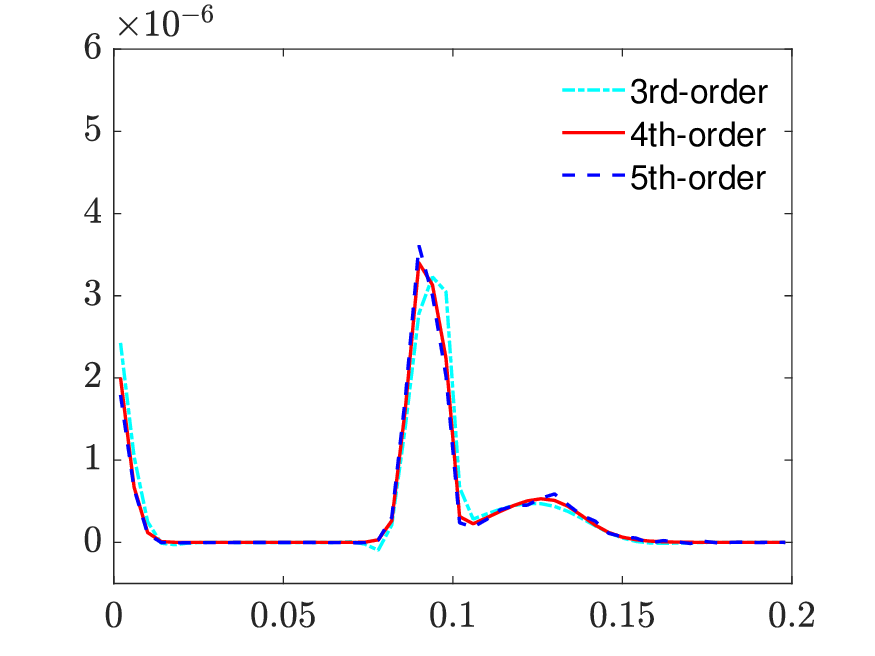}}}
\caption{\sf Example 9: plot of the perturbation, i.e., difference between the numerical solution and the initial steady-state solution. \label{Ex9_fig}}
\end{figure}
}

\section*{Acknowledgements} We would like to thank, warmly, the two anonymous referees for their insightful and constructive comments, which have significantly improved this work.

\bibliographystyle{siamplain}
\bibliography{reference}

\begin{thebibliography}{10}

\bibitem{Abgrall_camc}
{\sc R.~Abgrall}, {\em A combination of {R}esidual {D}istribution and the
  {A}ctive {F}lux formulations or a new class of schemes that can combine
  several writings of the same hyperbolic problem: application to the 1{D}
  {E}uler equation}, Commun. Appl. Math. Comput., 5 (2023), pp.~370--402.

\bibitem{AB_HOAF}
{\sc R.~Abgrall and W.~Barsukow}, {\em Extensions of {A}ctive {F}lux to
  arbitrary order of accuracy}, ESAIM: Mathematical Modelling and Numerical
  Analysis, 57 (2023), pp.~991--1027.

\bibitem{AB_FE_FV}
{\sc R.~Abgrall and W.~Barsukow}, {\em A hybrid finite element–finite volume
  method for conservation laws}, Appl. Math. Comput., 447 (2023), p.~127846.

\bibitem{AL_SW}
{\sc R.~Abgrall and Y.~Liu}, {\em A new approach for designing well-balanced
  schemes for the shallow water equations: a combination of conservative and
  primitive formulations}, SIAM J. Sci. Comput., 46 (2024), pp.~A3375--A3400.

\bibitem{Avolio}
{\sc A.~P. Avolio}, {\em Multi-branched model of the human arterial system},
  Med Biol Eng Comput, 18 (1980), pp.~709--718.

\bibitem{Barsukow_AF}
{\sc W.~Barsukow}, {\em The active flux scheme for nonlinear problems}, J. Sci.
  Comput., 86 (2021), p.~34.
\newblock Paper No. 3.

\bibitem{BB}
{\sc W.~Barsukow and J.~P. Berberich}, {\em A well-balanced {A}ctive {F}lux
  method for the shallow water equations with wetting and drying}, Commun.
  Appl. Math. Comput.,  (2023).
\newblock https://doi.org/10.1007/s42967-022-00241-x.

\bibitem{Bessems_2008}
{\sc D.~Bessems, C.~Giannopapa, M.~Rutten, and F.~Van~de Vosse}, {\em
  Experimental validation of a time-domain-based wave propagation model of bloo
  flow in viscoelastic vessels}, J. Biomech., 41 (2008), pp.~284--291.

\bibitem{BAUF}
{\sc P.~J. Blanco, G.~D. Ares, S.~A. Urquiza, and R.~A. Feij\'{o}o}, {\em On
  the effect of preload and pre-stretch on hemodynamic simulations: an
  integrative approach}, Biomech Model Mechan, 15 (2016), pp.~593--627.

\bibitem{BF13}
{\sc P.~J. Blanco and R.~A. Feij\'{o}o}, {\em A dimensionally-heterogeneous
  closed-loop model for the cardiovascular system and its applications}, Med
  Eng Phys, 35 (2013), pp.~652--667.

\bibitem{Blanco2014}
{\sc P.~J. Blanco, S.~M. Watanabe, M.~A. Passos, P.~A. Lemos, and Feij\'{o}o},
  {\em An anatomically detailed network model for one-dimensional computational
  hemodynamics}, IEEE Trans. Biomed. Eng., 62 (2014), pp.~736--753.

\bibitem{Boileau_2015}
{\sc E.~Boileau, P.~Nithiarasu, P.~Blanco, L.~M\"{u}ller, F.~Fossan,
  L.~Hellevik, P.~Donders, W.~Huberts, M.~Willemet, and J.~Alastruey}, {\em A
  benchmark study of numerical schemes for one-dimensional arterial blood flow
  modelling}, Int. J. Numer. Methods Biomed. Eng., 31 (2015), pp.~1--33.

\bibitem{BX_blood}
{\sc J.~Britton and Y.~Xing}, {\em Well-balanced discontinuous {G}alerkin
  methods for the one-dimensional blood flow through arteries model with
  man-at-eternal-rest and living-man equilibria}, Comput. Fluids, 203 (2020),
  p.~104493.

\bibitem{CKLX}
{\sc Y.~Cao, A.~Kurganov, Y.~Liu, and R.~Xin}, {\em Flux globalization based
  well-balanced path-conservative central-upwind schemes for shallow water
  models}, J. Sci. Comput, 92 (2022).
\newblock Paper No. 69, 31 pp.

\bibitem{CK23_blood}
{\sc S.~Chu and A.~Kurganov}, {\em Flux globalization based well-balanced
  central-upwind scheme for one-dimensional blood flow models}, Calcolo, 60
  (2023), p.~2.

\bibitem{CDL}
{\sc S.~Clain, S.~Diot, and R.~Loub\`{e}re}, {\em A high-order finite volume
  method for systems of conservation laws---multi-dimensional optimal order
  detection ({MOOD})}, J. Comput. Phys., 230 (2011), pp.~4028--4050.

\bibitem{DL}
{\sc O.~Delestre and P.-Y. Lagr\'{e}e}, {\em A 'well-balanced' finite volume
  scheme for blood flow simulation}, Int. J. Numer. Methods Fluids, 72 (2013),
  pp.~177--205.

\bibitem{ER_AF1}
{\sc T.~A. Eyman and P.~L. Roe}, {\em Active flux}, in 49th AIAA Aerospace
  Science Meeting, 2011.

\bibitem{ER_AF2}
{\sc T.~A. Eyman and P.~L. Roe}, {\em Active flux for systems}, in 20th AIAA
  Computational Fluid Dynamics Conference, 2011.

\bibitem{FGNQ0}
{\sc L.~Formaggia, J.~Gerbeau, F.~Nobile, and A.~Quarteroni}, {\em On the
  coupling of 3{D} and 1{D} navier-{S}tokes equations for flow problems in
  compliant vessels}, Comput. Methods Appl. Mech. Engrg., 191 (2001),
  pp.~561--582.

\bibitem{FGNQ}
{\sc L.~Formaggia, J.~F. Gerbeau, F.~Nobile, and A.~Quarteroni}, {\em Numerical
  treatment of defective boundary conditions for the navier–{S}tokes
  equations}, SIAM J. Numer. Anal., 40 (2002), pp.~376--401.

\bibitem{Formaggia03}
{\sc L.~Formaggia, D.~Lamponi, and A.~Quarteroni}, {\em One-dimensional models
  for blood flow in arteries}, J. Eng. Math., 47 (2003), pp.~251--276.

\bibitem{GDFL}
{\sc A.~R. Ghigo, O.~Delestre, J.-M. Fullana, and P.-Y. Lagr\'{e}}, {\em
  Low-{S}hapiro hydrostatic reconstruction technique for blood flow simulation
  in large arteries with varying geometrical and mechanical properties}, J.
  Comput. Phys., 331 (2017), pp.~108--136.

\bibitem{GBLT}
{\sc B.~Ghitti, C.~Berthon, M.~H. Le, and E.~F. Toro}, {\em A fully
  well-balanced scheme for the 1{D} blood flow equations with friction source
  term}, J. Comput. Phys., 421 (2020), p.~109750.

\bibitem{Grinberg2011}
{\sc L.~Grinberg, E.~Cheever, T.~Anor, J.~Madsen, and G.~Karniadakis}, {\em
  Modeling blood flow circulation in intracrnial arterial networks: {A}
  comparative 3{D}/1{D} simulation study}, Ann. Biomed. Eng., 39 (2011),
  pp.~297--309.

\bibitem{HKS}
{\sc C.~Helzel, D.~Kerkmann, and L.~Scandurra}, {\em A new {ADER} method
  inspired by the active flux method}, J. Sci. Comput., 80 (2019), pp.~35--61.

\bibitem{Jordi_2011}
{\sc A.~Jordi, W.~K. Ashraf, S.~M. Koen, S.~Patrick, J.~S. Spencer, R.~V.
  Pascal, H.~P. Kim, and P.~Joaquim}, {\em Pulse wave propagation in a model
  human arterial network: {A}ssessment of 1-{D} visco-elastic simulations
  against in vitro measurements}, J. Biomech., 44 (2011), pp.~2250--2258.

\bibitem{KLX}
{\sc A.~Kurganov, Y.~Liu, and R.~Xin}, {\em Well-balanced path-conservative
  central-upwind schemes based on flux globalization}, J. Comput. Phys., 474
  (2023), p.~111773.
\newblock 32 pp.

\bibitem{LDY}
{\sc G.~Li, O.~Delestre, and L.~Yuan}, {\em Well-balanced discontinuous
  {G}alerkin method and finite volume {WENO} scheme based on hydrostatic
  reconstruction for blood flow model in arteries}, Int. J. Numer. Methods
  Fluids, 86 (2018), pp.~491--508.

\bibitem{Matthys_2007}
{\sc K.~Matthys, J.~Alastruey, J.~Peir\'{o}, A.~Khir, P.~Segers, P.~Verdonck,
  K.~Parker, and S.~Sherwin}, {\em Pulse wave propagation in a model human
  arterial network: {A}ssessment of 1-{D} numerical simulations against in
  vitro measurements}, J. Biomech., 40 (2007), pp.~3476--3486.

\bibitem{MPT}
{\sc L.~O. M\"{u}ller, C.~Par\'{e}s, and E.~F. Toro}, {\em Well-balanced
  high-order numerical schemes for one-dimensional blood flow in vessels with
  varying mechanical properties}, J. Comput. Phys., 242 (2013), pp.~53--85.

\bibitem{Muller_2013HO}
{\sc L.~O. M\"{u}ller and E.~F. Toro}, {\em Well-balanced high-order solver for
  blood flow in networks of vessels with variable properties}, Int. J. Numer.
  Meth. Bio., 29 (2013), pp.~1388--1411.

\bibitem{MG15}
{\sc J.~Murillo and P.~Garc\'{i}a-Navarro}, {\em A {R}oe type energy balanced
  solver for 1d arterial blood flow and transport}, Comput. Fluids, 117 (2015),
  pp.~149--167.

\bibitem{OPKPNL}
{\sc M.~S. Olufsen, C.~S. Peskin, W.~Y. Kim, E.~M. Pedersen, A.~Nadim, and
  J.~Larsen}, {\em Numerical simulation and experimental validation of blood
  flow in arteries with structured-tree outflow conditions}, Ann Biomed Eng, 28
  (2000), pp.~1281--1299.

\bibitem{OA_blood}
{\sc M.~F. O'Rourke and A.~P. Avolio}, {\em Pulsatile flow and pressure in
  human systemic arteries: studies in man and in a multibranched model of the
  human systemic arterial tree}, Circ. Res., 46 (1980), pp.~363--372.

\bibitem{PJ}
{\sc K.~H. Parker and C.~J.~H. Jones}, {\em Forward and backward running waves
  in the arteries: analysis using the method of characteristics}, J. Biomech.
  Eng., 112 (1990), pp.~322--326.

\bibitem{PMTP}
{\sc E.~Pimentel-Garc\'{i}a, L.~O. M\"{u}ller, E.~F. Toro, and Par\'{e}s}, {\em
  High-order fully well-balanced numerical methods for one-dimensional blood
  flow with discontinuous properties}, J. Comput. Phys., 475 (2023), p.~111869.

\bibitem{Pontrelli}
{\sc G.~Pontrelli}, {\em A mathematical model of flow in a liquid-filled
  visco-elastic tube}, Med Biol Eng Comput, 40 (2002), pp.~550--556.

\bibitem{Pontrelli2}
{\sc G.~Pontrelli}, {\em Nonlinear pulse propagation in blood flow problems},
  (2002), pp.~630--635.
\newblock In: Anile AM, Capasso V, Greco A, editors. Progress in Industrial
  Mathematics at ECMI 2000. Berlin, Heidelberg: Springer Berlin Heidelberg.

\bibitem{AQAU}
{\sc C.~Puelz, S.~\v{C}ani\'{c}, B.~Rivi\`{e}re, and C.~Rusin}, {\em Comparison
  of reduced models for blood flow using {R}unge--{K}utta discontinuous
  {G}alerkin methods}, Appl. Numer. Math., 115 (2017), pp.~114--141.

\bibitem{QMV}
{\sc A.~Quarteroni, A.~Manzoni, and C.~Vergara}, {\em The cardiovascular
  system: mathematical modelling, numerical algorithms and clinical
  applications}, Acta Numerica, 26 (2017), pp.~365--590.

\bibitem{QTV}
{\sc A.~Quarteroni, M.~Tuveri, and A.~Veneziani}, {\em Computational vascular
  fluid dynamics: problems, models and methods}, Comput. Vis. Sci., 2 (2000),
  pp.~163--197.

\bibitem{Reymond_2009}
{\sc P.~Reymond, F.~Merenda, F.~Perren, D.~R\"{u}fenacht, and N.~Stergiopulos},
  {\em Validation of a one-dimensional model of the systemic arterial tree},
  American Journal of Physiology Heart and Circulatory Physiology, 297 (2009),
  pp.~H208--H222.

\bibitem{SFPF2}
{\sc S.~J. Sherwin, L.~Formaggia, J.~Peir\'{o}, and V.~Franke}, {\em
  Computational modelling of 1{D} blood flow with variable mechanical
  properties and its application to the simulation of wave propagation in the
  human arterial system}, Internat. J. Numer. Methods Fluids, 43 (2003),
  pp.~673--700.

\bibitem{SFPP}
{\sc S.~J. Sherwin, V.~Franke, J.~Peir\'{o}, and K.~Parker}, {\em
  One-dimensional modelling of a vascular network in space-time variables}, J.
  Engrg. Math., 47 (2003), pp.~217--250.

\bibitem{SLH}
{\sc Y.~Shi, P.~Lawford, and R.~Hose}, {\em Review of {Z}ero-{D} and 1-{D}
  models of blood flow in the cardiovascular system}, Biomed. Eng. Online, 10
  (2011), p.~33.

\bibitem{Toro16}
{\sc E.~F. Toro}, {\em Brain venous haemodynamics, neurological diseases and
  mathematical modelling. {A} review}, Appl. Math. Comput., 272 (2016),
  pp.~542--579.

\bibitem{Toro13}
{\sc E.~F. Toro and A.~Siviglia}, {\em Flow in collapsible tubes with
  discontinuous mechanical properties: mathematical model and exact solutions},
  Commun. Comput. Phys., 13 (2013), pp.~361--385.

\bibitem{Veiga_HOWB}
{\sc M.~Veiga, D.~Velasco-Romero, R.~Abgrall, and R.~Teyssier}, {\em Capturing
  near-equilibrium solutions: {A} comparison between high-order discontinuous
  {G}alerkin methods and well-balanced schemes}, Commun. Comput. Phys., 26
  (2019), pp.~1--34.

\bibitem{Vilar}
{\sc F.~Vilar}, {\em A posterior correction of high-order discontinuous
  {G}alerkin scheme through subcell finite volume formulation and flux
  reconstruction}, J. Comput. Phys., 387 (2019), pp.~245--279.

\bibitem{WLD}
{\sc Z.~Wang, G.~Li, and O.~Delestre}, {\em Well-balanced finite difference
  weighted essentially non-oscillatory schemes for the blood flow model},
  Internat. J. Numer. Methods Fluids, 82 (2016), pp.~607--622.

\bibitem{Xiao_2014}
{\sc N.~Xiao, J.~Alastruey, and A.~Figueroa}, {\em A systematic comparison
  between 1-{D} and 3-{D} hemodynamics in compliant arterial models}, Int. J.
  Numer. Methods Biomed. Eng., 30 (2014), pp.~204--231.

\bibitem{XS14}
{\sc Y.~Xing and C.-W. Shu}, {\em A survey of high order schemes for the
  shallow water equations}, J. Math. Study, 47 (2014), pp.~221--249.

\bibitem{Xu2024a}
{\sc Z.~Xu and C.-W. Shu}, {\em A high-order well-balanced discontinuous
  {G}alerkin method for hyperbolic balance laws based on the {G}auss-{L}obatto
  quadrature rules}, J. Sci. Comput., 101 (2024).
\newblock Paper No. 39.

\bibitem{Xu2024}
{\sc Z.~Xu and C.-W. Shu}, {\em A high-order well-balanced alternative finite
  difference {WENO} ({A}-{WENO}) method with the exact conservation property
  for systems of hyperbolic balance laws}, Adv. Water Resour., 196 (2025),
  p.~104898.

\end{thebibliography}

\end{document}